\newtheorem{theo}{Theorem}
\newtheorem{lemma}{Lemma}
\newtheorem{prop}{Proposition}
\newtheorem{corollary}{Corollary}
\theoremstyle{definition}
\newtheorem{rem}{Remark}
\newtheorem{defi}{Definition}
\numberwithin{equation}{section}
\newcommand{\ho}{\mathrm{hom}}
\newcommand{\per}{\mathrm{per}}
\newcommand{\R}{\mathbb{R}}
\newcommand{\Z}{\mathbb{Z}}
\newcommand{\N}{\mathbb{N}}
\newcommand{\ee}{\mathbf{e}}
\newcommand{\mf}{\mathfrak}
\newcommand{\mb}{\mathbb}
\newcommand{\la}{\langle}
\newcommand{\ra}{\rangle}
\newcommand{\Ah}{A_\mathrm{hom}}
\newcommand{\cE}{\mathcal{E}}
\renewcommand{\o}{\omega}
\renewcommand{\P}{\mathbb{P}}
\renewcommand{\L}{\mathcal{L}}
\newcommand{\D}{\operatorname{D}}
\def\d{{\mathrm{d}}}
\mathchardef\emptyset="001F
\newcommand{\dig}[1]{\mathrm{diag}\left[ #1\right]}
\newcommand{\var}[1]{\mathrm{var}\left[#1\right]}
\newcommand{\expec}[1]{\left\langle #1 \right\rangle}
\newcommand{\step}[1]{\noindent \textit{Step} #1.}
\title{Spectral measure and approximation of homogenized coefficients}
\author[A. Gloria \& J.-C. Mourrat]{Antoine Gloria \& Jean-Christophe Mourrat}
\date{\today}
\address[Antoine Gloria]{Projet SIMPAF, INRIA Lille-Nord Europe, France}
\email{antoine.gloria@inria.fr}
\address[Jean-Christophe Mourrat]{Centre de Math\'ematiques et Informatique (CMI), Universit\'e de Provence, France}
\email{mourrat@cmi.univ-mrs.fr}
\begin{document}
\maketitle
%%----------------------------------------------------------------------------------------------------------------------

\begin{center}
\begin{minipage}{13cm}
\small{
\noindent {\bf Abstract.} 
This article deals with the numerical approximation of effective coefficients in stochastic homogenization
of discrete  linear elliptic equations.
The originality of this work is the use of a well-known \emph{abstract} spectral representation formula to design 
and analyze effective and \emph{computable} approximations of the homogenized coefficients.
In particular, we show that information on the edge of the spectrum of the generator of the environment
viewed by the particle projected on the local drift yields bounds on the approximation error, and conversely.
Combined with results by Otto and the first author in low dimension, and results by the second author in high dimension,
this allows us to prove that for any dimension $d\geq 2$, there exists an explicit numerical strategy to approximate
homogenized coefficients which converges at the rate of the central limit theorem.

\vspace{10pt}
\noindent {\bf Keywords:} 
stochastic homogenization, spectral theory, ergodic theory, numerical method.

\vspace{6pt}
\noindent {\bf 2010 Mathematics Subject Classification:} 35B27, 37A30, 65C50, 65N99.}
% id : homogenization, limit theorems

\end{minipage}
\end{center}

\bigskip

%%%%%%%%%

\section{Introduction}

\noindent We consider a discrete elliptic operator $-\nabla^*\cdot A\nabla$, where $\nabla^*\cdot$ and $\nabla$
are the discrete backward divergence and forward gradient, respectively. For all $z\in \Z^d$, $A(z)$ is the diagonal
matrix whose entries are the conductances $\omega_{z,z+\ee_i}$ of the edges $(z,z+\ee_i)$ starting at $z$, where $\{\ee_i\}_{i\in \{1,\dots,d\}}$
denotes the canonical basis of $\Z^d$.
The values of the conductances are random and their realizations are assumed to be independent and identically distributed.

\medskip
\noindent Provided that the conductances lie in a compact set of $\R^*_+$, standard homogenization results (see for instance \cite{Kunnemann-83}) ensure
that there exists some \emph{deterministic} matrix $A_\ho$ such that the solution operator of the
deterministic continuous differential operator $-\nabla \cdot A_\ho\nabla$ describes the large scale behavior of the solution operator
of the random discrete differential operator $-\nabla^*\cdot A\nabla$ almost surely.
As a by-product of this homogenization result, one obtains a characterization of the homogenized coefficients $A_\ho$: it is shown
that for every direction $\xi \in \R^d$, there exists a unique scalar field $\phi$ such that $\nabla \phi$ is stationary, 
$\expec{\nabla \phi}=0$ (vanishing expectation), which solves the corrector equation 
\begin{equation}\label{eq:intro-corr-eq}
-\nabla^*\cdot A(\xi+\nabla \phi)\,=\,0 \qquad \text{ in }\Z^d,
\end{equation}
and normalized by $\phi(0)=0$.
With this corrector, the homogenized coefficients $A_\ho$ can be characterized as
\begin{equation}\label{eq:intro-hom-coef}
\xi\cdot A_\ho \xi\,=\,\expec{(\xi+\nabla \phi)\cdot A(\xi+\nabla \phi)}.
\end{equation}

\medskip
\noindent From the practical point of view, \eqref{eq:intro-hom-coef} is not of immediate interest
since the corrector equation \eqref{eq:intro-corr-eq} has to be solved
\begin{itemize}
\item for every realization of the coefficients $\omega$,
\item on the whole $\Z^d$.
\end{itemize}
Ergodicity allows one to replace the expectation by a spatial average (on increasing domains) almost surely.
To approximate $\phi$, one usually uses $\phi_R$, the unique solution to equation \eqref{eq:intro-corr-eq} 
on some large but finite domain $Q_R=(-R/2,R/2)^d$, completed by say periodic or homogeneous Dirichlet boundary conditions.
Yet, the comparison of $\nabla \phi_R$ to $\nabla \phi$ is not obvious since $\nabla \phi_R$ and $\nabla \phi$ are not ``jointly stationary''.
In order to avoid this difficulty, Otto and the first author have used a somewhat different strategy.
We have proceeded in two steps: we first replace $\phi$ by its standard regularization $\phi_\mu$, unique stationary solution to
the modified corrector equation
\begin{equation*}
\mu\phi_\mu-\nabla^*\cdot A (\xi+\nabla \phi_\mu)\,=\,0 \qquad \text{in }\Z^d
\end{equation*}
for some small $\mu>0$.
Then, $\phi_\mu$ is replaced by $\phi_{\mu,R}$, the unique weak solution to
\begin{equation*}
\left\{
\begin{array}{rcl}
\mu\phi_{\mu,R}-\nabla^*\cdot A(\xi+\nabla \phi_{\mu,R})&=&0 \qquad \text{in }Q_R\cap \Z^d,\\
\phi_{\mu,R}&=&0 \qquad \text{on }\Z^d \setminus Q_R.
\end{array}
\right.
\end{equation*}
The advantages are twofold:
\begin{itemize}
\item $\nabla \phi$ and $\nabla \phi_\mu$ are jointly stationary, which is of great help for the analysis,
\item $\phi_\mu$ is accurately approximated by $\phi_{\mu,R}$ on domains of the form
$Q_L=(-L/2,L/2)^d$ provided that $(R-L)\sqrt{\mu}\gg 1$, due to the
exponential decay of the Green's function associated with $\mu-\nabla^* \cdot A\nabla$ in $\Z^d$
(see \cite{Gloria-10}), so that we only focus on $\phi_\mu$ and not $\phi_{\mu,R}$ from now on.
\end{itemize}

\medskip
\noindent In particular, we may approximate $A_\ho$ by the following average
\begin{equation*}
\xi\cdot A_{\mu,1,L}\xi \,:=\,\int_{Q_L}(\xi+\nabla \phi_\mu)\cdot A (\xi+\nabla \phi_\mu)\chi_L(x)\d x,
\end{equation*}
where $\chi_L$ is a smooth mask supported on $Q_L$ and of mass one.
In \cite{Gloria-Otto-09}, we have proved that the $L^2$-norm of the error in probability takes the form
\begin{equation}\label{intro:error-estim}
\expec{\big(\xi\cdot A_{\mu,1,L}\xi-\xi\cdot A_\ho\xi\big)^2} \,=\,\var{\xi\cdot A_{\mu,1,L}\xi}+\big( \xi\cdot (A_{\mu,1}-A_\ho)\xi\big)^2,
\end{equation}
where 
\begin{equation*}
\xi\cdot A_{\mu,1}\xi \,:=\,\expec{(\xi+\nabla \phi_\mu)\cdot A (\xi+\nabla \phi_\mu)}.
\end{equation*}
The first term of the r.~h.~s. of \eqref{intro:error-estim} is stochastic in nature and corresponds to the variance of the approximation of the homogenized coefficients, 
whereas the second
term is a systematic deterministic error related to the fact that we have modified the corrector equation.

\medskip
\noindent
In \cite{Gloria-Otto-09}, we have proved that the stochastic error depends on the dimension and has the scaling of the central limit theorem
(in other words the energy density of the corrector behaves as if it were independent from site to site): there exists $q$ depending only on the ellipticity constants $\alpha,\beta$ such that
\begin{equation}\label{eq:intro-13}
\var{A_{\mu,1,L}}^{1/2} \,\lesssim \,\left|
\begin{array}{rcl}
 L^{-1} \ln^q \mu &\text{if}&d=2 ,\\
 L^{-d/2} &\text{if}&d>2. 
\end{array}
\right. 
\end{equation}
The systematic error has been identified in \cite{Gloria-Otto-09b}. It also depends on the dimension for $d<5$, but saturates at $d=5$: 
there exists $q$ depending only on the ellipticity constants $\alpha,\beta$
such that
\begin{equation}
\label{eq:intro-12a}
|A_{\mu,1}-A_\ho| \,\lesssim \,\left|
\begin{array}{rcl}
\mu \ln^q \mu^{-1}&\text{if}&d=2 ,\\
\mu^{3/2}&\text{if}&d=3 , \\
\mu^2\ln \mu^{-1} &\text{if}&d=4, \\
\mu^2 &\text{if}&d>4.
\end{array}
\right. 
\end{equation}
These two estimates are optimal (up to some possible logarithmic corrections for $d=2$).
In order to use $\phi_{\mu,R}$ as a proxy for $\phi_\mu$ on $Q_L$, at first order we may take $\mu^{-1} \sim L^2 \sim R^2$.
Hence, the stochastic error dominates up to $d=8$, so that the convergence rate of the numerical strategy is optimal (it coincides with the central limit theorem scaling, which is an upper bound):
\begin{equation*}
\expec{\big(\xi\cdot A_{L^{-2},1,L}\xi-\xi\cdot A_\ho\xi\big)^2}^{1/2} \,\lesssim \,
\left|
\begin{array}{rcl}
L^{-1}\ln^qL &\mbox{if}&d=2,\\
L^{-d/2} &\mbox{if}&2< d\leq 8.
\end{array}
\right.
\end{equation*}
Yet, for $d>8$, the systematic error dominates and the numerical strategy is not optimal any longer:
\begin{equation*}
\expec{\big(\xi\cdot A_{L^{-2},1,L}\xi-\xi\cdot A_\ho\xi\big)^2}^{1/2} \,\lesssim \,
\begin{array}{rcl}
L^{-4}&\mbox{if}&8<d.
\end{array}
\end{equation*}

\bigskip
\noindent The aim of this paper is to introduce new formulas for the approximation of $A_\ho$ using the modified corrector $\phi_\mu$ (possibly with
different $\mu$'s) in order to reduce the \emph{systematic error}. 
In early and seminal papers on stochastic homogenization (for instance \cite{Papanicolaou-Varadhan-79} and \cite{Kipnis-Varadhan-86}),
spectral analysis has been used to prove uniqueness of correctors, and devise a spectral representation formula for $A_\ho$.
In particular, denoting by $-\mathcal{L}$ the generator of the environment viewed by the particle, and by $e_{\mathfrak{d}}$
its spectral measure projected on the local drift $\mathfrak{d}=\nabla^* \cdot A\xi$ (see Section~\ref{sec:sto}), we have
$$
\xi\cdot A_\ho\xi\,=\,\expec{\xi\cdot A\xi}-\int_{\R^+}\frac{1}{\lambda}\d e_{\mathfrak{d}}(\lambda).
$$
As noticed by the second author in \cite{Mourrat-10}, $A_{\mu,1}$ can also be written in terms 
of the spectral measure $e_{\mathfrak{d}}$ (see Section~\ref{sec:perio} for details):
\begin{eqnarray*}
%\xi\cdot \tilde A_{\mu,1} \xi &=&\expec{\xi\cdot A\xi}-\int_{\R^+}\frac{\lambda+\mu}{(\mu+\lambda)^2} \d e_{\mathfrak{d}}(\lambda) \\
%&=&   \xi\cdot A_\ho\xi + (\mu+\mu^2) \int_{\R^+}\frac{1}{\lambda(\mu+\lambda)^2} \d e_{\mathfrak{d}}(\lambda).
\xi\cdot A_{\mu,1} \xi &=&\expec{\xi\cdot A\xi}- \int_{\R^+}\frac{\lambda+2\mu}{(\mu+\lambda)^2} \d e_{\mathfrak{d}}(\lambda) \\
&=& \xi\cdot A_\ho\xi + \mu^2 \int_{\R^+}\frac{1}{\lambda(\mu+\lambda)^2} \d e_{\mathfrak{d}}(\lambda).
\end{eqnarray*}
%
%so that it becomes clear that the approximation $A_\mu$ is better than $\tilde A_\mu$.
The key idea of the present paper is to use this spectral representation in order to design approximations of $A_\ho$ at an abstract level first, and then go
back to physical space and obtain formulas in terms of the modified correctors $\phi_\mu$.
We shall actually introduce, for every integer $k \ge 1$, an approximation $A_{\mu,k}$ of $A_\ho$ defined in terms of $\phi_\mu,\dots,\phi_{2^{k-1}\mu}$,
and prove that, up to logarithmic corrections, the difference $|A_\ho - A_{\mu,k}|$ is bounded in our discrete stochastic setting by
\begin{equation}\label{eq:intro-14}
\left|
\begin{array}{rcl}
\mu^{\min\{2k,d/2\}}&\text{if}&d\le 6 , \\
\mu^{\min\{2k,\max(3,d/2-3)\} } &\text{if}&d > 6 ,
\end{array}
\right.
\end{equation}
(see Theorem~\ref{theo:main-sto} for a more precise statement).
The systematic error associated with the new approximations
% can thus be made of arbitrarily high order in the periodic case, and it 
can be made of a higher order than (\ref{eq:intro-12a}) as soon as $d \geq 4$. 
The proof of these estimates relies on the observation that the systematic error is controlled by the edge of the spectrum $e_{\mathfrak{d}}((0,\mu))$.
In turn, the systematic error also controls the edge of the spectrum (see Theorem~\ref{theo:sto-2} for a precise statement), so that estimating
the systematic error is equivalent to quantifying $e_{\mathfrak{d}}((0,\mu))$.

\medskip
\noindent As we shall also prove, the variance estimate \eqref{eq:intro-13} is unchanged if $A_{\mu,1}$ is replaced by $A_{\mu,k}$ for all $k \geq 1$.
In particular, if we keep $\mu^{-1} \sim L^2$, we obtain a numerical strategy whose convergence rate is optimal with respect to the central limit theorem scaling in the stochastic case, for any $d \geq 2$.
This improves and completes for $d > 8$ the series of papers \cite{Gloria-Otto-09,Gloria-Otto-09b,Gloria-10} 
by Otto and the first author on quantitative estimates in stochastic homogenization of discrete elliptic equations.
In turn, we also obtain ``optimal'' bounds on $e_{\mathfrak{d}}((0,\mu))$ up to $d=6$ (see Theorem~\ref{theo:exponents}), 
thus improving the corresponding results of the second author in \cite{Mourrat-10}.

\medskip
\noindent
Note however that the bounds \eqref{eq:intro-14} are not yet optimal: the systematic error is expected to behave as $\mu^{\min\{2k,d/2\}}$ in
any dimension (up to logarithmic corrections), see also Remark~\ref{rem:edge} for the equivalent statement in terms of the
edge of the spectrum.
We wish to address this issue in a future work.

\bigskip
\noindent
The article is organized as follows.
Although the main focus of this work is on stochastic homogenization of discrete elliptic equations, we first describe
the strategy on the elementary case of periodic homogenization of continuous elliptic equations in Section~\ref{sec:perio}
(this new strategy may indeed be valuable to numerical homogenization methods, see in particular \cite{Gloria-09} for related issues).
We introduce the spectral decomposition formula for the homogenized coefficients.
The binomial formula then provides with natural approximations of the homogenized coefficients in terms of the 
associated spectral measure. We conclude the section by rewriting these formulas in physical space using 
solutions to the modified corrector equation, which yields new \emph{computable} approximations of the homogenized coefficients.
In particular, this generalizes the method introduced in \cite{Gloria-09} and makes the systematic error decay arbitrarily fast.
Some numerical tests displayed in Appendix~\ref{append-perio} illustrate the sharpness of the analysis.

\medskip

\noindent
We turn to the core of this article in Section~\ref{sec:sto}: the stochastic homogenization of discrete elliptic equations.
We first recall the spectral decomposition of the generator of the environment viewed by the particle. 
The algebra is the same as in the continuous periodic case, so that the formulas
we obtain in Section~\ref{sec:perio} adapt mutatis mutandis to the discrete stochastic case.
Yet, the error analysis is more subtle. We show that the asymptotic behavior of the systematic error is 
driven by the behavior of the edge of the spectrum of the generator. Using results of \cite{Mourrat-10} in high dimension, and results in the
spirit of \cite{Gloria-Otto-09b} (see Lemma~\ref{lem:app} and Appendix~\ref{append})
in low dimension, we obtain estimates on the edge of this spectrum, which show that the systematic error is effectively reduced 
in high dimensions (although our bounds are not optimal when $d >6$). We then note that the variance estimates derived in \cite{Gloria-Otto-09} also hold for these
approximations, thus concluding the error analysis of the numerical strategy. 

\bigskip
\noindent We will make use of the following notation:
\begin{itemize}
\item $d\geq 2$ is the dimension;
\item In the discrete case, $\int_{\Z^d} \d x$ denotes the sum over $x\in\Z^d$, and $\int_{D}\d x$ denotes the sum over $x\in\Z^d$ such that $x\in D$, $D$ open subset of $\R^d$;
\item $\expec{\cdot}$ is the average in the periodic case, and the expectation in the stochastic case;
\item $\var{\cdot}$ is the variance in the stochastic case;
\item $\lesssim$ and $\gtrsim$ stand for $\leq$ and $\geq$ up to a multiplicative constant which only depends on the dimension $d$ and the constants $\alpha,\beta$ (the
ellipticity constants of the matrix $A$, see Definitions~\ref{defi:A-per} and \ref{defi:envi}) if not otherwise stated; 
\item when both $\lesssim$ and $\gtrsim$ hold, we simply write $\sim$;
\item we use $\gg $ instead of $\gtrsim$ when the multiplicative constant is (much) larger than $1$;
\item $(\ee_1,\dots,\ee_d)$ denotes the canonical basis of $\R^d$.
\end{itemize}

\section{The continuous periodic case}\label{sec:perio}

\begin{defi}\label{defi:A-per}
Let $A:\R^d\to \mathcal{M}_d(\R)$ be a $Q$-periodic symmetric diffusion matrix which is uniformly
continuous and coercive with constants $\beta \geq \alpha >0$: for 
almost all $x\in Q$ and all $\xi\in \R^d$, $|A\xi|\leq \beta |\xi|$ and $\xi\cdot A\xi \geq \alpha|\xi|^2$.
The associated homogenized matrix $A_\ho$ is characterized for all $\xi\in \R^d$ by
\begin{eqnarray*}
\xi\cdot A_\ho \xi &=&\expec{(\xi+\nabla \phi)\cdot A(\xi+\nabla \phi)}\\
%& = &\frac{1}{N^d}\int_{Q}(\xi+\nabla \phi)\cdot A(\xi+\nabla \phi)\d x,
& = &\int_{Q}(\xi+\nabla \phi)\cdot A(\xi+\nabla \phi)\d x,
\end{eqnarray*}
where $\expec{\cdot }$ denotes the average on the periodic cell $Q$, and $\phi$ is the unique
$Q$-periodic weak solution to 
\begin{equation}\label{eq:corr}
-\nabla \cdot A (\xi+\nabla \phi) \,=\,0
\end{equation}
with zero average $\expec{\phi}=0$.
\end{defi}

\medskip
\noindent Let us define 
%$\mathcal{E}:H^1_\per(Q)\times H^1_\per(Q)\to \R^+, (\psi,\chi)\mapsto \frac{1}{N^d}\int_{Q}\nabla \psi\cdot A\nabla \chi \d x$
$\mathcal{E}:H^1_\per(Q)\times H^1_\per(Q)\to \R^+, (\psi,\chi)\mapsto \int_{Q}\nabla \psi\cdot A\nabla \chi \d x$
the bilinear form associated with $A$. We call the quadratic form $\psi \to \cE(\psi,\psi)$ the \emph{Dirichlet form}. One may write the homogenized matrix as
\begin{equation}\label{eq:hom-Dir}
\xi\cdot A_\ho \xi \,=\,\expec{\xi\cdot A\xi}-\mathcal{E}(\phi,\phi).
\end{equation}
Indeed, the weak formulation of \eqref{eq:corr} implies 
\begin{equation}\label{eq:corr-wf}
\int_{Q}\nabla \phi\cdot A(\xi+\nabla \phi)\d x\,=\,0,
\end{equation}
and therefore
\begin{equation*}
\int_{Q}\xi \cdot A \nabla \phi \d x\,=\, \int_{Q}\nabla \phi\cdot A \xi \d x\,\stackrel{\eqref{eq:corr-wf}}{=}\,-\int_{Q}\nabla \phi\cdot A \nabla \phi \d x\,=\,-\mathcal{E}(\phi,\phi).
\end{equation*}
The objective of this section is to use a spectral decomposition to design approximations for %the Dirichlet form 
$\mathcal{E}(\phi,\phi)$.

\subsection{Spectral decomposition}

\begin{defi}\label{defi:decomp-spec-perio}
Let $A$ be as in Definition~\ref{defi:A-per}.
We let $\L^{-1}$ denote the inverse of the elliptic operator $\L=-\nabla \cdot A\nabla$ with periodic boundary conditions on $L^2_0(Q)=\{v\in L^2(Q)\,|\,\int_Q v(x)dx=0\}$.
It is a well-defined compact operator by generalized Poincar\'e's inequality, Riesz's, and Rellich's theorems.
\end{defi}
%  on $D(\L)=C^\infty_\per(Q)$ is unbounded, positive and essentially self-adjoint.
% Since its inverse is compact by Rellich' theorem, its spectrum $\{\lambda_i\}_{i\in \N}$ is purely discrete, and the associated eigenfunctions $\psi_i \in H^1_\per(Q)$
% form an orthonormal basis of $L^2(Q)$.
% % operator on $H^1_\per(Q)$.
% %We denote by $\{\lambda_i\}_{i\in \N}$ its spectrum and by $\{\psi_i\}_{i\in\N}$ the associated orthonormal 
% %basis of $H^1_\per(Q)$.
% \end{defi}
%
\noindent 
By Hilbert-Schmidt's theorem, 
there exist an orthonormal basis $\{\psi_i\}_{i>0}$ of $L^2_0(Q)$ and positive eigenvalues $\{\lambda_i\}_{i>0}$ (in increasing order)
such that for all $i>0$, $\L^{-1}\psi_i=\frac{1}{\lambda_i}\psi_i$.
By definition, $\psi_i\in H^1_\per(Q)$.
Setting $\psi_0\equiv 1$ and $\lambda_0=0$, one may then characterize $H^{1}_\per(Q)$ as
\begin{equation*}
H^1_\per(Q)\,=\,\left\{u=\sum_{i\in \N}\alpha_i \psi_i \,\bigg| \, \sum_{i\in\N}(1+\lambda_i)\alpha_i^2<\infty\right\}.
\end{equation*}
By Riesz's representation theorem, this also implies for the dual $H^{-1}_\per(Q)$ of $H^{1}_\per(Q)$:
\begin{equation}\label{eq:carac-H-1}
H^{-1}_\per(Q)\,=\,\left\{f=\sum_{i\in \N}\beta_i \psi_i \,\bigg| \, \sum_{i\in\N}\frac{\beta_i^2}{1+\lambda_i}<\infty\right\}.
\end{equation}
Hence, for all $f\in H^{-1}_\per(Q)$ such that $\expec{f,1}_{H^{-1}_\per,H^1_\per}=0$, the unique weak solution $u\in H^1_\per(Q)$ to 
\begin{equation*}
-\nabla \cdot A\nabla u\,=\,f
\end{equation*}
is given by
\begin{equation*}
u\,=\,\sum_{i\in\N\setminus \{0\}}\frac{\expec{f,\psi_i}_{H^{-1}_\per,H^1_\per}}{\lambda_i}\psi_i.
\end{equation*}
\medskip
\noindent For all $f\in H^{-1}_\per(Q)$ such that $\expec{f,1}_{H^{-1}_\per,H^1_\per}=0$,
we define the spectral measure $e_f$ of $\L$ projected on $f$ by
\begin{equation}\label{eq:spectral-measure}
e_f\,=\,\sum_{i\in \N} \expec{f,\psi_i}_{H^{-1}_\per,H^1_\per}\delta_{\lambda_i},
\end{equation}
where $\delta_{\lambda_i}$ is the Dirac mass on $\lambda_i$. 
The above characterizations of $H^1$ and $H^{-1}$ then allow us to give a mathematical meaning 
to the formal functional calculus
\begin{equation*}
\expec{f,\Psi(\L)(f)}_{H^{-1}_\per,H^1_\per}\,=\,\int_{\R^+}\Psi(\lambda)\d e_f(\lambda),
\end{equation*}
for every continuous function $\Psi:[0,+\infty)\to \R$ such that
$\lambda \Psi(\lambda) \,\lesssim \, 1$ as $\lambda\to \infty$.

\medskip
\noindent We are now in position to express the Dirichlet form of the corrector $\phi$
in terms of the spectral measure projected on the ``local drift'' 
\begin{equation}\label{eq:def-local-drift-per}
\mf{d}:=\nabla \cdot A\xi \in H^{-1}_\per(Q).
\end{equation}
In particular,
\begin{eqnarray}
\mathcal{E}(\phi,\phi)&=&\expec{\L\phi,\phi}_{H^{-1}_\per,H^1_\per} \nonumber \\
&=& \expec{\L\L^{-1}\mf{d},\L^{-1}\mf{d}}_{H^{-1}_\per,H^1_\per} \nonumber\\
&=&\int_{\R^+}\frac{1}{\lambda}\d e_{\mf{d}}(\lambda).\label{eq:dir-spec-1}
\end{eqnarray}

\medskip
\noindent Let us then turn to the approximation of $A_\ho$ used in \cite{Gloria-09}, that is
\begin{equation}
\label{eq:Ahom1}
\xi\cdot A_\mu\xi\,:=\,\expec{(\xi+\nabla \phi_\mu)\cdot A(\xi+\nabla \phi_\mu)},
\end{equation}
where $\phi_\mu\in H^1_\per(Q)$ is the unique weak solution to the modified corrector equation
\begin{equation}\label{eq:corr-mu}
\mu\phi_\mu-\nabla \cdot A (\xi+\nabla \phi_\mu) \,=\,0,
\end{equation}
that we more compactly write as $(\mu+\L)\phi_\mu\,=\,\mf{d}$.
In this case, the weak formulation of the equation implies
\begin{equation}
\label{eq:Ahom1weak}
\int_{Q}\nabla \phi_\mu\cdot A(\xi+\nabla \phi_\mu)\d x\,=\,-\mu \int_{Q}\phi_\mu^2\d x,
\end{equation}
so that the defining formula for $A_\mu$ turns into 
\begin{equation}\label{eq:hom-Dir-mu}
\xi\cdot A_\mu \xi \,=\,\expec{\xi\cdot A\xi}-\mathcal{E}(\phi_\mu,\phi_\mu)-2\mu \expec{\phi_\mu^2}.
\end{equation}
Proceeding as above, we rewrite the last two terms of the r.~h.~s. of \eqref{eq:hom-Dir-mu} as
\begin{eqnarray}
\mathcal{E}(\phi_\mu,\phi_\mu)&=&\expec{\L(\mu+\L)^{-1}\mf{d},(\mu+\L)^{-1}\mf{d}}_{H^{-1}_\per,H^1_\per}  \nonumber \\
&=&\expec{\mf{d},\L (\mu+\L)^{-2}\mf{d}}_{H^{-1}_\per,H^1_\per}  \nonumber \\
&=&\int_{\R^+}\frac{\lambda}{(\mu+\lambda)^2}\d e_{\mf{d}}(\lambda),\label{eq:dir-spec-2}
\end{eqnarray}
and
\begin{eqnarray}
\expec{\phi_\mu^2}&=&\expec{(\mu+\L)^{-1}\mf{d},(\mu+\L)^{-1}\mf{d}}_{L^2,L^2} \nonumber \\
&=&\expec{\mf{d},(\mu+\L)^{-2}\mf{d}}_{H^{-1}_\per,H^1_\per}  \nonumber \\
&=&\int_{\R^+}\frac{1}{(\mu+\lambda)^2}\d e_{\mf{d}}(\lambda). \label{eq:dir-spec-3}
\end{eqnarray}

\medskip
\noindent The combination of \eqref{eq:hom-Dir}, \eqref{eq:dir-spec-1}, \eqref{eq:hom-Dir-mu}, \eqref{eq:dir-spec-2} \& \eqref{eq:dir-spec-3} allows us to
express the difference between $A_\mu$ and $A_\ho$ in terms of the spectral measure of $\L$
projected on the local drift $\mf{d}$, as observed in \cite[Addendum]{Mourrat-10}:
\begin{eqnarray*}
\xi\cdot (A_\mu-A_\ho)\xi &=&\mathcal{E}(\phi,\phi)-\mathcal{E}(\phi_\mu,\phi_\mu)-2\mu \expec{\phi_\mu^2} \\
&=&\int_{\R^+}\left( \frac{1}{\lambda}- \frac{\lambda}{(\mu+\lambda)^2}-\frac{2\mu}{(\mu+\lambda)^2}\right) \d e_{\mf{d}}(\lambda) \\
&=&\int_{\R^+}\frac{\mu^2}{\lambda (\mu+\lambda)^2}\d e_{\mf{d}}(\lambda) .
\end{eqnarray*}
Not only does this identity suggest that $|A_\mu-A_\ho| \sim \mu^2$ (as proved by a different approach in \cite{Gloria-09}),
but it also gives a strategy to construct approximations of $A_\ho$ at any order.
In particular, for all $k\in \N$, we write
\begin{eqnarray*}
\xi\cdot A_\ho\xi &=& \expec{\xi\cdot A \xi}-\int_{\R^+}\frac{(\mu+\lambda)^{2k}}{\lambda(\mu+\lambda)^{2k}}\d e_{\mf{d}}(\lambda)\\
&=&\expec{\xi\cdot A \xi}-\int_{\R^+}\frac{(\mu+\lambda)^{2k}-\mu^{2k}}{\lambda(\mu+\lambda)^{2k}} \d e_{\mf{d}}(\lambda)-\int_{\R^+}\frac{\mu^{2k}}{\lambda(\mu+\lambda)^{2k}} \d e_{\mf{d}}(\lambda)
\end{eqnarray*}
and set 
\begin{eqnarray}
\xi\cdot \tilde A_{\mu,k}\xi&:=&\expec{\xi\cdot A\xi}-\int_{\R^+}\frac{(\mu+\lambda)^{2k}-\mu^{2k}}{\lambda(\mu+\lambda)^{2k}} \d e_{\mf{d}}(\lambda) \nonumber \\
&=& \expec{\xi\cdot A\xi}-\sum_{j=1}^{2k-1} \binom{2k}{j} \int_{\R^+}\frac{\mu^j \lambda^{2k-1-j} }{(\mu+\lambda)^{2k}} \d e_{\mf{d}}(\lambda) \label{eq:approx-mu-k}.
\end{eqnarray}
Note that the only operator which has to be inverted to compute $\tilde A_{\mu,k}$ is indeed $\mu+\L$, and not $\L$, as desired.
In addition, this definition implies
\begin{eqnarray*}
\xi\cdot (\tilde A_{\mu,k}-A_\ho)\xi &=&\int_{\R^+}\frac{\mu^{2k}}{\lambda(\mu+\lambda)^{2k}} \d e_{\mf{d}}(\lambda),
\end{eqnarray*}
which suggests that the error is now of order $\mu^{2k}$.

\medskip
\noindent In view of formula \eqref{eq:approx-mu-k}, the effective computation of $(\mu+\L)^{-k}\mf{d}$ is needed in practice to obtain $\tilde A_{\mu,k}$.
This is a big handicap for the numerical method since the numerical inversion of $\mu+\L$ has to be iterated $k$ times, 
which dramatically magnifies the numerical error.
Fortunately, one may use a sligthly different approximation of $A_\ho$ which avoids this drawback, as shown in the following subsection.

\subsection{Abstract approximations}

Let us first introduce functions $\mf{d}_{\mu,k}$, which are defined as linear combinations of $\phi_{\mu},\dots,\phi_{2^{k-1}\mu}$ (and therefore easily computable) and will serve as
substitutes for $(\mu+\L)^{-k}\mf{d}$.
\begin{defi}\label{def:mfd-mu-k}
Let $A$ and $Q$, $\L$, and $\mf{d}$ be as in Definition~\ref{defi:A-per}, Definition~\ref{defi:decomp-spec-perio}, and \eqref{eq:def-local-drift-per}, respectively.
For all $\mu>0$, the sequence of functions $\mf{d}_{\mu,k}\in H^1_\per(Q)$ is defined by its first term
\begin{equation}
\mf{d}_{\mu,1}=\phi_\mu=(\mu+\L)^{-1}\mf{d}, \quad c_1=1,
\end{equation}
and by the induction rule
\begin{equation}
\mf{d}_{\mu,k+1}=c_k\mu^{-1}(\mf{d}_{\mu,k}-\mf{d}_{2\mu,k}), \quad c_{k+1}= \left(\frac{2}{c_k} + 1\right)^{-1}. \label{eq:def-mfd-rec}
\end{equation}
\end{defi}
\noindent Defined this way, the functions $\mf{d}_{\mu,k}$ satisfy the following fundamental properties:
\begin{prop}\label{prop:mfd-rec}
Let $\mf{d}_{\mu,k}$ be as in Definition~\ref{def:mfd-mu-k}, then for all $\mu>0$ and $k\geq 1$, we have
\begin{eqnarray}
\mf{d}_{\mu,k+1} &=&(\mu+\L)^{-1} \mf{d}_{2\mu,k},\label{eq:Lmfd-muL}\\
\L\mf{d}_{\mu,k+1}&=&(1+c_k) \mf{d}_{2\mu,k}-c_k \mf{d}_{\mu,k}. \label{eq:Lmfd}
\end{eqnarray}
\end{prop}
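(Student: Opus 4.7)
The plan is to prove both identities by induction on $k$, passing through the explicit product representation
\begin{equation*}
\mf{d}_{\mu,k} \,=\, \prod_{j=0}^{k-1}(2^j\mu + \L)^{-1}\mf{d},
\end{equation*}
which makes the algebraic structure transparent. This ansatz is suggested by the spectral formula $\phi_\mu = (\mu+\L)^{-1}\mf{d}$ for $k=1$ and by a direct computation of $\mf{d}_{\mu,2}$ via the resolvent identity.

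First, I would verify by a one-line induction from the recurrence $c_{k+1} = (2/c_k+1)^{-1}$ that the explicit formula $c_k = 1/(2^k - 1)$ holds for every $k\geq 1$: starting from $c_1 = 1$, one has $c_{k+1}^{-1} = 2(2^k - 1) + 1 = 2^{k+1} - 1$. Second, I would prove the product representation above by induction on $k$. The base case is the definition. For the inductive step, I would factor out the common resolvents to write
\begin{equation*}
\mf{d}_{\mu,k} - \mf{d}_{2\mu,k} \,=\, \prod_{j=1}^{k-1}(2^j\mu + \L)^{-1}\bigl[(\mu+\L)^{-1} - (2^k\mu+\L)^{-1}\bigr]\mf{d},
\end{equation*}
and apply the resolvent identity $(\mu+\L)^{-1}-(2^k\mu+\L)^{-1} = (2^k - 1)\mu(\mu+\L)^{-1}(2^k\mu+\L)^{-1}$. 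Multiplying by $c_k\mu^{-1}$ and using $c_k(2^k-1) = 1$ from the first step yields the product representation at rank $k+1$.

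With the product representation in hand, identity \eqref{eq:Lmfd-muL} follows by a trivial reindexing:
\begin{equation*}
\mf{d}_{\mu,k+1} \,=\, (\mu+\L)^{-1}\prod_{j=1}^{k}(2^j\mu+\L)^{-1}\mf{d} \,=\, (\mu+\L)^{-1}\prod_{i=0}^{k-1}(2^i(2\mu)+\L)^{-1}\mf{d} \,=\, (\mu+\L)^{-1}\mf{d}_{2\mu,k}.
\end{equation*}
Finally, identity \eqref{eq:Lmfd} follows from \eqref{eq:Lmfd-muL} by an elementary calculation: applying $\L$ to both sides of \eqref{eq:Lmfd-muL} and using $\L(\mu+\L)^{-1} = I - \mu(\mu+\L)^{-1}$ gives
\begin{equation*}
\L\mf{d}_{\mu,k+1} \,=\, \mf{d}_{2\mu,k} - \mu\mf{d}_{\mu,k+1};
\end{equation*}
substituting the defining relation $\mu\mf{d}_{\mu,k+1} = c_k(\mf{d}_{\mu,k} - \mf{d}_{2\mu,k})$ then produces $(1+c_k)\mf{d}_{2\mu,k} - c_k\mf{d}_{\mu,k}$, as desired.

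There is no real obstacle: the whole argument is a sequence of mechanical manipulations once one spots the correct product ansatz for $\mf{d}_{\mu,k}$. The only mild subtlety is checking that the constants $c_k$ produced by the recurrence are exactly those that absorb the factor $(2^k-1)\mu$ coming out of the resolvent identity, which is precisely the calibration $c_k = 1/(2^k - 1)$ verified in the first step.
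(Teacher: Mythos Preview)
Your proof is correct. Both identities are established cleanly, and the computation $c_k = 1/(2^k-1)$ together with the resolvent identity makes the induction go through without issue.

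Your route differs mildly from the paper's. The paper proves \eqref{eq:Lmfd-muL} directly by induction on $k$: at each step it applies $(\mu+\L)$ to the difference $\mf{d}_{\mu,k+1}-\mf{d}_{2\mu,k+1}$, uses the inductive hypothesis $(\mu+\L)\mf{d}_{\mu,k+1}=\mf{d}_{2\mu,k}$ and its shifted analogue, and then recognises the recursion for $c_{k+1}$ in the resulting coefficient; no explicit formula for $c_k$ and no product representation are invoked. You instead first establish the stronger intermediate statement $\mf{d}_{\mu,k}=\prod_{j=0}^{k-1}(2^j\mu+\L)^{-1}\mf{d}$ (which the paper only records afterwards as a consequence of the proposition), and \eqref{eq:Lmfd-muL} then drops out by reindexing. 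The derivation of \eqref{eq:Lmfd} from \eqref{eq:Lmfd-muL} and the defining recurrence is the same in both arguments. Your organisation has the advantage of making the product structure and the value of $c_k$ explicit from the outset; the paper's has the advantage of not needing to compute $c_k$ in closed form.
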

\begin{proof}
Identity~\eqref{eq:Lmfd} is a direct consequence of \eqref{eq:def-mfd-rec} \& \eqref{eq:Lmfd-muL}, and we
only need to prove the latter. We proceed by induction. Let us first check that it is indeed true for $k = 1$. By definition of $\mf{d}_{\mu,1}$, we have
\begin{equation}
\label{mfdmu111}
(\mu+\L) \mf{d}_{\mu,1} = \mf{d},
\end{equation}
and as a consequence,
\begin{equation}
\label{mfdmu112}
(\mu+\L) \mf{d}_{2\mu,1} = \mf{d} - \mu \ \mf{d}_{2\mu,1}.
\end{equation}
Combining (\ref{mfdmu111}) and (\ref{mfdmu112}), one obtains~:
$$
(\mu+\L) (\mf{d}_{\mu,1} - \mf{d}_{2\mu,1}) = \mu \ \mf{d}_{2\mu,1},
$$
from which it follows that $\mf{d}_{\mu,2} = (\mu+\L)^{-1} \mf{d}_{2\mu,1}$. 
Let us now assume that (\ref{eq:Lmfd-muL}) is satisfied at level $k \ge 1$. Similarly, we have
\begin{equation*}
\label{mfdmu11}
(\mu+\L) \mf{d}_{\mu,k+1} = \mf{d}_{2\mu,k},
\end{equation*}
\begin{equation*}
\label{mfdmu12}
(\mu+\L) \mf{d}_{2\mu,k+1} = \mf{d}_{4\mu,k} - \mu \ \mf{d}_{2\mu,k+1}.
\end{equation*}
Using these equalities, together with the definition (\ref{eq:def-mfd-rec}) of $\mf{d}_{\mu,k+1}$, we are led to
$$
(\mu+\L) (\mf{d}_{\mu,k+1} - \mf{d}_{2\mu,k+1}) = \mf{d}_{2\mu,k} - \mf{d}_{4\mu,k} + \mu \ \mf{d}_{2\mu,k+1} = \mu\left(\frac{2}{c_k} + 1\right) \mf{d}_{2\mu,k+1},
$$
and thus, $\mf{d}_{\mu,k+2} = (\mu+\L)^{-1} \mf{d}_{2\mu,k+1}$.
\end{proof}
\noindent
In order to be consistent with (\ref{eq:Lmfd-muL}), we set $\mf{d}_{\mu,0} = \mf{d}$ for all $\mu>0$.
%By definition,  
%
%\begin{equation*}
%\mf{d}_{\mu,1}=(\mu+\L)^{-1}\mf{d}=(\mu+\L)^{-1}\mf{d}_{2\mu,0},
%\end{equation*}
%
%provided we set $\mf{d}_{\mu,0}:=\mf{d}$ for all $\mu>0$.
%Hence \eqref{eq:Lmfd-muL} holds for $k=0$.
%Let us assume that \eqref{eq:Lmfd-muL} holds for some $k\in \N$. In particular, this implies
%
%\begin{eqnarray*}
%(\mu+\L) \mf{d}_{\mu,k+1} &=& \mf{d}_{2\mu,k},\\
%(\mu+\L) \mf{d}_{2\mu,k+1} &=& \mf{d}_{4\mu,k} - \mu \ \mf{d}_{2\mu,k+1}.
%\end{eqnarray*}
%
%Using in addition the defining equation \eqref{eq:def-mfd-rec}, this yields
%
%\begin{equation*}
%(\mu+\L) (\mf{d}_{\mu,k+1}-\mf{d}_{2\mu,k+1}) \,=\, \mf{d}_{2\mu,k}-\mf{d}_{4\mu,k} +\mu \ \mf{d}_{2\mu,k+1} 
%\,=\,\mu \left(\frac{2}{c_k} + 1\right) \mf{d}_{\mu,k+1},
%\end{equation*}
%
%so that $\mf{d}_{\mu,k+2} = (\mu+\L)^{-1} \mf{d}_{2\mu,k+1}$, as desired.
%\end{proof}
%

\medskip
\noindent We are now in position to define a suitable approximation of $A_\ho$.
The idea is to use the identity 
\begin{equation*}
\frac{1}{\lambda}\,=\,\frac{(\mu+\lambda)^2(2\mu+\lambda)^2\dots(2^{k-1}\mu+\lambda)^2}{\lambda(\mu+\lambda)^2(2\mu+\lambda)^2\dots(2^{k-1}\mu+\lambda)^2}
\end{equation*}
in \eqref{eq:dir-spec-1}, expand, and take advantage of Proposition~\ref{prop:mfd-rec} to efficiently compute terms of the form
$\mf{d}_{\mu,k}=(\mu+\L)^{-1}(2\mu+\L) ^{-1}\dots(2^{k-1}\mu+\L)^{-1}\mf{d}$.
This gives rise to the following (abstract) approximations of $A_\ho$, and systematic errors:
\begin{theo}\label{th:main-perio}
Let $A$ and $A_\ho$ be the $Q$-periodic diffusion matrix and the associated homogenized diffusion matrix of Definition~\ref{defi:A-per}.
For any fixed $\xi\in \R^d$ such that $|\xi|=1$, we denote by $e_{\mf{d}}$ the spectral measure \eqref{eq:spectral-measure} of $\L=-\nabla \cdot A\nabla$ projected 
on the local drift $\mf{d}=\nabla \cdot A\xi$.
For all $k\in \N$, we let $P_k:\R\times \R\to \R$ be the polynomial given by
\begin{equation}\label{eq:def-Pk}
P_k(\mu,\lambda) \,=\, \lambda^{-1}\left((\mu+\lambda)^2(2 \mu+\lambda)^2 \cdots (2^{k-1}\mu+\lambda)^2 - 2^{k(k-1)} \mu^{2k}\right),
\end{equation}
and for all $\mu>0$, we define the approximation $A_{\mu,k}$ of $A_\ho$ by
\begin{equation}\label{eq:th1-def-app-A}
\xi\cdot A_{\mu,k}\xi\,=\,\expec{\xi\cdot A\xi}-\int_{\R^+}\frac{P_k(\mu,\lambda)}{(\mu+\lambda)^2(2 \mu+\lambda)^2 \cdots (2^{k-1}\mu+\lambda)^2 } \d e_{\mf{d}}(\lambda).
\end{equation}
Then the systematic error satisfies
\begin{equation}
%0\,\leq\, \xi\cdot (A_{\mu,k}-A_\ho)\xi \,\leq\, 2^{k(k-1)} \expec{|A|^2}\left(\frac{N^2}{4 \alpha\pi^2}\right)^{2k+1} \ \mu^{2k}.
0\,\leq\, \xi\cdot (A_{\mu,k}-A_\ho)\xi \,\leq\, 2^{k(k-1)} \expec{|A|^2}\left(\frac{1}{4 \alpha\pi^2}\right)^{2k} \ \mu^{2k}.
\end{equation}
\end{theo}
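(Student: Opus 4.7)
The plan is to reduce the systematic error to a single clean spectral integral and then estimate it via the spectral gap of $\L$ and a standard energy bound on the corrector. First, I would combine \eqref{eq:hom-Dir} with \eqref{eq:dir-spec-1} to rewrite $\xi\cdot A_\ho\xi=\expec{\xi\cdot A\xi}-\int_{\R^+}\lambda^{-1}\,de_{\mf{d}}(\lambda)$, and then subtract from the definition \eqref{eq:th1-def-app-A} of $A_{\mu,k}$. This expresses the systematic error as a single integral against $e_{\mf{d}}$ whose integrand is $\lambda^{-1}-P_k(\mu,\lambda)\bigl/\prod_{j=0}^{k-1}(2^j\mu+\lambda)^2$.

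Next, I would simplify this integrand by putting the two fractions over the common denominator $\lambda\prod_{j=0}^{k-1}(2^j\mu+\lambda)^2$. Inserting the definition \eqref{eq:def-Pk} of $P_k$, the $\lambda$-dependent pieces of the numerator cancel by design, leaving only the constant term (in $\lambda$) of $\prod_{j=0}^{k-1}(2^j\mu+\lambda)^2$, which evaluates at $\lambda=0$ to $\prod_{j=0}^{k-1}(2^j\mu)^2=2^{k(k-1)}\mu^{2k}$. This yields the clean identity
\[
\xi\cdot(A_{\mu,k}-A_\ho)\xi\;=\;2^{k(k-1)}\mu^{2k}\int_{\R^+}\frac{de_{\mf{d}}(\lambda)}{\lambda\,\prod_{j=0}^{k-1}(2^j\mu+\lambda)^2}.
\]
Non-negativity of the error is then immediate, since $e_{\mf{d}}$ is a non-negative measure and the integrand is positive on the support of $e_{\mf{d}}$.

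For the quantitative upper bound, I would use the crude estimate $2^j\mu+\lambda\ge\lambda$ for $j=0,\dots,k-1$ to bound the denominator from below by $\lambda^{2k+1}$, reducing the task to controlling $\int_{\R^+}\lambda^{-(2k+1)}\,de_{\mf{d}}(\lambda)$. The key structural input is the spectral gap of $\L$: by the periodic Poincar\'e inequality on the unit cube $Q$ (optimal constant $(2\pi)^2$) together with the coercivity $A\ge\alpha\, \Id$, the smallest non-zero eigenvalue of $\L$ on $L^2_0(Q)$ satisfies $\lambda_1\ge 4\alpha\pi^2$, so that $\lambda^{-(2k+1)}\le(4\alpha\pi^2)^{-2k}\lambda^{-1}$ on the support of $e_{\mf{d}}$. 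This reduces matters to estimating $\int\lambda^{-1}\,de_{\mf{d}}=\mathcal{E}(\phi,\phi)$.

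Finally, testing the weak formulation \eqref{eq:corr-wf} of the corrector equation against $\phi$ and applying Cauchy--Schwarz with respect to the $A$-weighted inner product yields $\mathcal{E}(\phi,\phi)=-\int_Q\nabla\phi\cdot A\xi\,dx\le\mathcal{E}(\phi,\phi)^{1/2}\bigl(\int_Q\xi\cdot A\xi\,dx\bigr)^{1/2}$, hence $\mathcal{E}(\phi,\phi)\le\int_Q\xi\cdot A\xi\,dx\le\expec{|A|^2}$ (for $|\xi|=1$ and $|Q|=1$). Collecting the factors $2^{k(k-1)}\mu^{2k}$, $(4\alpha\pi^2)^{-2k}$, and $\expec{|A|^2}$ produces the announced estimate. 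The only mildly delicate point is the telescoping algebra in the second step, engineered into the very definition of $P_k$; once the integral representation of the error is in hand, the spectral gap of $\L$ and the elementary energy estimate for $\phi$ close the argument along standard lines.
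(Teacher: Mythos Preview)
Your proposal is correct and follows essentially the same route as the paper: derive the spectral identity for the error, observe non-negativity, bound $\prod_j(2^j\mu+\lambda)^2\ge\lambda^{2k}$, invoke the spectral gap $\lambda_1\ge 4\alpha\pi^2$ to extract $(4\alpha\pi^2)^{-2k}$, and finish by controlling $\int\lambda^{-1}\,de_{\mf{d}}$. The only cosmetic difference is the last step: the paper bounds $\sum_i\langle\mf{d},\psi_i\rangle^2/\lambda_i$ directly as $\|\mf{d}\|_{H^{-1}_\per}^2\le\expec{|A|^2}$, whereas you identify this sum with $\mathcal{E}(\phi,\phi)$ and bound it via Cauchy--Schwarz on the weak formulation; both are standard and yield the same constant.
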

\begin{proof}
Starting point is the identity 
\begin{eqnarray*}
\xi \cdot (A_{\mu,k} - A_\ho) \xi 
& = & \int_{\R^+}\left( \frac{1}{\lambda} - \frac{P_k(\mu,\lambda)}{(\lambda+\mu)^2 \cdots (\lambda + 2^{k-1} \mu)^2} \right) \ d e_{\mf{d}}(\lambda) \\
& = & \int_{\R^+}\frac{2^{k(k-1)} \mu^{2k}}{\lambda (\lambda+\mu)^2 \cdots (\lambda + 2^{k-1} \mu)^2} \ d e_{\mf{d}}(\lambda) ,
\end{eqnarray*}
which is a direct consequence of \eqref{eq:hom-Dir}, \eqref{eq:dir-spec-1}, and \eqref{eq:th1-def-app-A}.
From this identity, and using Definition~\ref{defi:decomp-spec-perio}, we infer that the systematic error is smaller than and asymptotically equivalent 
to $C \mu^{2k}$ (as $\mu$ tends to $0$), where $C > 0$ is given by
\begin{equation}
\label{eq:majo-C}
C\,:=\,2^{k(k-1)} \int_{\R^+}\frac{1}{\lambda^{2k+1}} \ d e_{\mf{d}}(\lambda) = 2^{k(k-1)} \sum_{i \in\N} \frac{\expec{\mf{d},\psi_i}_{H^{-1}_\per,H^1_\per}^2}{\lambda_i^{2k+1}} .
\end{equation}
In order to estimate $C$ via \eqref{eq:majo-C}, we compare the spectral gap $\lambda_1$ of $\L$ to the spectral gap $\lambda_1^0$ of $- \triangle$ on $H^1_\per(Q)$.
By comparison of the two Dirichlet forms, we have
\begin{equation*}
\lambda_1 \,\geq \, \alpha \lambda_1^0.
\end{equation*}
The spectrum of the Laplace operator on $H^1_\per(Q)$ is explicitly known, and the spectral gap given by
%
%\begin{equation*}
%$\lambda^0_1\,=\,\frac{4\pi^2}{N^2}$.
$\lambda^0_1\,=\,4\pi^2$.
%\left\{\frac{4\pi^2}{N^2}(n_1^2 + \cdots + n_d^2),n\in \N^d\right\}.
%\end{equation*}^
%
Hence, recalling that $\expec{\mf{d},1}_{H^{-1}_\per,H^1_\per}=0$ and using the characterization \eqref{eq:carac-H-1}
of $H^{-1}_\per(Q)$, one may bound the r.~h.~s. of \eqref{eq:majo-C} by
\begin{eqnarray*}
C&\leq& 2^{k(k-1)} \frac{1}{(\alpha \lambda^0_1)^{2k}} \sum_{i \in \N}\frac{\expec{\mf{d},\psi_i}_{H^{-1}_\per,H^1_\per}^2}{\lambda_i} \\
%&= &  2^{k(k-1)} \|\mf{d}\|_{H^{-1}_\per}^2\left(\frac{N^2}{4\alpha\pi^2}\right)^{2k+1} \\
&= &  2^{k(k-1)} \|\mf{d}\|_{H^{-1}_\per}^2\left(\frac{1}{4\alpha\pi^2}\right)^{2k} \\
%&\leq &  2^{k(k-1)} \expec{|A|^2}\left(\frac{N^2}{4 \alpha\pi^2}\right)^{2k+1} ,
&\leq &  2^{k(k-1)} \expec{|A|^2}\left(\frac{1}{4 \alpha\pi^2}\right)^{2k} ,
\end{eqnarray*}
as desired.
\end{proof}

\subsection{New formulas for the approximation of homogenized coefficients} \label{sec:new_fo_per}

In this subsection, we show how to rewrite the approximations $A_{\mu,k}$ of $A_\ho$ introduced in Theorem~\ref{th:main-perio}
in terms of the modified correctors $\phi_\mu,\phi_{2\mu},\dots,\phi_{2^{k-1}\mu}$.
We proceed by induction.
\begin{prop}\label{prop:new-fo-PDE}
Let $c_k$ be as in Definition~\ref{def:mfd-mu-k}.
We define the sequence $\{a_{k,i}\}_{k\geq 1,i\in \{0,\dots,k-1\}}$ by $a_{1,0}=1$ and the induction rules
\begin{eqnarray*}
a_{k+1,0}&=&c_k a_{k,0} ,\\
a_{k+1,i}&=&c_ka_{k,i}-2^{1-k}c_ka_{k,i-1} \qquad \text{ for }i\in \{1,k-1\} ,\\
a_{k+1,k}&=&-2^{1-k}c_ka_{k,k-1}.
\end{eqnarray*}
Within the assumptions and notation of Theorem~\ref{th:main-perio}, the approximations $A_{\mu,k}$ of $A_\ho$
satisfy the formula: for all $\xi\in \R^d$,
\begin{eqnarray}\label{eq:new-fo-PDE}
\xi\cdot A_{\mu,k}\xi &=&
%\sum_{i=0}^{k-1}\delta_{k,i}\expec{(\xi+\nabla \phi_{2^i\mu})\cdot A (\xi+\nabla \phi_{2^i\mu})} 
\expec{(\xi+\nabla \phi_{\mu})\cdot A (\xi+\nabla \phi_{\mu})} 
\nonumber \\
&&\qquad \qquad +\mu\sum_{i=0}^{k-1}\eta_{k,i}\expec{\phi_{2^i\mu}^2}+\mu\sum_{i=0}^{k-1}\sum_{j>i}^{k-1}\nu_{k,i,j}\expec{\phi_{2^i\mu}\phi_{2^j\mu}},
\end{eqnarray}
where the $\{\phi_{2^i\mu}\}_{i\in \N}$ are the modified correctors associated with $\xi$ through \eqref{eq:corr-mu},
and the coefficients 
%$\{\delta_{k,i}\}_{k\geq 1,0\leq i<k}$, 
$\{\eta_{k,i}\}_{k\geq 1,0\leq i<k}$, and $\{\nu_{k,i,j}\}_{k\geq 2,0\leq j<k,0\leq i<j}$ are defined by the initial value
%$\delta_{1,0}=1,\eta_{1,0}=0$, 
$\eta_{1,0}=0$, and the induction rules
\begin{eqnarray*}
\eta_{k+1,i}&=&\eta_{k,i}+(2^{k(k-1)+i}-2^{k^2+1})a_{k+1,i}^2\qquad \text{ for }i\in \{0,k-1\},\\
\eta_{k+1,k}&=&-2^{k^2}a_{k+1,k}^2,\\
\nu_{k+1,i,k}&=&\big(2^{k(k-1)+i}-3\times 2^{k^2}\big)a_{k+1,i}a_{k+1,k},\\
\nu_{k+1,i,j}&=&\nu_{k,i,j}+\big(2^{k(k-1)}(2^i+2^j)-2^{k^2+2}\big)a_{k+1,i}a_{k+1,j}\qquad \text{ for }j\in \{0,k-1\}.
\end{eqnarray*}
Note that $\{\nu_{k,i,j}\}_{k\geq 1,0\leq j<k,0\leq i<j}$ does not require further initialization.
\end{prop}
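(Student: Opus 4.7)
The plan is to prove \eqref{eq:new-fo-PDE} by induction on $k$, working at the level of the spectral representation \eqref{eq:th1-def-app-A} and then translating the telescoping difference $A_{\mu,k+1}-A_{\mu,k}$ back to physical space via the auxiliary functions $\mf{d}_{\mu,k}$. The base case $k=1$ is immediate: $P_1(\mu,\lambda)=\lambda+2\mu$ combined with \eqref{eq:dir-spec-2}--\eqref{eq:dir-spec-3} and \eqref{eq:hom-Dir-mu} gives $\xi\cdot A_{\mu,1}\xi=\expec{(\xi+\nabla\phi_\mu)\cdot A(\xi+\nabla\phi_\mu)}$, while all other coefficients vanish by the initial condition $\eta_{1,0}=0$ (the $\nu$ sum being empty).

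A preliminary step is to expand $\mf{d}_{\mu,k}$ explicitly in the basis of modified correctors. A direct induction on $k$ using \eqref{eq:def-mfd-rec} shows
\begin{equation*}
\mf{d}_{\mu,k}\,=\,\mu^{-(k-1)}\sum_{i=0}^{k-1}a_{k,i}\,\phi_{2^i\mu};
\end{equation*}
substituting this ansatz into $\mf{d}_{\mu,k+1}=c_k\mu^{-1}(\mf{d}_{\mu,k}-\mf{d}_{2\mu,k})$ and noting that the rescaling $\mu\mapsto 2\mu$ produces the extra factor $2^{1-k}$, one reads off precisely the recursion for $a_{k+1,i}$ stated in the proposition. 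The same type of induction also yields the orthogonality identity $\sum_{i=0}^{k-1}a_{k,i}=0$ for all $k\geq 2$, which will be crucial later.

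The core step is to compute $\xi\cdot(A_{\mu,k+1}-A_{\mu,k})\xi$ from the spectral formula. Placing both integrands over the common denominator $\prod_{i=0}^{k}(2^i\mu+\lambda)^2$, the leading terms cancel by $(2^k\mu+\lambda)^2\prod_{i=0}^{k-1}(2^i\mu+\lambda)^2=\prod_{i=0}^{k}(2^i\mu+\lambda)^2$, and the remaining numerator simplifies to $-2^{k^2+1}\mu^{2k+1}-2^{k(k-1)}\mu^{2k}\lambda$. Recognising $\int\prod_{i=0}^{k}(2^i\mu+\lambda)^{-2}\d e_{\mf{d}}$ as $\expec{\mf{d}_{\mu,k+1}^2}$ and the same integral with an extra factor of $\lambda$ as $\cE(\mf{d}_{\mu,k+1},\mf{d}_{\mu,k+1})$, one obtains
\begin{equation*}
\xi\cdot(A_{\mu,k+1}-A_{\mu,k})\xi\,=\,-2^{k^2+1}\mu^{2k+1}\expec{\mf{d}_{\mu,k+1}^2}-2^{k(k-1)}\mu^{2k}\cE(\mf{d}_{\mu,k+1},\mf{d}_{\mu,k+1}).
\end{equation*}

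To conclude, I expand both quadratic forms via the ansatz of Step~1. The $L^2$ piece produces directly $\mu$ times a bilinear combination of the $\expec{\phi_{2^i\mu}\phi_{2^j\mu}}$ with coefficients $-2^{k^2+1}a_{k+1,i}a_{k+1,j}$. For the Dirichlet form, the identity $\L\phi_{2^i\mu}=\mf{d}-2^i\mu\phi_{2^i\mu}$ symmetrised in $i,j$ gives
\begin{equation*}
\cE(\phi_{2^i\mu},\phi_{2^j\mu})\,=\,\tfrac{1}{2}\bigl(\langle\phi_{2^i\mu},\mf{d}\rangle+\langle\phi_{2^j\mu},\mf{d}\rangle\bigr)-\tfrac{2^i+2^j}{2}\mu\,\expec{\phi_{2^i\mu}\phi_{2^j\mu}},
\end{equation*}
and the $\langle\phi_{2^i\mu},\mf{d}\rangle$ contributions factorise as $\bigl(\sum_{j=0}^{k}a_{k+1,j}\bigr)\bigl(\sum_{i=0}^{k}a_{k+1,i}\langle\phi_{2^i\mu},\mf{d}\rangle\bigr)=0$ thanks to the orthogonality relation from Step~1. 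Splitting the resulting double sums into the four index regimes $i=j<k$, $i=j=k$, $i<j<k$, and $i<j=k$, and adding the inductive formula at level $k$, one checks directly that the coefficients in front of $\mu\expec{\phi_{2^i\mu}^2}$ and $\mu\expec{\phi_{2^i\mu}\phi_{2^j\mu}}$ match the four recursions for $\eta_{k+1,\cdot}$ and $\nu_{k+1,\cdot,\cdot}$ in the statement, closing the induction. The main obstacle is this last piece of book-keeping, and in particular the orthogonality identity $\sum_i a_{k+1,i}=0$: without it the Dirichlet form would leave residual $\langle\phi_{2^i\mu},\mf{d}\rangle$ terms that have no physical-space counterpart in \eqref{eq:new-fo-PDE}.
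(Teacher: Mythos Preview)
Your proof is correct and follows essentially the same four-step architecture as the paper's: the spectral recursion $\xi\cdot(A_{\mu,k+1}-A_{\mu,k})\xi=-2^{k(k-1)}\mu^{2k}\cE(\mf{d}_{\mu,k+1},\mf{d}_{\mu,k+1})-2^{k^2+1}\mu^{2k+1}\expec{\mf{d}_{\mu,k+1}^2}$, the expansion $\mu^{k-1}\mf{d}_{\mu,k}=\sum_i a_{k,i}\phi_{2^i\mu}$ together with $\sum_i a_{k,i}=0$, and the final coefficient matching by induction are identical. The only cosmetic difference is that you express $\cE(\phi_{2^i\mu},\phi_{2^j\mu})$ directly via $\langle\phi_{2^i\mu},\mf{d}\rangle$ rather than via the energy densities $\expec{(\xi+\nabla\phi_{2^i\mu})\cdot A(\xi+\nabla\phi_{2^i\mu})}$ as the paper does; this is slightly cleaner but leads to exactly the same cancellation from $\sum_i a_{k+1,i}=0$.
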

\begin{proof}
We proceed in four steps.

\medskip
\step{1} Proof that for all $k\geq 1$,
\begin{equation}\label{eq:pr-prop2-1.1}
\xi\cdot A_{\mu,k+1}\xi \,=\,\xi\cdot A_{\mu,k}\xi-2^{k(k-1)}\mu^{2k}\mathcal{E}(\mf{d}_{\mu,k+1},\mf{d}_{\mu,k+1})-2^{k^2+1}\mu^{2k+1}\expec{\mf{d}_{\mu,k+1}^2}.
\end{equation}
In order to prove \eqref{eq:pr-prop2-1.1}, we first note that the polynomials $P_{k}$ defined in \eqref{eq:def-Pk} satisfy the identity 
\begin{equation}\label{eq:pr-prop2-1.2}
P_{k+1}(\mu,\lambda)\,=\,(2^k\mu+\lambda)^2P_k(\mu,\lambda)+2^{k(k-1)}\lambda \mu^{2k}+2^{k^2+1}\mu^{2k+1}.
\end{equation}
Hence, formula~\eqref{eq:th1-def-app-A} implies
\begin{eqnarray*}
\xi\cdot A_{\mu,k+1}\xi&=&\expec{\xi\cdot A\xi}-\int_{\R^+}\frac{P_{k+1}(\mu,\lambda)}{(\mu+\lambda)^2(2 \mu+\lambda)^2 \cdots (2^{k}\mu+\lambda)^2 } \d e_{\mf{d}}(\lambda) \\
&\stackrel{\eqref{eq:pr-prop2-1.2}}{=}&\expec{\xi\cdot A\xi}-\int_{\R^+}\frac{(2^k\mu+\lambda)^2P_{k}(\mu,\lambda)}{(\mu+\lambda)^2(2 \mu+\lambda)^2 \cdots (2^{k}\mu+\lambda)^2 } \d e_{\mf{d}}(\lambda)  \\
&& -\int_{\R^+}\frac{2^{k(k-1)}\lambda \mu^{2k}+2^{k^2+1}\mu^{2k+1}}{(\mu+\lambda)^2(2 \mu+\lambda)^2 \cdots (2^{k}\mu+\lambda)^2 } \d e_{\mf{d}}(\lambda) \\
&\stackrel{\eqref{eq:th1-def-app-A}}{=}&\xi\cdot A_{\mu,k}\xi-\int_{\R^+}\frac{2^{k(k-1)}\lambda \mu^{2k}+2^{k^2+1}\mu^{2k+1}}{(\mu+\lambda)^2(2 \mu+\lambda)^2 \cdots (2^{k}\mu+\lambda)^2 } \d e_{\mf{d}}(\lambda).
\end{eqnarray*}
From \eqref{eq:Lmfd-muL} in Proposition~\ref{prop:mfd-rec}, we infer that $(\mu+\L)^{-1}\cdots(2^{k}\mu+\L)^{-1}\mf{d}=\mf{d}_{\mu,k+1}$, so that the above identity turns into
\begin{eqnarray*}
\xi\cdot A_{\mu,k+1}\xi&=&\xi\cdot A_{\mu,k}\xi-2^{k(k-1)}\mu^{2k}\expec{\L \mf{d}_{\mu,k+1},\mf{d}_{\mu,k+1}}_{H^{-1}_\per,H^1_\per}\\
&&\qquad \qquad \qquad \qquad -2^{k^2+1}\mu^{2k+1}\expec{\mf{d}_{\mu,k+1},\mf{d}_{\mu,k+1}}_{H^{-1}_\per,H^1_\per} \\
&=&\xi\cdot A_{\mu,k}\xi-2^{k(k-1)}\mu^{2k}\mathcal{E}(\mf{d}_{\mu,k+1},\mf{d}_{\mu,k+1}) -2^{k^2+1}\mu^{2k+1}\expec{\mf{d}_{\mu,k+1}^2}, 
\end{eqnarray*}
as desired.

\medskip
\step{2} Proof that for all $k\geq 1$,
\begin{equation}\label{eq:pr-prop2-2.1}
\mu^{k-1} \mf{d}_{\mu,k}\,=\,\sum_{i=0}^{k-1}a_{k,i}\phi_{2^i\mu}.
\end{equation}
We proceed by induction, and assume that \eqref{eq:pr-prop2-2.1} holds at step $k$.
The induction rule \eqref{eq:def-mfd-rec} then yields at step $k+1$
\begin{eqnarray*}
\mu^k \mf{d}_{\mu,k+1}&=&\mu^{k-1}c_k(\mf{d}_{\mu,k}-\mf{d}_{2\mu,k}) \\
&=&c_k \mu^{k-1}\mf{d}_{\mu,k}-c_k 2^{1-k}(2\mu)^{k-1}\mf{d}_{2\mu,k}\\
&=&c_k\sum_{i=0}^{k-1}a_{k,i}\phi_{2^i\mu}-c_k 2^{1-k}\sum_{i=0}^{k-1}a_{k,i}\phi_{2^{i+1}\mu} \\
&=&c_k a_{k,0}\phi_\mu+\bigg(\sum_{i=1}^{k-1}c_k(a_{k,i}-2^{1-k}a_{k,i-1})\phi_{2^i\mu}\bigg)-c_k2^{1-k}a_{k,k-1}\phi_{2^k\mu},
\end{eqnarray*}
so that $\mu^k \mf{d}_{\mu,k+1}\,=\,\sum_{i=0}^{k}a_{k,i}\phi_{2^i\mu}$, as desired.
It remains to recall that $\mf{d}_{\mu,1}\,=\,\phi_\mu$ to conclude the proof of \eqref{eq:pr-prop2-2.1}. 

\medskip
\noindent Note that $a_{2,0}=1$ and $a_{2,1}=-1$.
In particular, $a_{2,0}+a_{2,1}=0$ and the property
\begin{equation}\label{eq:pr-prop2-2.2}
\sum_{i=0}^{k-1}a_{k,i}\,=\,0
\end{equation}
follows by induction, for all $k\geq 2$.

\medskip
\step{3} Proof that for all $i,j\geq 1$,
\begin{eqnarray}
\mathcal{E}(\phi_{2^i\mu},\phi_{2^j\mu})&=&\expec{\xi\cdot A\xi}-\frac{1}{2}\bigg( \expec{(\xi+\nabla \phi_{2^i\mu})\cdot A (\xi+\nabla \phi_{2^i\mu})}  \nonumber \\
&& \qquad \qquad \qquad +\expec{(\xi+\nabla \phi_{2^j\mu})\cdot A (\xi+\nabla \phi_{2^j\mu})} \bigg) \nonumber \\
&&-\mu 2^{i-1}\expec{\phi_{2^i\mu}(\phi_{2^i\mu}+\phi_{2^j\mu})} -\mu 2^{j-1}\expec{\phi_{2^j\mu}(\phi_{2^i\mu}+\phi_{2^j\mu})} . \label{eq:pr-prop2-3.1}
\end{eqnarray}
We can easily see that (\ref{eq:pr-prop2-3.1}) holds when $i=j$. From (\ref{eq:Ahom1}) and (\ref{eq:hom-Dir-mu}), we have indeed that
%
%
%For $i=j$, \eqref{eq:pr-prop2-3.1} coincides with \eqref{eq:hom-Dir-mu}.
%We quickly recall the argument:
%
%\begin{eqnarray*}
%\mathcal{E}(\phi_{2^i\mu},\phi_{2^i\mu})&=&\expec{\nabla \phi_{2^i\mu}\cdot A \nabla \phi_{2^i\mu}}\\
%&=&\expec{(\xi+\nabla \phi_{2^i\mu})\cdot A (\xi+\nabla \phi_{2^i\mu})}-\expec{\xi\cdot A\xi}-2\expec{\nabla \phi_{2^i\mu}\cdot A\xi}.
%\end{eqnarray*}
%
%We appeal to the weak form of the defining equation \eqref{eq:corr-mu} for $\phi_{2^i\mu}$, that is
%
%\begin{equation*}
%2^i\mu \expec{\phi_{2^i\mu}^2} + \expec{\nabla \phi_{2^i\mu}\cdot A \nabla \phi_{2^i\mu}} \,=\,-\expec{\nabla \phi_{2^i\mu}\cdot A\xi},
%\end{equation*}
%
%to turn the above identity into
%
\begin{eqnarray} \label{eq:pr-prop2-3.2}
\mathcal{E}(\phi_{2^i\mu},\phi_{2^i\mu})&=&\expec{\xi\cdot A\xi}-\expec{(\xi+\nabla \phi_{2^i\mu})\cdot A (\xi+\nabla \phi_{2^i\mu})}
-2^{i+1}\mu \expec{ \phi_{2^i\mu}^2},
\end{eqnarray}
as desired.
For general $i,j\in \N$, we have, using \eqref{eq:Ahom1weak} first for $\phi_{2^j\mu}$ and then for $\phi_{2^i\mu}$,
\begin{eqnarray*}
\mathcal{E}(\phi_{2^i\mu},\phi_{2^j\mu})&=&\expec{\nabla \phi_{2^i\mu}\cdot A \nabla \phi_{2^j\mu}}\\
&=&\expec{\nabla \phi_{2^i\mu}\cdot A (\xi+\nabla \phi_{2^j\mu})}-\expec{\nabla \phi_{2^i\mu} \cdot A\xi} \\
&\stackrel{\eqref{eq:Ahom1weak}}{=}&-2^j\mu \expec{\phi_{2^i\mu}\phi_{2^j\mu}}-\expec{\nabla \phi_{2^i\mu}\cdot A (\xi+\nabla \phi_{2^i\mu})}+\expec{\nabla \phi_{2^i\mu}\cdot A \nabla \phi_{2^i\mu}} \\
&\stackrel{\eqref{eq:Ahom1weak}}{=}&-2^j\mu \expec{\phi_{2^i\mu}\phi_{2^j\mu}}+2^i\mu \expec{\phi_{2^i\mu}^2}+\mathcal{E}(\phi_{2^i\mu},\phi_{2^i\mu})\\
&\stackrel{\eqref{eq:pr-prop2-3.2}}{=}& -2^j\mu \expec{\phi_{2^i\mu}\phi_{2^j\mu}}-2^{i}\mu \expec{ \phi_{2^i\mu}^2}\\
&&\qquad \qquad +\expec{\xi\cdot A\xi}-\expec{(\xi+\nabla \phi_{2^i\mu})\cdot A (\xi+\nabla \phi_{2^i\mu})}.
\end{eqnarray*}
We conclude the proof of \eqref{eq:pr-prop2-3.1} by changing the roles of $i$ and $j$.

\medskip
\step{4} Proof of \eqref{eq:new-fo-PDE}.

\noindent In view of \eqref{eq:pr-prop2-1.1}, we have to estimate two terms. We begin with the Dirichlet form:
inserting \eqref{eq:pr-prop2-2.1} in the integral yields
\begin{equation*}
\mu^{2k}\mathcal{E}(\mf{d}_{\mu,k+1},\mf{d}_{\mu,k+1})\,=\,\sum_{i=0}^k\sum_{j=0}^ka_{k+1,i}a_{k+1,j}\mathcal{E}(\phi_{2^i\mu},\phi_{2^j\mu}).
\end{equation*}
We then appeal to \eqref{eq:pr-prop2-3.1} to turn this identity into
\begin{eqnarray*}
\lefteqn{\mu^{2k}\mathcal{E}(\mf{d}_{\mu,k+1},\mf{d}_{\mu,k+1})}\\
&=&\bigg(\sum_{i=0}^ka_{k+1,i}\bigg)^2\expec{\xi\cdot A\xi}
-\sum_{i=0}^k\sum_{j>i}^k\mu(2^i+2^j)a_{k+1,i}a_{k+1,j}\expec{\phi_{2^i\mu}\phi_{2^j\mu}}\\
&&  \qquad \qquad \qquad -\sum_{i=0}^k\mu 2^i a_{k+1,i}^2\expec{\phi_{2^i\mu}^2}\\
&&-\sum_{i=0}^ka_{k+1,i}\bigg(\sum_{j=0}^ka_{k+1,j}\bigg)\bigg(2^i\mu \expec{\phi_{2^i\mu}^2}
+\expec{(\xi+\nabla \phi_{2^i\mu})\cdot A (\xi+\nabla \phi_{2^i\mu})} \bigg) .
\end{eqnarray*}
Taking into account \eqref{eq:pr-prop2-2.2}, we finally have
\begin{multline}\label{eq:pr-prop2-4.1}
\mu^{2k}\mathcal{E}(\mf{d}_{\mu,k+1},\mf{d}_{\mu,k+1})
\\
\,=\,-\sum_{i=0}^k\sum_{j>i}^k\mu(2^i+2^j)a_{k+1,i}a_{k+1,j}\expec{\phi_{2^i\mu}\phi_{2^j\mu}}-\sum_{i=0}^k\mu 2^i a_{k+1,i}^2\expec{\phi_{2^i\mu}^2}.
\end{multline}
We now turn to the last term of the r.~h.~s. of \eqref{eq:pr-prop2-1.1} and appeal to \eqref{eq:pr-prop2-2.1}:
\begin{eqnarray}
\mu^{2k+1}\expec{\mf{d}_{\mu,k+1}^2}&=&\sum_{i=0}^k \mu a_{k+1,i}^2\expec{\phi_{2^i\mu}^2} 
+2\sum_{i=0}^k\sum_{j>i}^k \mu a_{k+1,i}a_{k+1,j} \expec{\phi_{2^i\mu}\phi_{2^j\mu}} .\label{eq:pr-prop2-4.2}
\end{eqnarray}
We then prove  \eqref{eq:new-fo-PDE} by induction, recalling that 
\begin{equation*}
A_{\mu,1}\,=\,\expec{(\xi+\nabla \phi_{\mu})\cdot A (\xi+\nabla \phi_{\mu})}.
\end{equation*}
Let us assume that \eqref{eq:new-fo-PDE}  holds at step $k\geq 1$.
Combined with \eqref{eq:pr-prop2-4.1} \& \eqref{eq:pr-prop2-4.2}, \eqref{eq:pr-prop2-1.1} turns into
\begin{eqnarray*}
\xi\cdot A_{\mu,k+1}\xi &=&\xi\cdot A_{\mu,k}\xi-2^{k(k-1)}\mu^{2k}\mathcal{E}(\mf{d}_{\mu,k+1},\mf{d}_{\mu,k+1})-2^{k^2+1}\mu^{2k+1}\expec{\mf{d}_{\mu,k+1}^2} \\
&=&\expec{(\xi+\nabla \phi_{\mu})\cdot A (\xi+\nabla \phi_{\mu})} 
\nonumber \\
&&\qquad \qquad + \mu\sum_{i=0}^{k-1}\eta_{k,i}\expec{\phi_{2^i\mu}^2}+ \mu\sum_{i=0}^{k-1}\sum_{j>i}^{k-1}\nu_{k,i,j}\expec{\phi_{2^i\mu}\phi_{2^j\mu}} \\
&& + 2^{k(k-1)}  \mu \sum_{i=0}^k\sum_{j>i}^k(2^i+2^j)a_{k+1,i}a_{k+1,j}\expec{\phi_{2^i\mu}\phi_{2^j\mu}}+ 2^{k(k-1)} \mu \sum_{i=0}^k 2^i a_{k+1,i}^2\expec{\phi_{2^i\mu}^2}
 \\
&&
-2^{k^2 + 1} \mu \sum_{i=0}^k a_{k+1,i}^2\expec{\phi_{2^i\mu}^2}-2^{k^2 + 2}\mu \sum_{i=0}^k\sum_{j>i}^k a_{k+1,i}a_{k+1,j} \expec{\phi_{2^i\mu}\phi_{2^j\mu}} ,
\end{eqnarray*}
from which we deduce that \eqref{eq:new-fo-PDE}  holds at step $k+1$.
\end{proof}

\medskip
\noindent Proposition~\ref{prop:new-fo-PDE} yields the following formulas for the first four approximations of $A_\ho$:
\begin{eqnarray*}
\xi\cdot A_{\mu,1}\xi&=& \expec{(\xi+\nabla \phi_{\mu})\cdot A (\xi+\nabla \phi_{\mu})},\\
\xi\cdot A_{\mu,2}\xi&=& \expec{(\xi+\nabla \phi_{\mu})\cdot A (\xi+\nabla \phi_{\mu})}-3\mu \expec{\phi_{\mu}^2}-2\mu \expec{\phi_{2\mu}^2}+5\mu \expec{\phi_{\mu}\phi_{2\mu}},\\
\xi\cdot A_{\mu,3}\xi&=& \expec{(\xi+\nabla \phi_{\mu})\cdot A (\xi+\nabla \phi_{\mu})}-\frac{55}{9}\mu \expec{\phi_{\mu}^2}-8\mu  \expec{\phi_{2\mu}^2}-\frac{4}{9}\mu \expec{\phi_{4\mu}^2}\\
&& \qquad \qquad \qquad \qquad +\frac{41}{3}\mu \expec{\phi_{\mu}\phi_{2\mu}}-\frac{22}{9}\mu \expec{\phi_{\mu}\phi_{4\mu}}+\frac{10}{3}\mu \expec{\phi_{2\mu}\phi_{4\mu}},\\
\xi\cdot A_{\mu,4}\xi&=& \expec{(\xi+\nabla \phi_{\mu})\cdot A (\xi+\nabla \phi_{\mu})}-\frac{3655}{441}\mu \expec{\phi_{\mu}^2}-\frac{128}{9}\mu  \expec{\phi_{2\mu}^2}-\frac{16}{9}\mu \expec{\phi_{4\mu}^2} \\
&&\qquad-\frac{8}{441}\mu \expec{\phi_{8\mu}^2}+\frac{1325}{63}\mu \expec{\phi_{\mu}\phi_{2\mu}}-\frac{370}{63}\mu \expec{\phi_{\mu}\phi_{4\mu}}+\frac{184}{441}\mu \expec{\phi_{\mu}\phi_{8\mu}} \\
&&\qquad \qquad \qquad \qquad +\frac{82}{9}\mu \expec{\phi_{2\mu}\phi_{4\mu}}-\frac{44}{63}\mu \expec{\phi_{2\mu}\phi_{8\mu}}+\frac{20}{63}\mu \expec{\phi_{4\mu}\phi_{8\mu}}.
\end{eqnarray*}

\subsection{Complete error estimate}

In this subsection, we combine the approximation formulas $A_{\mu,k}$ with the filtering method
used in \cite{Gloria-09}. The filters are defined as follows.
\begin{defi}\label{def:mask}
A function $\chi:[-1,1]\to \R^+$ is said to be a filter of order $p \geq 0$ if
\begin{itemize}
\item[(i)] $\chi \in C^{p}([-1,1])\cap W^{p+1,\infty}((-1,1))$,
\item[(ii)] $\int_{-1}^1 \chi(x)\d x=1$,
\item[(iii)] $\chi^{(k)}(-1)=\chi^{(k)}(1)=0$ for all $k\in \{0,\dots,p-1\}$.
\end{itemize}
The associated mask $\chi_L:[-L,L]^d\to \R^+$ in dimension $d\geq 1$ is then defined for all $L>0$ by
\begin{equation*}
\chi_L(x)\,:=\,L^{-d}\prod_{i=1}^d\chi(L^{-1}x_i),
\end{equation*}
where $x=(x_1,\dots,x_d)\in \R^d$.
\end{defi}
\noindent Let now $A$ and $A_\ho$ be as in Definition~\ref{defi:A-per}.
For all $k\geq 1$, $\mu>0$, $p\geq 0$, and $R\geq L>0$, we define the approximation $A_{\mu,k,R,L}$ of $A_\ho$
as
\begin{eqnarray}\label{eq:AkmuRL}
\xi\cdot A_{\mu,k,R,L}\xi&:=&\langle\!\langle (\xi+\nabla \phi_{\mu,R})\cdot A (\xi+\nabla \phi_{\mu,R}) \rangle\!\rangle_L \nonumber \\
&& +\mu\sum_{i=0}^{k-1}\eta_{k,i}\langle\!\langle\phi_{2^i\mu,R}^2 \rangle\!\rangle_L+\mu\sum_{i=0}^{k-1}\sum_{j>i}^{k-1}\nu_{k,i,j}\langle\!\langle\phi_{2^i\mu,R}\phi_{2^j\mu,R}\rangle\!\rangle_L,
\end{eqnarray}
where the coefficients $\eta_{k,i}$ and $\nu_{k,i,j}$ are as in Proposition~\ref{prop:new-fo-PDE}, the modified correctors $\phi_{2^i\mu,R}$ are
the unique weak solutions in $H^1_0(Q_R)$ to 
\begin{equation*}
2^i\mu \phi_{2^i\mu,R}-\nabla \cdot A(\xi+\nabla \phi_{2^i\mu,R})\,=\,0,
\end{equation*}
and $\langle\!\langle \cdot \rangle\!\rangle_L $ denotes the average with mask $\chi_L$:
\begin{equation*}
\langle\!\langle h \rangle\!\rangle_L\,:=\,\int_{\R^d}h(x)\chi_L(x)\d x.
\end{equation*}
The combination of \cite[Theorem~1]{Gloria-09} with Theorem~\ref{th:main-perio} and Proposition~\ref{prop:new-fo-PDE} then yields
\begin{theo}\label{th:error-estim-per}
Let $d\geq 2$, $A$ and $A_\ho$ be as in Definition~\ref{defi:A-per}, $k\geq 1$, $\chi$ be a filter of order $p\geq 0$, and $A_{\mu,k,R,L}$ be the
approximation \eqref{eq:AkmuRL} of the homogenization matrix, where
$R^2 \gtrsim \mu^{-1} \gtrsim R$, $R\geq L \sim R \sim R-L$.
Then, there exists $c>0$ depending only on $\alpha,\beta$ and $d$ such that we have
\begin{equation}\label{eq:th-error-estim-per}
|A_{\mu,k,R,L}-A_\ho|\,\lesssim\, L^{-(p+1)}+\mu^{2k}+\mu^{-1/4} \exp \left(-c \sqrt{\mu}(R-L)\right).
\end{equation}
\end{theo}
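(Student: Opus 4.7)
The plan is to decompose the error through the triangle inequality into two contributions,
$$|A_{\mu,k,R,L} - A_\ho| \,\leq\, |A_{\mu,k,R,L} - A_{\mu,k}| + |A_{\mu,k} - A_\ho|.$$
The second term is the systematic error, already controlled by Theorem~\ref{th:main-perio}, which yields the $\mu^{2k}$ contribution in \eqref{eq:th-error-estim-per}. For the first term, I would use the explicit expression of $A_{\mu,k}$ in terms of the periodic modified correctors $\phi_{2^i\mu}$ obtained in Proposition~\ref{prop:new-fo-PDE}, together with the definition \eqref{eq:AkmuRL} of $A_{\mu,k,R,L}$ in terms of the Dirichlet correctors $\phi_{2^i\mu,R}$ on $Q_R$. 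The difference $A_{\mu,k} - A_{\mu,k,R,L}$ is then a finite linear combination, with coefficients $\eta_{k,i}$ and $\nu_{k,i,j}$ depending only on $k$, of terms of the form
$$\expec{(\xi+\nabla \phi_\mu)\cdot A(\xi+\nabla \phi_\mu)} - \langle\!\langle (\xi+\nabla \phi_{\mu,R})\cdot A(\xi+\nabla \phi_{\mu,R})\rangle\!\rangle_L$$
and $\mu \expec{\phi_{2^i\mu}\phi_{2^j\mu}} - \mu\langle\!\langle \phi_{2^i\mu,R}\phi_{2^j\mu,R}\rangle\!\rangle_L$.

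Each such difference is further split via a triangle inequality into a mask-averaging error (replacing $\expec{\cdot}$ by $\langle\!\langle \cdot \rangle\!\rangle_L$ while keeping the exact periodic correctors $\phi_{2^i\mu}$) and a Dirichlet truncation error (replacing $\phi_{2^i\mu}$ by $\phi_{2^i\mu,R}$). The mask-averaging error is controlled by repeated integration by parts, using the $Q$-periodicity of the integrands and the vanishing of $\chi^{(k)}$ at $\pm 1$ for $k \leq p-1$ from Definition~\ref{def:mask}(iii); this produces an error of order $L^{-(p+1)}$. The Dirichlet truncation error is controlled by the exponential decay of the Green's function of $2^i\mu-\nabla\cdot A\nabla$, which gives a bound of order $\mu^{-1/4}\exp(-c\sqrt{2^i\mu}(R-L))$; since $2^i\mu \geq \mu$ for $i \geq 0$, each of these is absorbed in $\mu^{-1/4}\exp(-c\sqrt{\mu}(R-L))$. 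Both of these ingredients are precisely what \cite[Theorem~1]{Gloria-09} establishes for the first-order approximation $A_{\mu,1,R,L}$, and the higher-order case is structurally identical because only the same family of quadratic observables in the $\phi_{2^i\mu}$ appears.

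The main obstacle, then, is not conceptual but bookkeeping: one has to verify that applying the estimates of \cite{Gloria-09} to the mixed products $\phi_{2^i\mu,R}\phi_{2^j\mu,R}$ with $i\neq j$ does not degrade the bound. This reduces to the standard energy estimates $\|\phi_{2^i\mu}\|_{L^2(Q)}\lesssim (2^i\mu)^{-1/2}$ and $\|\nabla \phi_{2^i\mu}\|_{L^2(Q)}\lesssim 1$, so that by Cauchy--Schwarz the $\mu$ prefactor in front of $\mu\langle\!\langle\phi_{2^i\mu,R}\phi_{2^j\mu,R}\rangle\!\rangle_L$ compensates for $\|\phi_{2^i\mu}\|_{L^2}\|\phi_{2^j\mu}\|_{L^2}$ and all contributions fit inside $L^{-(p+1)}+\mu^{-1/4}\exp(-c\sqrt{\mu}(R-L))$ uniformly in $i,j \in \{0,\dots,k-1\}$. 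Since the number of terms and the coefficients $\eta_{k,i},\nu_{k,i,j}$ depend only on $k$, summing them contributes only to the implicit multiplicative constant, and combining this with the $\mu^{2k}$ bound from Theorem~\ref{th:main-perio} yields \eqref{eq:th-error-estim-per}.
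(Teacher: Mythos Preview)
Your proposal is correct and follows essentially the same approach as the paper, which simply states the theorem as a direct combination of \cite[Theorem~1]{Gloria-09}, Theorem~\ref{th:main-perio}, and Proposition~\ref{prop:new-fo-PDE} without spelling out the details. Your decomposition into systematic error plus mask-averaging error plus Dirichlet truncation error, together with the observation that the mixed terms $\mu\,\phi_{2^i\mu}\phi_{2^j\mu}$ are handled by the same estimates as in the $k=1$ case, is exactly the bookkeeping the paper leaves implicit.
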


\bigskip
\noindent In order to illustrate Theorem~\ref{th:error-estim-per}, we provide the results of numerical 
tests in a periodic discrete case in Appendix~\ref{append-perio}. They confirm the sharpness of the
analysis.

\section{The discrete stochastic case}\label{sec:sto}

We start this section by defining the discrete stochastic model we wish to consider.

\subsection{Notation and preliminaries}

We say that $x, y$ in $\Z^d$ are neighbors, and write $x \sim y$, whenever $|y-x| = 1$. This relation turns $\Z^d$ into a graph, whose set of (non-oriented) edges we will denote by $\mb{B}$.
We now turn to the definition of the associated diffusion coefficients, and their statistics.
\begin{defi}[environment]\label{defi:envi}
Let $\Omega = [\alpha,\beta]^{\mb{B}}$. An element $\o = (\o_e)_{e \in \mb{B}}$ of $\Omega$ is called an \emph{environment}. 
With any edge $e = (x,y) \in \mb{B}$, we associate the \emph{conductance} $\o_{x,y}:=\o_e$ (by construction $\o_{x,y} = \o_{y,x}$). 
Let $\nu$ be a probability measure on $[\alpha,\beta]$. 
We endow $\Omega$ with the product probability measure $\P = \nu ^{\otimes \mb{B}}$. 
In other words, if $\omega$ is distributed according to the measure $\P$, then $(\omega_e)_{e \in \mb{B}}$ are independent random variables of law $\nu$. 
We denote by $L^2(\Omega)$ the set of real square integrable functions on $\Omega$ for the measure $\mathbb{P}$,
and write $\expec{\cdot}$ for the expectation associated with $\P$. 
\end{defi}
\noindent In the framework of Definition~\ref{defi:envi}, we can introduce a notion of stationarity.
\begin{defi}[stationarity]\label{defi:stat}
For all $z\in \Z^d$, we let $\theta_z:\Omega\to \Omega$ be such that for all $\o \in \Omega$ and $(x,y)\in \mb{B}$,
$(\theta_z \ \omega)_{x,y}=\omega_{x+z,y+z}$. This defines an additive action group $\{\theta_z\}_{z\in \Z^d}$ on $\Omega$ which preserves
the measure $\mathbb{P}$, and is ergodic for  $\mathbb{P}$.

\noindent We say that a function $f:\Omega \times \Z^d\to \R$ is stationary if and only if for all $x,z\in\Z^d$ and
$\mathbb{P}$-almost every $\omega\in \Omega$, 
$$
f(x+z,\omega)\,=\,f(x,\theta_z \ \omega).
$$
In particular, with all $f\in L^2(\Omega)$, one may associate the stationary function (still denoted by $f$) $\Z^d\times\Omega\to \R,
(x,\omega) \mapsto f(\theta_x \ \omega)$.
In what follows we will not distinguish between $f\in L^2(\Omega)$ and its stationary extension on $\Z^d\times\Omega$.
\end{defi}
\noindent It remains to define the conductivity matrix on $\Z^d$.
\begin{defi}[conductivity matrix]\label{defi:conduct}
Let $\Omega$, $\mathbb{P}$, and $\{\theta_z\}_{z\in \Z^d}$ be as in Definitions~\ref{defi:envi} and~\ref{defi:stat}.
The stationary diffusion matrix $A:\Z^d\times\Omega\to \mathcal{M}_d(\R)$ is defined by
$$
A(x,\omega) = \dig{\o_{x,x+\ee_i} ,\dots, \o_{x,x+\ee_d}}.
$$
\end{defi}
\noindent For each $\omega\in\Omega$, we may consider the discrete elliptic equation whose operator is 
$$
-\nabla^*\cdot A (\cdot,\omega)\nabla,
$$
where $\nabla$ and $\nabla^*$ are defined for all $u:\Z^d\to \R$ by
\begin{equation}\label{eq:disc-nabla}
\nabla u(x):=\left[  
\begin{array}{l}
u(x+\ee_1)-u(x) \\
\vdots\\
u(x+\ee_d)-u(x)
\end{array}
\right],
\
\nabla^* u(x):=\left[  
\begin{array}{l}
u(x)-u(x-\ee_1) \\
\vdots\\
u(x)-u(x-\ee_d)
\end{array}
\right],
\end{equation}
and the backward divergence is denoted by $\nabla^*\cdot$, as usual.
The standard stochastic homogenization theory for such discrete elliptic operators (see for instance \cite{Kunnemann-83}, \cite{Kozlov-87}) %, and also \cite{Kipnis-Varadhan-86} for the environment viewed by the particle)  
ensures that 
there exist homogeneous and deterministic coefficients $A_\ho$ such that the solution operator of the continuum
differential operator $-\nabla \cdot A_\ho\nabla$ describes $\mathbb{P}$-almost surely 
the large scale behavior of the solution operator
of the discrete differential operator $-\nabla^*\cdot A (\cdot,\omega)\nabla $.
As for the periodic case, the definition of $A_\ho$ involves the so-called correctors $\phi:\Z^d\times \Omega \to \R$, which are solutions (in a sense made precise below) to
the equations
\begin{equation}\label{eq:corr-sto}
-\nabla^*\cdot A(x,\omega)(\xi+\nabla \phi(x,\omega))\,=\,0,\qquad x \in \Z^d,
\end{equation}
for $\xi \in \R^d$.
The following lemma gives the existence and uniqueness of the corrector $\phi$.
\begin{lemma}[corrector]\label{lem:corr}
Let $\Omega$, $\mathbb{P}$, $\{\theta_z\}_{z\in \Z^d}$, and $A$ be as in Definitions~\ref{defi:envi}, \ref{defi:stat}, and~\ref{defi:conduct}.
Then, for all $\xi\in \R^d$, there exists a unique measurable function $\phi:\Z^d\times \Omega\to \R$ such that 
$\phi(0,\cdot)\equiv 0$, $\nabla \phi$ is stationary, $\expec{\nabla \phi}=0$, and $\phi$ solves \eqref{eq:corr-sto} $\mathbb{P}$-almost surely.
Moreover, the symmetric homogenized matrix $A_\ho$ is characterized by
\begin{equation}\label{eq:hom-coeff}
\xi\cdot A_\ho\xi\,=\,\expec{(\xi+\nabla\phi)\cdot A(\xi+\nabla \phi)}.
\end{equation}
\end{lemma}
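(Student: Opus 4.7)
The plan is to adapt the classical variational construction of Kozlov and Papanicolaou--Varadhan to the present discrete stochastic setting. The crucial point is that one seeks a function $\phi$ whose \emph{gradient} (rather than $\phi$ itself) is stationary, so the natural primary object to construct is the gradient field $\Phi = \nabla \phi \in L^2(\Omega)^d$, and only afterwards the scalar field $\phi$ by integration along lattice paths.

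First, I would introduce the Hilbert subspace $L^2_{\mathrm{pot}}(\Omega) \subset L^2(\Omega)^d$ of ``potential'' fields, defined as the $L^2$-closure of fields of the form $\omega \mapsto \nabla \psi(0,\omega)$ where $\psi$ is the stationary extension of some bounded measurable $f:\Omega\to\R$. Elements of $L^2_{\mathrm{pot}}$ are characterized by a discrete curl-free condition around plaquettes. On the subspace $L^2_{\mathrm{pot},0}$ of mean-zero potentials, the bilinear form $a(\Phi,\Psi):=\expec{\Psi\cdot A(0,\omega)\Phi}$ is continuous and, thanks to the ellipticity bounds $\alpha\le A\le\beta$, coercive. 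The Lax--Milgram theorem applied to the linear functional $\Psi\mapsto -\expec{\xi\cdot A\Psi}$ then yields a unique $\Phi\in L^2_{\mathrm{pot},0}$ satisfying
\[
\expec{\Psi\cdot A(\xi+\Phi)}\,=\,0\qquad\text{for every }\Psi\in L^2_{\mathrm{pot},0}.
\]

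Second, I would reconstruct $\phi$ from $\Phi$. Setting $\phi(0,\omega)=0$ and $\phi(x,\omega):=\sum_{i=0}^{n-1}\Phi_{k_i}(\theta_{x_i}\omega)$ along any lattice path $0=x_0,x_1,\dots,x_n=x$ with unit increments in directions $\ee_{k_i}$, the discrete curl-free property of $\Phi$ combined with the stationarity of $\{\theta_z\}$ ensures $\mathbb{P}$-a.s.\ path-independence. This produces a scalar field $\phi$ with $\nabla\phi(x,\omega)=\Phi(\theta_x\omega)$, so $\nabla\phi$ is stationary and $\expec{\nabla\phi}=0$. Testing the variational identity against stationary gradients of bounded cylinder functions, combined with discrete integration by parts and ergodicity, implies that $-\nabla^*\cdot A(\xi+\nabla\phi)=0$ holds $\mathbb{P}$-a.s.\ pointwise on $\Z^d$. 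For uniqueness, if $\phi_1,\phi_2$ are two correctors, then $\nabla(\phi_1-\phi_2)\in L^2_{\mathrm{pot},0}$ solves the homogeneous equation; testing this against itself via the variational formulation forces $\expec{\nabla(\phi_1-\phi_2)\cdot A\nabla(\phi_1-\phi_2)}=0$, hence $\nabla\phi_1=\nabla\phi_2$ $\mathbb{P}$-a.s., and the normalization $\phi_i(0,\cdot)=0$ yields $\phi_1=\phi_2$. The formula \eqref{eq:hom-coeff} is then taken as the definition of $A_\ho$; its symmetry follows from the symmetry of $A$ together with the identity $\expec{\nabla\phi_\eta\cdot A(\xi+\nabla\phi_\xi)}=0$ obtained by testing the corrector equation for $\xi$ against $\phi_\eta$, which after polarization gives $\xi\cdot A_\ho\eta=\eta\cdot A_\ho\xi$.

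The main obstacle, beyond bookkeeping, is the reconstruction step: the discrete curl condition for $\Phi$ holds pointwise only up to a plaquette-dependent null set, so one must exploit the countability of $\Z^d$ and a Fubini/Borel--Cantelli argument to produce a \emph{single} full-measure event on which the path-independent definition of $\phi$ is simultaneously valid for all $x\in\Z^d$ and all choices of lattice path.
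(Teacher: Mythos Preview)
Your proposal is correct and follows the classical Kozlov / Papanicolaou--Varadhan variational construction. The paper itself does not give a proof of this lemma: it simply refers to \cite{Kunnemann-83} and remarks that ``the standard proof \ldots\ makes use of the regularization of \eqref{eq:corr-sto} by a zero-order term $\mu>0$,'' i.e.\ one first solves $(\mu+\L)\phi_\mu=\mf{d}$ in $L^2(\Omega)$ to obtain a genuinely stationary $\phi_\mu$, shows that $\nabla\phi_\mu$ is Cauchy in $L^2(\Omega)^d$ as $\mu\to 0$, and takes the limit to be $\nabla\phi$.

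The two routes are essentially equivalent in difficulty. Your direct Lax--Milgram argument on $L^2_{\mathrm{pot},0}$ is slightly cleaner conceptually and avoids any limiting procedure; the regularization route has the practical advantage, exploited throughout the rest of the paper, that the approximants $\phi_\mu$ are themselves stationary elements of $L^2(\Omega)$ (not just their gradients), so quantities like $\expec{\phi_\mu^2}$ make sense and the spectral calculus of $\L$ applies directly. One small point: in your reconstruction step, the invocation of ``ergodicity'' is not needed to pass from the weak identity $\expec{\D f\cdot A(\xi+\Phi)}=0$ for all bounded $f$ to the pointwise equation --- that step is just duality in $L^2(\Omega)$ followed by stationarity. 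Ergodicity enters elsewhere (e.g.\ in identifying $A_\ho$ with the almost-sure homogenized limit), but not here.
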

\noindent As mentioned in the introduction, the standard proof of Lemma~\ref{lem:corr} 
makes use of the regularization of \eqref{eq:corr-sto} by a zero-order term $\mu>0$:
\begin{equation}\label{eq:mod-corr-sto}
\mu\phi_\mu(x,\omega)-\nabla^*\cdot A(x,\omega)(\xi+\nabla \phi_\mu(x,\omega))\,=\,0, \qquad x \in \Z^d.
\end{equation}
\begin{lemma}[modified corrector]\label{lem:mod-corr}
Let $\Omega$, $\mathbb{P}$, $\{\theta_z\}_{z\in \Z^d}$, and $A$ be as in Definitions~\ref{defi:envi}, \ref{defi:stat}, and~\ref{defi:conduct}.
Then, for all $\mu>0$ and $\xi\in \R^d$, there exists a unique stationary function $\phi_\mu \in L^2(\Omega)$ 
which solves \eqref{eq:mod-corr-sto} $\mathbb{P}$-almost surely.
\end{lemma}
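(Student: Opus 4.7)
The plan is to transport the equation to the probability space $\Omega$ via stationarity and to apply the Lax--Milgram lemma on $L^2(\Omega)$. Because of the zero-order term $\mu>0$, no quotient or gradient space is required, which makes the argument much simpler than for the non-modified corrector of Lemma~\ref{lem:corr}.

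First, I would introduce the horizontal derivatives $D_i\psi(\omega):=\psi(\theta_{\ee_i}\o)-\psi(\o)$ together with their adjoints $D_i^*\psi(\omega):=\psi(\theta_{-\ee_i}\o)-\psi(\o)$ on $L^2(\Omega)$, and observe that if a stationary function $\phi_\mu$ on $\Z^d\times\Omega$ is identified with its trace $\psi_\mu(\o):=\phi_\mu(0,\o)\in L^2(\Omega)$, then the pointwise equation \eqref{eq:mod-corr-sto} evaluated at $x=0$ reads
\begin{equation*}
\mu\,\psi_\mu - D^*\cdot A(0,\cdot)\bigl(\xi+D\psi_\mu\bigr)\,=\,0 \qquad \P\text{-a.s.},
\end{equation*}
and, by stationarity, this single identity on $\Omega$ is equivalent to the pointwise equation on all of $\Z^d$.

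Next, I would formulate the weak version: find $\psi_\mu\in L^2(\Omega)$ such that for every $\varphi\in L^2(\Omega)$,
\begin{equation*}
B(\psi_\mu,\varphi)\,:=\,\mu\expec{\psi_\mu\,\varphi}+\expec{D\varphi\cdot A(0,\cdot)\,D\psi_\mu}\,=\,-\expec{D\varphi\cdot A(0,\cdot)\,\xi}=:L(\varphi).
\end{equation*}
Here I would use that $D_i$ is a bounded operator on $L^2(\Omega)$ (since $\expec{(D_i\varphi)^2}\leq 2\expec{\varphi(\theta_{\ee_i}\cdot)^2}+2\expec{\varphi^2}=4\expec{\varphi^2}$ by the $\theta$-invariance of $\P$), and that $\alpha\,\Id\leq A(0,\cdot)\leq \beta\,\Id$. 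These give $|B(\psi,\varphi)|\lesssim(\mu+\beta)\|\psi\|_{L^2(\Omega)}\|\varphi\|_{L^2(\Omega)}$, $|L(\varphi)|\lesssim \beta|\xi|\|\varphi\|_{L^2(\Omega)}$, and the fundamental coercivity estimate
\begin{equation*}
B(\psi,\psi)\,\geq\,\mu\,\expec{\psi^2}+\alpha\,\expec{|D\psi|^2}\,\geq\,\mu\,\|\psi\|^2_{L^2(\Omega)}.
\end{equation*}
Lax--Milgram then yields a unique $\psi_\mu\in L^2(\Omega)$ solving the weak problem.

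Finally, I would upgrade the weak formulation to the pointwise identity. Testing against functions of the form $\varphi(\o)=g(\theta_y\o)$ for arbitrary $g\in L^2(\Omega)$ and $y\in\Z^d$, using $D_i$-invariance under composition with translations and the stationarity of $A$, I would obtain that for every $y\in\Z^d$ the random variable $\mu\psi_\mu(\theta_y\cdot)-D^*\cdot A(y,\cdot)(\xi+D\psi_\mu(\theta_y\cdot))$ is orthogonal to every $g\in L^2(\Omega)$, hence vanishes $\P$-a.s. Defining $\phi_\mu(x,\o):=\psi_\mu(\theta_x\o)$ then yields the stationary extension solving \eqref{eq:mod-corr-sto} pointwise in $x$, $\P$-a.s. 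Uniqueness in $L^2(\Omega)$ is immediate from the coercivity bound $B(\psi,\psi)\geq\mu\|\psi\|_{L^2}^2$. No real obstacle is expected; the only delicate point is the back-and-forth between the one-point equation on $\Omega$ and the global equation on $\Z^d$, which is entirely controlled by the ergodic/stationary framework of Definition~\ref{defi:stat}.
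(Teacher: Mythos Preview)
Your Lax--Milgram argument on $L^2(\Omega)$ is correct and is the standard route. The paper itself does not supply a proof of Lemma~\ref{lem:mod-corr}: it states the result and defers to \cite{Kunnemann-83} (see the sentence ``We refer the reader to \cite{Kunnemann-83} for the proofs of the statements above'' closing the preliminaries). Your argument is in fact exactly what the paper uses implicitly a few lines later: after introducing $\L=-\D^*\cdot A\D$ on $L^2(\Omega)$ (Definition~\ref{defi:operator-sto}), it recasts \eqref{eq:mod-corr-sto} as $(\mu+\L)\phi_\mu=\mf{d}$ in~\eqref{eq:mod-corr-Om}, and since $\L$ is bounded, self-adjoint and non-negative, $\mu+\L$ is invertible for every $\mu>0$ --- which is your coercivity bound $B(\psi,\psi)\ge\mu\|\psi\|_{L^2(\Omega)}^2$ rephrased spectrally.

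One cosmetic remark: with your convention $D_i^*\psi(\o)=\psi(\theta_{-\ee_i}\o)-\psi(\o)$ (the genuine $L^2$-adjoint of $D_i$, which is the \emph{negative} of the paper's $\D_i^*$), the pointwise equation at $x=0$ should read $\mu\psi_\mu+D^*\cdot A(0,\cdot)(\xi+D\psi_\mu)=0$ rather than with a minus sign; your bilinear form $B$ is nonetheless the correct one, so the argument is unaffected. Your weak-to-pointwise step can also be shortened: since all the operators involved are bounded on $L^2(\Omega)$, the residual is itself an $L^2(\Omega)$ element orthogonal to every test function, hence vanishes $\P$-a.s.\ without any translation argument; the stationary extension $\phi_\mu(x,\o)=\psi_\mu(\theta_x\o)$ then solves \eqref{eq:mod-corr-sto} at every $x$.
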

\noindent In order to proceed as in the periodic case and use a spectral approach, one needs to suitably define an elliptic operator
on $L^2(\Omega)$ (which is the stochastic counterpart to the space $H^1_\per(Q)$ of Section~\ref{sec:perio}).
Stationarity is crucial here. Following \cite{Papanicolaou-Varadhan-79}, we introduce differential operators on $L^2(\Omega)$: 
for all $u\in L^2(\Omega)$, we set
\begin{equation}\label{eq:disc-nabla-sto}
%\overline{\nabla} 
\D u(\omega):=\left[  
\begin{array}{l}
u(\theta_{\ee_1}\omega)-u(\omega) \\
\vdots\\
u(\theta_{\ee_d})-u(\omega)
\end{array}
\right],
\
%\overline{\nabla}^* 
\D^*u(\omega):=\left[  
\begin{array}{l}
u(\o)-u(\theta_{-\ee_1}\o) \\
\vdots\\
u(\o)-u(\theta_{-\ee_d}\o)
\end{array}
\right].
\end{equation} 
We are in position to define the stochastic counterpart to the operator of Definition~\ref{defi:decomp-spec-perio}.
\begin{defi}\label{defi:operator-sto}
Let $\Omega$, $\mathbb{P}$, $\{\theta_z\}_{z\in \Z^d}$, and $A$ be as in Definitions~\ref{defi:envi}, \ref{defi:stat}, and~\ref{defi:conduct}.
We define $\L:L^2(\Omega)\to L^2(\Omega)$ by
\begin{eqnarray*}
\L u(\omega)&=&-\D^*\cdot A(\omega) \D u(\omega) \\
%-\overline{\nabla}^*\cdot A(\omega) \overline{\nabla}u(\omega) \\
&=&\sum_{z \sim 0} \o_{0,z} (u(\o) -u(\theta_z \ \o))
\end{eqnarray*}
%
%where $\overline{\nabla}$ and $\overline{\nabla}^*$ are as in \eqref{eq:disc-nabla-sto}.
where $\D$ and $\D^*$ are as in \eqref{eq:disc-nabla-sto}.
\end{defi}
\noindent In probabilistic terms, the operator $-\L$ is the generator of the Markov process called the ``environment viewed by the particle''. This process is defined to be $(\theta_{X_t} \ \o)$, where $(X_t)$ is a random walk whose jump rate from $x$ to a neighbor $y$ is given by $\o_{x,y}$. 

\medskip
\noindent Using Definition~\ref{defi:operator-sto} and the stationarity of $\phi_\mu$, Lemma~\ref{lem:mod-corr}
implies that $\phi_\mu$ is the unique solution in $L^2(\Omega)$ to the equation
\begin{equation}\label{eq:mod-corr-Om}
(\mu+\L)\phi_\mu \,=\,\mf{d},
\end{equation}
where 
\begin{equation}\label{eq:loc-drift-sto}
%\mf{d}(\omega)\,:=\,\overline{\nabla}^*\cdot A(\omega)\xi.
\mf{d}(\omega)\,:=\,\D^*\cdot A(\omega)\xi.
\end{equation}
At the level of the corrector $\phi$ itself (which is not stationary), the weak form of \eqref{eq:mod-corr-Om} survives for $\mu=0$: for
every $\psi\in L^2(\Omega)$, we have
\begin{equation}\label{eq:mod-corr-Om-weak}
%\expec{\overline{\nabla}\psi \cdot A\overline{\nabla}\phi} \,=\,\expec{\overline{\nabla}\psi\cdot A\xi}.
\expec{\D\psi \cdot A\D\phi} \,=\,\expec{\D\psi\cdot A\xi}.
\end{equation}

\medskip
\noindent
For all $f\in L^2(\Omega)$, we let $\cE(f,f)$ be the Dirichlet form associated with $\L$, defined by
\begin{equation}
\label{defEff}
%\cE(f,f) = \la \L f \cdot f \ra = \la  \overline{\nabla} f \cdot A\overline{\nabla} f \ra = \frac{1}{2}  \sum_{z \sim 0} \left\la \o_{0,z} (f(\theta_z \ \o) - f(\o))^2 \right\ra.
\cE(f,f) = \la \L f \cdot f \ra = \la  \D f \cdot A\D f \ra = \frac{1}{2}  \sum_{z \sim 0} \left\la \o_{0,z} (f(\theta_z \ \o) - f(\o))^2 \right\ra.
\end{equation}
As in the periodic case, the homogenized diffusion matrix satisfies 
the identity
\begin{equation}
\label{Ahdirichlet}
\xi \cdot \Ah \xi = \la \xi \cdot A \xi \ra - \cE(\phi,\phi).
\end{equation}
The proof is formally the same as for \eqref{eq:hom-Dir}, provided 
we use the weak form \eqref{eq:mod-corr-Om-weak} of the corrector equation,
which holds for $\phi$ in place of $\psi$ (although $\phi$ is not stationary).

\medskip
\noindent We refer the reader to \cite{Kunnemann-83} for the proofs of the statements above.

\subsection{Spectral representation and approximations of the homogenized coefficients}

The operator $\L$ is bounded, positive, and self-adjoint on $L^2(\Omega)$. 
By the spectral theorem,
for any function $f \in L^2(\Omega)$, we can define the spectral measure $e_f$ of $\L$ projected on $f$, that is such that for any bounded continuous function $\Psi : \R_+ \to \R$, one has
$$
\la f \cdot \Psi(\L) f \ra = \int_{\R^+}\Psi(\lambda) \ \d e_f(\lambda).
$$
As in the periodic case, we can express the homogenized diffusion matrix in terms of the spectral measure projected on $\mf{d}$. 
\begin{lemma}\label{lem:spec-decomp}
Let $\Omega$, $\mathbb{P}$, $\{\theta_z\}_{z\in \Z^d}$, $A$, and $\L$ be as in Definitions~\ref{defi:envi}, \ref{defi:stat}, \ref{defi:conduct},
and~\ref{defi:operator-sto}. We let $A_\ho$ denote the associated homogenized diffusion matrix \eqref{eq:hom-coeff}, and $\mf{d}$
be the local drift \eqref{eq:loc-drift-sto}.
Then, the following identity holds
$$
\xi \cdot \Ah \xi = \la \xi \cdot A \xi \ra - \int_{\R^+}\frac{1}{\lambda} \ \d e_\mf{d}(\lambda),
$$
where $e_\mf{d}$ is the spectral measure of $\L$ projected on $\mf{d}$. 
\end{lemma}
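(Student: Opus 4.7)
The starting point is the identity \eqref{Ahdirichlet}, namely $\xi \cdot \Ah \xi = \la \xi \cdot A \xi \ra - \cE(\phi,\phi)$, which is already recorded in the excerpt. It therefore suffices to establish
\[
\cE(\phi,\phi) \,=\, \int_{\R^+} \frac{1}{\lambda}\,\d e_{\mf{d}}(\lambda).
\]
The naive route, copied from the periodic case, would be to write $\cE(\phi,\phi) = \la \L \phi \cdot \phi\ra$ and substitute $\phi = \L^{-1}\mf{d}$. This cannot be done directly here: $0$ lies at the edge of the spectrum of $\L$ on $L^2(\Omega)$, $\mf{d}$ need not belong to the range of $\L$, and, crucially, the corrector $\phi$ is not stationary and so does not live in $L^2(\Omega)$. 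The plan is therefore to argue on the regularized corrector $\phi_\mu = (\mu+\L)^{-1}\mf{d} \in L^2(\Omega)$ and then to pass to the limit $\mu \to 0$.

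First, applying the functional calculus for the bounded self-adjoint operator $\L$ on $L^2(\Omega)$ exactly as in the derivation of \eqref{eq:dir-spec-2}, I would obtain
\[
\cE(\phi_\mu,\phi_\mu) \,=\, \la \mf{d} \cdot \L(\mu+\L)^{-2}\mf{d}\ra \,=\, \int_{\R^+}\frac{\lambda}{(\mu+\lambda)^2}\,\d e_{\mf{d}}(\lambda).
\]
A direct computation shows that, for each fixed $\lambda>0$, the integrand is nondecreasing as $\mu \downarrow 0$ with pointwise limit $1/\lambda$, so monotone convergence yields
\[
\lim_{\mu\to 0} \cE(\phi_\mu,\phi_\mu) \,=\, \int_{\R^+}\frac{1}{\lambda}\,\d e_{\mf{d}}(\lambda) \,\in\, [0,+\infty].
\]

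It remains to identify this limit with $\cE(\phi,\phi) = \la \nabla\phi \cdot A\nabla\phi\ra$ (where $\nabla \phi$ is understood as the stationary random field of Lemma~\ref{lem:corr}). This is standard, but is the one delicate point of the argument: using uniform ellipticity together with the weak formulations of \eqref{eq:mod-corr-sto} and \eqref{eq:corr-sto} and the stationarity of $\nabla\phi$ and $\nabla\phi_\mu$, I would first verify that $\nabla \phi_\mu \to \nabla\phi$ strongly in $L^2(\Omega;\R^d)$ (this is essentially the construction of the corrector $\phi$ itself via the regularization procedure, as carried out in \cite{Kunnemann-83}); then $\cE(\phi_\mu,\phi_\mu)\to \cE(\phi,\phi)$ follows. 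Chaining the two limits with \eqref{Ahdirichlet} produces the announced formula.

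The main obstacle is exactly this last identification step: one cannot write "$\phi_\mu \to \phi$ in $L^2(\Omega)$" since $\phi$ is not an element of $L^2(\Omega)$, and the substitute, the $L^2$-convergence of gradients, implicitly relies on the fact that $\int_{\R^+}\lambda^{-1}\,\d e_{\mf{d}}(\lambda)$ is finite. Uniform ellipticity of $A$ guarantees this finiteness (via a standard spectral-gap-style bound $\cE(f,f)\ge \alpha\,\la |\D f|^2\ra$), which is precisely what makes both the limit of $\cE(\phi_\mu,\phi_\mu)$ and the right-hand side of the claimed identity simultaneously finite and equal.
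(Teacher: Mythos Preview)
Your proposal is correct and follows essentially the same route as the paper: reduce via \eqref{Ahdirichlet} to identifying $\cE(\phi,\phi)$, compute $\cE(\phi_\mu,\phi_\mu)$ spectrally, pass to the limit by monotone convergence, and match the limit with $\cE(\phi,\phi)$ via the strong $L^2$-convergence $\nabla\phi_\mu\to\nabla\phi$. The only cosmetic difference is that the paper cites \cite[Corollary~1 \& Remark~2]{Gloria-Otto-09b} (and alternatively Kipnis--Varadhan) for this last convergence, whereas you invoke the original construction in \cite{Kunnemann-83}.
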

\begin{proof}
In view of formula~\eqref{Ahdirichlet}, we need to show that
$$
\cE(\phi,\phi) = \int_{\R^+}\frac{1}{\lambda} \ \d e_\mf{d}(\lambda).
$$
This is either a consequence of Kipnis and Varadhan's arguments (see in particular \cite[Theorem~8.1]{Mourrat-10}),
or a consequence of \cite[Corollary~1 \& Remark~2]{Gloria-Otto-09b}.
We detail the second argument. \cite[Corollary~1 \& Remark~2]{Gloria-Otto-09b} imply that $\lim_{\mu\to 0}\nabla \phi_\mu = \nabla \phi$ strongly in $L^2(\Omega)$, hence
$$
\lim_{\mu\to 0} \cE(\phi_\mu,\phi_\mu) \,=\, \cE(\phi,\phi).
$$
Besides, for all $\mu>0$, we have by definition of the spectral decomposition 
$$
\cE(\phi_\mu,\phi_\mu) = \int_{\R^+}\frac{\lambda}{(\lambda+\mu)^2} \ \d e_f(\lambda),
$$
and the result follows by the monotone convergence theorem.
\end{proof}

\noindent From Lemma~\ref{lem:spec-decomp}, we deduce that the approximations $A_{\mu,k}$
introduced in Theorem~\ref{th:main-perio} and further characterized in Proposition~\ref{prop:new-fo-PDE}
may also be used in this discrete stochastic case, provided the notation $\expec{\cdot}$ is understood
as the expectation (instead of periodic average). 

\subsection{Suboptimal estimate of the systematic error}
We let $\mf{d}_{\mu,k}$, $P_k$, and $A_{\mu,k}$ be as in Section~\ref{sec:perio}.
In order to quantify the systematic error, we introduce, for any $D,q,k \ge 0$, the function $\mathrm{Err}_{D,q,k} : \R_+ \to \R$ defined by
$$
\mathrm{Err}_{D,q,k}(\mu) = 
\left|
\begin{array}{ll}
\mu^{2k} & \text{if } k < D/4, \\
\mu^{2k} \ln_+^{1+q}(\mu^{-1}) & \text{if } k = D/4, \\
\mu^{D/2} \ln_+^q(\mu^{-1}) & \text{if } k > D/4,
\end{array}
\right.
$$
where we write $\ln_+(x) = \max \{\ln x, 1\}$. The purpose of this section is to show the following theorem.
\begin{theo}\label{theo:main-sto}
Let $\Omega$, $\mathbb{P}$, $\{\theta_z\}_{z\in \Z^d}$, $A$, and $\L$ be as in Definitions~\ref{defi:envi}, \ref{defi:stat}, \ref{defi:conduct},
and~\ref{defi:operator-sto}, and $e_{\mf{d}}$ be as in Lemma~\ref{lem:spec-decomp}. 
We let $A_\ho$ denote the associated homogenized diffusion matrix \eqref{eq:hom-coeff}, and $A_{\mu,k}$
be the approximation \eqref{eq:th1-def-app-A} of $A_\ho$ 
for $\mu>0$, and $k\geq 1$.
Then, there exists $q \ge 0$ (depending on $\alpha$ and $\beta$) such that for all $\xi\in \R^d$ with $|\xi|=1$,
$$
0 \le \xi \cdot (A_{\mu,k} - \Ah) \xi \lesssim \left|
\begin{array}{ll}
\mathrm{Err}_{2,q,k}(\mu) & \text{if } d = 2, \\
\mathrm{Err}_{d,0,k}(\mu) & \text{if } 5\geq d>2, \\
\mathrm{Err}_{6,1,k}(\mu) & \text{if } d = 6, \\
\mathrm{Err}_{6,0,k}(\mu) & \text{if } 12\geq d>6, \\
\mathrm{Err}_{d-6,0,k}(\mu) & \text{if } d > 12.
\end{array}
\right.
$$
\end{theo}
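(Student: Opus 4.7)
My plan is to reduce the proof to a sharp polynomial bound on the edge of the spectral measure
$E(\mu):=e_{\mf{d}}((0,\mu))$. Combining \eqref{Ahdirichlet}, Lemma~\ref{lem:spec-decomp}, and the definition \eqref{eq:th1-def-app-A} of $A_{\mu,k}$, the argument of Theorem~\ref{th:main-perio} carries over \emph{mutatis mutandis} to the stochastic setting and yields the spectral identity
\begin{equation}\label{eq:plan-spectral}
\xi\cdot(A_{\mu,k}-A_\ho)\xi \,=\,\int_{\R^+}\frac{2^{k(k-1)}\mu^{2k}}{\lambda(\lambda+\mu)^2(\lambda+2\mu)^2\cdots(\lambda+2^{k-1}\mu)^2}\,\d e_{\mf{d}}(\lambda),
\end{equation}
from which the non-negativity of the error is automatic. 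Writing $K(\mu,\lambda)$ for the integrand in \eqref{eq:plan-spectral}, the bounds $(\lambda+2^j\mu)^2\ge \max(\lambda,2^j\mu)^2$ applied factor by factor yield the pointwise estimate
\begin{equation*}
K(\mu,\lambda) \,\lesssim\, \min\!\left(\frac{1}{\lambda},\,\frac{\mu^{2k}}{\lambda^{2k+1}}\right),
\end{equation*}
the two regimes matching at $\lambda\sim\mu$.

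I would then split \eqref{eq:plan-spectral} at $\lambda=\mu$, integrate by parts on each piece to replace $\d e_{\mf{d}}$ by the distribution function $E$, and carry out the resulting dyadic sum. Assuming a bound of the form $E(\lambda)\lesssim \lambda^{D/2+1}\ln_+^{q'}(\lambda^{-1})$ valid for small $\lambda$ (and using the trivial inequality $E(\lambda)\le \la \mf{d}^2 \ra$ for large $\lambda$), a direct computation produces
\begin{equation*}
0\,\le\,\xi\cdot(A_{\mu,k}-A_\ho)\xi \,\lesssim\, \mu^{2k}+\mu^{D/2}\ln_+^{q}(\mu^{-1}),
\end{equation*}
with an extra power of logarithm exactly at the critical case $k=D/4$, when the two terms coincide. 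Comparing $\mu^{2k}$ with $\mu^{D/2}$ according to the sign of $2k-D/2$ is precisely what $\mathrm{Err}_{D,q,k}(\mu)$ encodes.

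It remains to determine the correct exponent $D$ (and power $q$) in each dimensional regime. For $d>6$ I would feed in the edge-of-spectrum bound of \cite{Mourrat-10}, which gives $E(\mu)\lesssim \mu^{(d-4)/2}$ when $d>12$ and $E(\mu)\lesssim \mu^4$ (with a logarithmic factor at $d=6$) in the transitional range $6\le d\le 12$, corresponding respectively to $D=d-6$ and $D=6$. For $2\le d\le 6$ one reaches the sharper exponent $D=d$ by combining Theorem~\ref{theo:sto-2}, which converts a bound on the systematic error back into a bound on $E(\mu)$, with the Gloria--Otto type analysis \eqref{eq:intro-12a} adapted to the present discrete stochastic setting in Lemma~\ref{lem:app} and Appendix~\ref{append}; the logarithm at $d=2$ has the same origin as in \eqref{eq:intro-12a}.

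The hard part is thus not the spectral calculus on \eqref{eq:plan-spectral}, which is essentially automatic once $K(\mu,\lambda)$ has been controlled pointwise, but the extraction of the sharp edge-of-spectrum bound with the right exponent in each dimensional regime. The thresholds $d=2$, $d=6$, and $d=12$ in the statement originate entirely from that quantitative input, which blends Kipnis--Varadhan technology in high dimension with sensitivity estimates on the discrete Green's function in low dimension.
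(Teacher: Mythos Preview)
Your approach matches the paper's: derive the spectral identity, bound the kernel pointwise, integrate by parts against the spectral distribution function $E(\mu)$, and feed in sharp bounds on $E(\mu)$ dimension by dimension (this is exactly Theorems~\ref{theo:sto-2} and~\ref{theo:exponents}, proved via Lemma~\ref{lem:ipp}). Your sourcing of the spectral exponents is off in two places, however, and in one of them the argument as written would not close.

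For $6<d\le 12$, the bound $E(\mu)\lesssim \mu^4$ does \emph{not} come from \cite{Mourrat-10}: that reference only yields $\int_0^\mu \lambda^{-1}\,\d e_{\mf{d}}(\lambda)\lesssim \mu^{d/2-3}$, hence $E(\mu)\lesssim \mu^{d/2-2}$, which is strictly weaker than $\mu^4$ throughout that range. The $\mu^4$ bound comes instead from Lemma~\ref{lem:app}, through the elementary inequality $E(\mu)\le 36\,\mu^4\,\la(\mf{d}_{\mu,2})^2\ra$ together with $\la(\mf{d}_{\mu,2})^2\ra\lesssim 1$ for $d>6$. The paper invokes \cite{Mourrat-10} only for $d>12$.

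For $2\le d\le 5$, your route through part~(\ref{eq:item-thmsto2}) of Theorem~\ref{theo:sto-2} applied to \eqref{eq:intro-12a} does not reach the sharp exponent: since \eqref{eq:intro-12a} saturates at $\mu^2$ for $d\ge 4$, you would only get $E(\mu)\lesssim \mu^3$ for $d=4,5$, short of the required $\mu^{d/2+1}$ (and with a spurious extra logarithm at $d=4$). The correct input is again Lemma~\ref{lem:app} directly---it bounds $\la(\mf{d}_{\mu,2})^2\ra$, not the $k=1$ systematic error---combined with the same inequality $E(\mu)\lesssim \mu^4\,\la(\mf{d}_{\mu,2})^2\ra$. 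In short, Lemma~\ref{lem:app} is the universal input for all $d\le 12$, and the converse implication of Theorem~\ref{theo:sto-2} plays no role in the proof.
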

\noindent In order to prove Theorem~\ref{theo:main-sto}, we need to introduce some vocabulary. 
For all $\gamma > 1$ and $q \ge 0$, we say that the spectral exponents of a function $f \in L^2(\Omega)$ are at least $(\gamma,-q)$ if we have
$$
\int_0^\mu \d e_f(\lambda) \lesssim  \mu^\gamma \ln_+^{q}(\mu^{-1}).
$$
Note that, if $(\gamma',-q') \le (\gamma,-q)$ for the lexicographical order, and if the spectral exponents of $f$ are at least $(\gamma,-q)$, then they are at least $(\gamma',-q')$.
Hence, the phrasing is consistent.

\medskip
\noindent In order to prove Theorem~\ref{theo:main-sto}, we first express the systematic error in terms of the spectral exponents of $\mf{d}$. 
This is the object of Theorem~\ref{theo:sto-2}.
We then prove estimates on these exponents in Theorem~\ref{theo:exponents}, which concludes the proof of  Theorem~\ref{theo:main-sto}.
\begin{theo}
\label{theo:sto-2}
Within the notation and assumptions of Theorem~\ref{theo:main-sto}, the following two statements hold: 
for all $\xi \in \R^d$ with $|\xi|=1$,
\begin{enumerate}
\item
\label{eq:item-thmsto1}
If the spectral exponents of $\mf{d}$ are at least $(\gamma,-q)$, then 
$$
0 \le \xi \cdot (A_{\mu,k} - \Ah) \xi \,\lesssim \, \left|
\begin{array}{ll}
\mu^{2k} & \text{if } \gamma > 2k+1, \\
\mu^{2k} \ \ln_+^{1+q}(\mu^{-1}) & \text{if } \gamma = 2k+1, \\
\mu^{\gamma - 1} \ \ln_+^{q}(\mu^{-1}) & \text{if } \gamma < 2k+1.
\end{array}
\right.
$$
\item
\label{eq:item-thmsto2}
Conversely, 
$$
\xi \cdot (A_{\mu,k} - \Ah) \xi \,\gtrsim \,\mu^{2k} + \mu^{-1} \int_0^{\mu} \d e_\mf{d}(\lambda).
$$
\end{enumerate}
\end{theo}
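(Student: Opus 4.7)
The starting point in both directions is the explicit spectral representation
\[
\xi \cdot (A_{\mu,k} - \Ah)\xi \;=\; \int_{\R^+} K_{\mu,k}(\lambda) \,\d e_{\mf{d}}(\lambda), \qquad K_{\mu,k}(\lambda) := \frac{2^{k(k-1)}\,\mu^{2k}}{\lambda (\mu+\lambda)^2 (2\mu+\lambda)^2 \cdots (2^{k-1}\mu+\lambda)^2},
\]
which is obtained exactly as in the proof of Theorem~\ref{th:main-perio}, using the stochastic spectral decomposition of Lemma~\ref{lem:spec-decomp} together with (\ref{Ahdirichlet}). The whole argument consists in estimating this integral from above (for (1)) or from below (for (2)) by controlling $K_{\mu,k}$ on the two natural regions $\{\lambda \le \mu\}$ and $\{\lambda \ge \mu\}$, and then feeding the hypothesis on $F(\mu) := \int_0^\mu \d e_{\mf{d}}$.

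For part (\ref{eq:item-thmsto1}), I would use the two pointwise bounds
\[
K_{\mu,k}(\lambda) \lesssim \frac{1}{\lambda} \quad (\text{from } (\lambda + 2^i\mu)^2 \ge (2^i\mu)^2), \qquad K_{\mu,k}(\lambda) \lesssim \frac{\mu^{2k}}{\lambda^{2k+1}} \quad (\text{from } (\lambda + 2^i\mu)^2 \ge \lambda^2),
\]
the first being sharper on $(0,\mu]$ and the second on $[\mu,\infty)$. Writing
\[
\int_0^\mu \frac{\d e_{\mf{d}}(\lambda)}{\lambda} \;=\; \frac{F(\mu)}{\mu} + \int_0^\mu \frac{F(t)}{t^2}\,\d t
\]
(Fubini) and plugging in the hypothesis $F(t) \lesssim t^\gamma \ln_+^q(t^{-1})$ (which is integrable near $0$ since $\gamma > 1$) gives $\int_0^\mu \lambda^{-1}\,\d e_{\mf{d}} \lesssim \mu^{\gamma-1}\ln_+^q(\mu^{-1})$. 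For the second piece I would use a dyadic decomposition $[\mu,\infty) = \bigcup_{j\ge 0} [2^j\mu, 2^{j+1}\mu]$: each annulus contributes at most $\mu^{2k}(2^j\mu)^{-(2k+1)} F(2^{j+1}\mu)$, and summing (using $F(\cdot) \lesssim (\cdot)^\gamma \ln_+^q$ when the argument is $\le 1$ and $F \le \|\mf{d}\|^2$ otherwise) gives a geometric series that is summable when $\gamma > 2k+1$ (producing $\mu^{2k}$), logarithmic when $\gamma = 2k+1$ (producing $\mu^{2k}\ln_+^{q+1}(\mu^{-1})$), and dominated by its largest term when $\gamma < 2k+1$ (producing $\mu^{\gamma-1}\ln_+^q(\mu^{-1})$). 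Comparing the two contributions in each of the three regimes yields the three announced estimates.

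For part (\ref{eq:item-thmsto2}) the plan is to combine two pointwise lower bounds on disjoint regions. On $(0,\mu]$ every factor $\lambda + 2^i\mu$ lies in $[2^i\mu, 2^{i+1}\mu]$, so $K_{\mu,k}(\lambda) \ge 4^{-k}/\lambda \ge 4^{-k}/\mu$, giving $\int_0^\mu K_{\mu,k}\,\d e_{\mf{d}} \gtrsim \mu^{-1}F(\mu)$. To produce the $\mu^{2k}$ term I would fix $0 < \lambda_0 < \lambda_1$ (depending only on $d,\alpha,\beta$) such that $\int_{\lambda_0}^{\lambda_1} \d e_{\mf{d}} \ge c > 0$: this exists because $\L$ is a bounded operator on $L^2(\Omega)$, $\la \mf{d}, 1 \ra = 0$ so $e_{\mf{d}}$ carries no atom at the origin, and $\|\mf{d}\|^2 = \la\mf{d}^2\ra$ is bounded below by the product of $\xi$-data and the variance of $\nu$. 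For $\mu \le \lambda_0/2$ we then have $(\lambda + 2^i\mu) \le 2\lambda$ on $[\lambda_0,\lambda_1]$, hence $K_{\mu,k}(\lambda) \gtrsim \mu^{2k}/\lambda^{2k+1} \gtrsim \mu^{2k}$, and the second lower bound follows; the remaining range $\mu \ge \lambda_0/2$ is absorbed into the implicit constant. Adding the two contributions yields the claim. The main obstacle is the careful bookkeeping in the dyadic sum of part (\ref{eq:item-thmsto1}) to capture the correct logarithmic exponent in each regime, and, more minor, securing the uniform non-degeneracy $\int_{\lambda_0}^{\lambda_1} \d e_{\mf{d}} \ge c$ needed to extract the $\mu^{2k}$ lower bound in part (\ref{eq:item-thmsto2}).
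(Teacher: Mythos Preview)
Your proposal is correct and follows essentially the same strategy as the paper: write the systematic error via the kernel $K_{\mu,k}$, control it pointwise, and feed in the hypothesis on $F(\mu)=\int_0^\mu \d e_{\mf d}$ by an integration-by-parts/layer-cake argument. The paper's execution differs only in packaging. For part~(1), instead of splitting at $\lambda=\mu$ and running a dyadic sum on $[\mu,\infty)$, the paper uses the single bound $K_{\mu,k}(\lambda)\le \lambda^{-1}(\mu+\lambda)^{-2k}$ and one Fubini step on the whole half-line (this is isolated as a lemma), landing on the integral $\mu^{\gamma-1}\int_0^{1/\mu} u^{\gamma-2}(1+u)^{-2k}\ln^q((\mu u)^{-1})\,\d u$, which it then estimates case by case; in the case $\gamma>2k+1$ it further avoids the ``careful bookkeeping'' you flag by first replacing $(\gamma,-q)$ with $(\gamma',0)$ for some $2k+1<\gamma'<\gamma$, which kills the logarithm for free. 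For part~(2), the paper's argument is the same as yours but lighter: it simply fixes any $\delta>0$ with $\int_0^\delta \d e_{\mf d}>0$ and reads off $K_{\mu,k}(\delta)\gtrsim \mu^{2k}$, without attempting to make the constant uniform in the law $\nu$ (so your worry about securing $\int_{\lambda_0}^{\lambda_1}\d e_{\mf d}\ge c$ is not addressed there either).
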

\noindent This theorem extends \cite[Proposition~9.1]{Mourrat-10}. We begin by proving the following result.
\begin{lemma} \label{lem:ipp}
If the spectral exponents of $\mf{d}$ are at least $(\gamma,-q)$, then 
\begin{equation}
\label{eq:ipp}
0 \le \xi \cdot (A_{\mu,k} - \Ah) \xi \lesssim \mu^{2k} + \mu^{\gamma-1} \int_0^{\mu^{-1}} \frac{u^{\gamma-2}}{(1+u)^{2k}} \ln^{q}((\mu u)^{-1}) \ \d u.
\end{equation}
\end{lemma}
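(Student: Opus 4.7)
The plan is to start from the spectral representation of the systematic error derived in the proof of Theorem~\ref{th:main-perio}, which carries over to the discrete stochastic setting via Lemma~\ref{lem:spec-decomp}:
$$
\xi \cdot (A_{\mu,k} - A_\ho)\xi \,=\, \int_0^\infty \frac{2^{k(k-1)}\mu^{2k}}{\lambda \prod_{j=0}^{k-1}(\lambda + 2^j\mu)^2} \, \d e_{\mf{d}}(\lambda).
$$
Nonnegativity is then immediate. Since each factor $\lambda + 2^j \mu$ lies between $\lambda+\mu$ and $2^{k-1}(\lambda+\mu)$, the integrand is pointwise comparable (with constants depending only on $k$) to $\mu^{2k}/[\lambda(\lambda+\mu)^{2k}]$, so it suffices to estimate
$$
I(\mu) \,:=\, \int_0^\infty \frac{\mu^{2k}}{\lambda(\lambda+\mu)^{2k}} \, \d e_{\mf{d}}(\lambda).
$$

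Next, I would integrate by parts against the cumulative function $E(\lambda) := \int_0^\lambda \d e_{\mf{d}}(s)$. The boundary term at $\infty$ vanishes since the integrand decays and $E$ is globally bounded by $\|\mf{d}\|_{L^2}^2 \lesssim 1$; the boundary term at $0$ vanishes because the hypothesis $\gamma > 1$ forces $E(\lambda)/\lambda \to 0$. Differentiating $h(\lambda) := \mu^{2k}/[\lambda(\lambda+\mu)^{2k}]$ and using $\lambda \le \lambda + \mu$ to absorb the second summand into the first yields $|h'(\lambda)| \lesssim \mu^{2k}/[\lambda^2(\lambda+\mu)^{2k}]$, and therefore
$$
I(\mu) \,\lesssim\, \int_0^\infty E(\lambda) \cdot \frac{\mu^{2k}}{\lambda^2(\lambda+\mu)^{2k}} \, \d \lambda.
$$

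Finally, I would perform the rescaling $u = \lambda/\mu$ to rewrite the right-hand side as $\int_0^\infty E(\mu u)/[\mu u^2 (1+u)^{2k}] \, \d u$, and split at $u = \mu^{-1}$. For $u \le \mu^{-1}$ the spectral exponent hypothesis gives $E(\mu u) \lesssim (\mu u)^\gamma \ln^q((\mu u)^{-1})$, which after pulling out $\mu^{\gamma-1}$ produces exactly the integral appearing on the right-hand side of \eqref{eq:ipp}. For $u > \mu^{-1}$ I would use the trivial bound $E(\mu u) \lesssim 1$, so the tail is controlled by $\int_{\mu^{-1}}^\infty \mu^{-1} u^{-2k-2} \, \d u \sim \mu^{2k}$, accounting for the first term of \eqref{eq:ipp}.

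The only subtle point is the vanishing of the boundary term at $0$, which is guaranteed by the convention $\gamma > 1$ built into the definition of \emph{spectral exponents at least $(\gamma,-q)$}; everything else is a direct change of variables and a dichotomy between small and large frequencies. The passage to the actual exponents of Theorem~\ref{theo:main-sto} will then follow from Lemma~\ref{lem:ipp} by explicitly evaluating the right-hand side of \eqref{eq:ipp} in the three regimes $\gamma > 2k+1$, $\gamma = 2k+1$, $\gamma < 2k+1$.
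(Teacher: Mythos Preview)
Your proposal is correct and follows essentially the same route as the paper's proof: spectral representation, reduction to $\int \mu^{2k}/[\lambda(\mu+\lambda)^{2k}]\,\d e_{\mf{d}}$, an integration by parts against the cumulative spectral measure, a split at $\lambda=1$ (equivalently $u=\mu^{-1}$), and the change of variables $u=\lambda/\mu$. The only cosmetic difference is that the paper realizes the integration by parts via Fubini (writing the integrand as $\int_\lambda^\infty f'(\delta)\,\d\delta$ and swapping the order of integration), which sidesteps any discussion of boundary terms.
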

\begin{proof}[Proof of Lemma~\ref{lem:ipp}]
First, recall that
$$
\xi \cdot (A_{\mu,k} - \Ah)\xi = 2^{k(k-1)} \mu^{2k} \int_{\R^+}\frac{1}{\lambda (\mu+\lambda)^2 \cdots (2^{k-1} \mu + \lambda)^2} \ \d e_{\mf{d}}(\lambda) .
$$
The integral of the r.~h.~s. is non-negative and bounded by
\begin{equation}
\label{intmaj}
\int_{\R^+}\frac{1}{\lambda (\mu + \lambda)^{2k}} \ \d e_{\mf{d}}(\lambda).
\end{equation}
We perform a sort of integration by parts on this integral. 
To this aim, we let $f'(\lambda)$ be given by
$$
f'(\lambda) = - \frac{\partial}{\partial \lambda} \frac{1}{\lambda (\mu+\lambda)^{2k}} = \frac{(\mu+\lambda)^{2k-1} (\mu+(2k+1)\lambda)}{\lambda^2(\mu+\lambda)^{4k}}.
$$
We then rewrite the integral (\ref{intmaj}) in terms of $f'$, and use Fubini's theorem:
\begin{eqnarray*}
\int_{\R^+}\frac{1}{\lambda (\mu + \lambda)^{2k}} \ \d e_{\mf{d}}(\lambda) & = & \int_{\lambda = 0}^{+\infty} \int_{\delta = \lambda}^{+\infty} f'(\delta) \ \d \delta \ \d e_{\mf{d}}(\lambda) \\
& = & \int_{\delta = 0}^{+\infty} f'(\delta) \int_{\lambda = 0}^{\delta} \d e_{\mf{d}}(\lambda) \ \d \delta.
\end{eqnarray*}
We split this double integral in two parts, and treat the cases $\delta \in (1,+\infty)$
and $\delta\in (0,1]$ separately.
We begin with the case when $\delta$ ranges in $(1,+\infty)$.
We bound the inner integral 
$$
\int_{\lambda = 0}^{\delta} \d e_{\mf{d}}(\lambda) \,\leq \,\int_{\lambda = 0}^{\infty} \d e_{\mf{d}}(\lambda)\,=\,\expec{\mf{d}^2}\,\leq \, 4\beta^2\,\lesssim\, 1,
$$
by definition of the projection of the spectral measure on $\mf{d}$.
This yields for the first part of the double integral
$$
\int_{\delta = 1}^{+\infty} f'(\delta) \int_{\lambda = 0}^{\delta} \d e_{\mf{d}}(\lambda) \ \d \delta\,\lesssim\, \frac{1}{(\mu + 1)^{2k}} \,\lesssim\, 1.
$$
We now turn to the case when $\delta$ ranges in $(0,1]$.
The assumption on the spectral exponents of $\mf{d}$ implies
\begin{equation}
\label{intamaj}
\int_{\delta =0}^{1} f'(\delta) \int_{\lambda = 0}^{\delta} \d e_{\mf{d}}(\lambda) \ \d \delta \,\leq \,\int_{0}^{1} f'(\delta) \delta^\gamma \ln^{q}(\delta^{-1}) \ \d \delta.
\end{equation}
Noting that
$$
f'(\delta) \le (2k+1) \frac{1}{\delta^2 (\mu + \delta)^{2k}},
$$
we bound the r.~h.~s. of (\ref{intamaj}) by $(2k+1)$ times
$$
\int_{0}^{1} \frac{\delta^{\gamma-2}}{(\mu + \delta)^{2k}}  \ln^{q}(\delta^{-1}) \ \d \delta.
$$
A change of variables yields the announced result.
\end{proof}
\begin{proof}[Proof of part~(\ref{eq:item-thmsto1}) of Theorem~\ref{theo:sto-2}]
We first assume that $\gamma > 2k+1$. 
In that case, we let $\gamma'$ be such that $2k+1 < \gamma' < \gamma$. 
Since the spectral exponents of $\mf{d}$ are at least $(\gamma',0)$,  Lemma~\ref{lem:ipp} ensures that
$$
0 \le \xi \cdot (A_{\mu,k} - \Ah) \xi \,\lesssim\, \mu^{2k} + \mu^{\gamma'-1} \int_0^{\mu^{-1}} \frac{u^{\gamma'-2}}{(1+u)^{2k}} \ \d u \,\lesssim \, \mu^{2k}.
$$
We now turn to the case when $\gamma \le 2k+1$. 
We need to estimate the integral of the r.~h.~s. of (\ref{eq:ipp}). To this aim, we note that
$$
\ln^{q}((\mu u)^{-1}) = \big( \ln(\mu^{-1}) - \ln(u) \big)^{q} \le 2^{q}\big( \ln^{q}(\mu^{-1}) + \left|\ln(u)\right|^{q} \big),
$$
so that the integral in (\ref{eq:ipp}) may be estimated by
$$
\int_0^{\mu^{-1}} \frac{u^{\gamma-2}}{(1+u)^{2k}}\big( \ln^{q}(\mu^{-1}) + \left|\ln(u)\right|^{q} \big) \ \d u \,\lesssim\,
\left|
\begin{array}{ll}
\ln^{q+1}(\mu^{-1}) & \text{if } \gamma = 2k+1, \\
\ln^{q}(\mu^{-1}) & \text{if } \gamma < 2k+1,
\end{array}
\right.
$$
as desired.
\end{proof}
\begin{proof}[Proof of part~(\ref{eq:item-thmsto2}) of Theorem~\ref{theo:sto-2}]
Let $\delta > 0$ be such that
$$
\int_0^\delta \d e_\mf{d}(\lambda) > 0.
$$
By the non-negativity of the spectrum and of the integrand,
\begin{eqnarray*}
\xi \cdot (A_{\mu,k} - \Ah)\xi & = & \int_{\R^+}\frac{2^{k(k-1)} \mu^{2k}}{\lambda (\mu+\lambda)^2 \cdots (2^{k-1} \mu + \lambda)^2} \ \d e_{\mf{d}}(\lambda) \\
& \ge & \frac{2^{k(k-1)} \mu^{2k}}{\delta (\mu+\delta)^2 \cdots (2^{k-1} \mu + \delta)^2} \int_0^\delta\d e_\mf{d}(\lambda).
\end{eqnarray*}
Hence, 
$$
\xi \cdot (A_{\mu,k} - \Ah)\xi \,\gtrsim\, \mu^{2k}.
$$
In addition, there exists $C > 0$ such that for all $\lambda \in (0,\mu]$, one has
$$
\frac{\mu^{2k}}{\lambda (\mu+\lambda)^2 \cdots (2^{k-1} \mu + \lambda)^2} \ge \frac{C}{\mu}.
$$
Therefore, 
$$
\xi \cdot (A_{\mu,k} - \Ah)\xi \gtrsim \mu^{-1} \int_0^\mu \d e_\mf{d}(\lambda),
$$
which concludes the proof of the theorem.
\end{proof}
\noindent It remains to estimate the spectral exponents of $\mf{d}$.
\begin{theo}
\label{theo:exponents}
Within the notation and assumptions of Theorem~\ref{theo:main-sto},  there exists $q \ge 0$
depending only on the ellipticity constants $\alpha$ and $\beta$
 such that the spectral exponents of $\mf{d}$ are at least
$$
\left|
\begin{array}{ll}
(2,-q) & \text{if } d = 2,\\
(d/2 + 1,0) & \text{if } 5 \ge d > 2 ,\\
(4,-1) & \text{if } d = 6,\\
(4,0) & \text{if } 12 \ge d >6,\\
(d/2-2,0) & \text{if } d > 12.
\end{array}
\right.
$$
\end{theo}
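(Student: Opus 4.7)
The plan is to use part~(\ref{eq:item-thmsto2}) of Theorem~\ref{theo:sto-2} as a bridge between the spectral edge and the systematic error, namely
$$
\int_0^\mu \d e_\mf{d}(\lambda) \;\lesssim\; \mu\;\xi \cdot (A_{\mu,k} - A_\ho)\xi,
$$
and to feed into it sufficiently sharp bounds on the systematic error $A_{\mu,k}-A_\ho$, obtained by dimension-specific techniques.

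For $d=2$ and $d=3$, the case $k=1$ is enough: plugging the Gloria--Otto bounds \eqref{eq:intro-12a}, $|A_{\mu,1}-A_\ho|\lesssim \mu^2\ln_+^q\mu^{-1}$ for $d=2$ and $\lesssim \mu^{3/2}$ for $d=3$, directly yields $\int_0^\mu \d e_\mf{d}\lesssim \mu^3\ln_+^q\mu^{-1}$ and $\lesssim \mu^{5/2}$ respectively. These correspond to exponents $(3,-q)$ and $(5/2,0)$; since $(3,-q)$ dominates $(2,-q)$ in the lexicographical order, the first case gives the claim for $d=2$ a fortiori.

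For $4\le d\le 6$ the $k=1$ estimate of \eqref{eq:intro-12a} saturates at $\mu^2$ (with a spurious logarithm at $d=4$) and is no longer sharp enough. Here I would invoke Lemma~\ref{lem:app}, whose proof is deferred to Appendix~\ref{append}. In the spirit of \cite{Gloria-Otto-09b}, that lemma will provide direct bounds on quantities such as $\la \mf{d}_{\mu,k}^2\ra$ by combining energy estimates on the modified corrector equation with a variance inequality on $\Omega$ that exploits the i.i.d.\ structure of the conductances. The link with the spectral edge is then given by the elementary inequality
$$
\int_0^\mu \d e_\mf{d}(\lambda) \;\lesssim\; \mu^{2k}\,\la \mf{d}_{\mu,k}^2\ra,
$$
obtained by restricting the spectral representation of $\la \mf{d}_{\mu,k}^2\ra$ (cf.\ Proposition~\ref{prop:mfd-rec}) to the interval $(0,\mu)$. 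A suitable choice of $k$ together with the bounds of Lemma~\ref{lem:app} should produce the desired exponents $(d/2+1,0)$ for $d=4,5$ and $(4,-1)$ for $d=6$. For $d>6$ the energy-based approach saturates at $(4,0)$, and I would instead import the probabilistic estimates of \cite{Mourrat-10}, which directly bound $e_\mf{d}((0,\mu))$ via a mixing analysis of the environment viewed by the particle; these results deliver the plateau $(4,0)$ for $6<d\le 12$ and the improved value $(d/2-2,0)$ for $d>12$.

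The main technical obstacle is the proof of Lemma~\ref{lem:app} in dimensions $4 \le d \le 6$. A naive iteration of $(\mu+\L)^{-1}$ applied to the uniform bound $\la \phi_\mu^2\ra\lesssim 1$ generates a factor $\mu^{-1}$ per step via the spectral inequality $\la((\mu+\L)^{-1}f)^2\ra\le \mu^{-1}\la ((\mu+\L)^{-1/2}f)^2\ra$ and destroys the gain one hopes for. Getting past this requires propagating the ergodic cancellation built into $\mf{d}=\D^*\cdot A\xi$ through each application of the resolvent, and controlling the covariances of $\phi_\mu$ and of its iterates $\mf{d}_{\mu,k}$ by a variance estimate on the environment, in the manner of the systematic-error analysis of \cite{Gloria-Otto-09b}. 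Once these refined corrector estimates are in place, the exponents in the theorem follow by matching $k$ to the dimension in the two displayed inequalities above.
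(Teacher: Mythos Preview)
Your overall strategy is correct and essentially matches the paper's. Two small corrections:

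First, for $d=2$ you misquote \eqref{eq:intro-12a}: the bound there is $|A_{\mu,1}-A_\ho|\lesssim \mu\ln_+^q\mu^{-1}$, not $\mu^2\ln_+^q\mu^{-1}$. Combined with $\int_0^\mu \d e_\mf{d}\lesssim \mu\,\xi\cdot(A_{\mu,1}-A_\ho)\xi$ this gives $\mu^2\ln_+^q\mu^{-1}$, i.e.\ exactly the claimed exponents $(2,-q)$, not the overclaim $(3,-q)$.

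Second, for $6<d\le 12$ you misattribute the $(4,0)$ plateau. The results of \cite{Mourrat-10} you cite give only $(d/2-2,0)$, which is strictly weaker than $(4,0)$ in that range. The $(4,0)$ plateau is delivered by the energy-based estimate of Lemma~\ref{lem:app} (with $k=2$), which you yourself note ``saturates at $(4,0)$'' --- so you should keep it, not replace it. The paper uses Lemma~\ref{lem:app} for all $d\le 12$ and \cite{Mourrat-10} only for $d\ge 13$.

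One organizational difference: the paper does not split off $d=2,3$ via Theorem~\ref{theo:sto-2}(\ref{eq:item-thmsto2}) and \eqref{eq:intro-12a}; instead it uses the single inequality
$$
\int_0^\mu \d e_\mf{d}(\lambda)\;\le\;36\,\mu^4\,\la\mf{d}_{\mu,2}^2\ra
$$
together with Lemma~\ref{lem:app} uniformly for all $d\le 12$. Your detour through the systematic error for low $d$ is valid (since \eqref{eq:intro-12a} is proved independently in \cite{Gloria-Otto-09b}), but the paper's uniform treatment is cleaner and avoids importing a second external estimate.
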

\begin{rem} \label{rem:edge}
We conjecture that the spectral exponents of $\mf{d}$ are in fact $(d/2+1,0)$ for $d>2$. 
If true, this would imply that the systematic error is in fact bounded by $\mathrm{Err}_{d,0,k}(\mu)$ for any $d>2$ and $k$. 
\end{rem}
\noindent In order to prove Theorem~\ref{theo:exponents}, we will make use of the following result.
\begin{lemma}
\label{lem:app}
Within the notation and assumptions of Theorem~\ref{theo:main-sto}, there exists $q \ge 0$ depending only on 
the ellipticity constants $\alpha$ and $\beta$ such that
$$
\la (\mf{d}_{\mu,2})^2 \ra = \int_{\R^+}\frac{1}{(\mu + \lambda)^2(2\mu + \lambda)^2} \ \d e_\mf{d}(\lambda) \lesssim 
\left|
\begin{array}{ll}
\mu^{-2} \ln_+^q(\mu^{-1}) & \text{if } d = 2, \\
\mu^{d/2 - 3}  & \text{if } 5 \ge  d >2, \\
\ln_+(\mu) & \text{if } d = 6, \\
1 & \text{if } d >6,
\end{array}
\right.
$$
where $\mf{d}_{\mu,2}$ is as in Definition~\ref{def:mfd-mu-k}.
\end{lemma}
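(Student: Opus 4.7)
The plan is to exploit the identity $\mf{d}_{\mu,2} = (\mu+\L)^{-1}\phi_{2\mu}$ from Proposition~\ref{prop:mfd-rec} together with the relation $\mf{d}_{\mu,2} = \mu^{-1}(\phi_\mu - \phi_{2\mu})$ coming directly from Definition~\ref{def:mfd-mu-k}. The quantity to bound is therefore
\begin{equation*}
\la(\mf{d}_{\mu,2})^2\ra \,=\, \mu^{-2}\la(\phi_\mu - \phi_{2\mu})^2\ra,
\end{equation*}
which measures the $\mu$-sensitivity of the regularized corrector. Since the i.i.d.\ law $\P$ satisfies the spectral gap inequality, and since $\la \mf{d}_{\mu,2}\ra = 0$ (which one obtains by taking expectation of $(\mu+\L)\mf{d}_{\mu,2} = \phi_{2\mu}$ and using $\la\phi_{2\mu}\ra = 0$), the natural starting point is the Efron--Stein estimate
\begin{equation*}
\la(\mf{d}_{\mu,2})^2\ra \,\lesssim\, \sum_{e\in \mathbf{B}} \la(\partial_e \mf{d}_{\mu,2})^2\ra ,
\end{equation*}
where $\partial_e$ denotes the vertical derivative with respect to the conductance $\omega_e$.

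The key steps are then as follows. First, I would differentiate the defining equations $(\mu+\L)\phi_\mu = \mf{d}$ and $(2\mu+\L)\phi_{2\mu} = \mf{d}$ with respect to $\omega_e$ to express $\partial_e\phi_\mu$ and $\partial_e\phi_{2\mu}$ in terms of the Green's functions $G_\mu$ and $G_{2\mu}$ of $\mu+\L$ and $2\mu + \L$ on $\Z^d$ (the source terms being supported at the edge $e$ and involving $\nabla\phi_\mu$, $\nabla\phi_{2\mu}$ and $\xi$). Combining these two identities yields a telescoping representation of $\partial_e(\phi_\mu-\phi_{2\mu})$ as a sum of terms each containing an extra factor of $\mu$ or of a \emph{difference} of Green's functions, so that the gain of two powers of $\mu$ in $\mu^{-2}\la(\phi_\mu-\phi_{2\mu})^2\ra$ is compensated structurally. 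Second, I would plug these expressions into the Efron--Stein bound and reduce the estimation to weighted sums of moments of $\nabla G_\mu$, $\nabla G_{2\mu}$, and $\nabla\phi_\mu$, $\nabla\phi_{2\mu}$. Third, I would invoke the annealed Delmotte--Deuschel type estimates $\la|\nabla G_\mu(0,z)|^p\ra^{1/p}\lesssim (1+|z|)^{1-d}e^{-c\sqrt{\mu}|z|}$ used in \cite{Gloria-Otto-09b}, together with the uniform stationary bounds on $\la|\nabla \phi_\mu|^2\ra$, to evaluate the resulting discrete convolutions.

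The dimension-dependent estimates of the statement then fall out of elementary summation: the integrand decays like $(1+|z|)^{2(1-d)}$ up to scale $\mu^{-1/2}$, which is summable if and only if $d > 6$, logarithmic if $d=6$, and gives the stated $\mu^{d/2-3}$ behavior for $2<d\le 5$ via the exponential cutoff at $|z|\sim \mu^{-1/2}$. The case $d=2$ requires an extra logarithmic factor because of the $\ln\mu^{-1}$-growth of $\la\phi_\mu^2\ra$ in two dimensions, whence the exponent $q$ in the statement, as in \cite{Gloria-Otto-09,Gloria-Otto-09b}.

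The main obstacle is the second step: one must avoid the naive estimate in which the differences $\phi_\mu-\phi_{2\mu}$ and $G_\mu - G_{2\mu}$ are bounded separately (which would only recover the bound on $\la\phi_\mu^2\ra$ divided by $\mu^2$, giving nothing useful). Instead one must exploit that both differences carry an extra factor $\mu$ by virtue of the resolvent identity $(\mu+\L)^{-1} - (2\mu+\L)^{-1} = \mu(\mu+\L)^{-1}(2\mu+\L)^{-1}$, so that the Efron--Stein right-hand side gains the required four powers of $\mu$ over what one would get for $\la\phi_\mu^2\ra$. The careful bookkeeping of these cancellations is precisely the content of the Gloria--Otto quantitative machinery and is deferred to Appendix~\ref{append}.
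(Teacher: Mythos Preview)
Your plan is essentially the paper's own proof: write $\mf{d}_{\mu,2}=\mu^{-1}(\phi_\mu-\phi_{2\mu})$, use the spectral-gap/variance inequality to reduce to the sensitivities $\partial_e\mf{d}_{\mu,2}$, represent these via the Green's functions $G_\mu,G_{2\mu}$, and exploit the resolvent identity to trade the difference $\phi_\mu-\phi_{2\mu}$ for an additional application of $(\mu+\L)^{-1}$. This is exactly what the paper does in Appendix~\ref{append}, where the sensitivity formula (Step~1 there, quoting \cite[(3.10),(3.21)]{Gloria-Otto-09b}) already contains the convolution $\int_{\Z^d} G_T(0,w)\,\nabla_z G_T(z,w)\,\d w$ that encodes the extra resolvent.

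There is, however, a genuine slip in your final heuristic. You write that ``the integrand decays like $(1+|z|)^{2(1-d)}$ \dots which is summable if and only if $d>6$''. But $\sum_{z\in\Z^d}(1+|z|)^{2(1-d)}$ converges already for $d>2$; that sum is the one controlling $\expec{\phi_\mu^2}$, not $\expec{\mf{d}_{\mu,2}^2}$. After the resolvent identity, the dominant contribution to $\partial_e\mf{d}_{\mu,2}(0)$ is the \emph{iterated} object $\int G_\mu(0,w)\,\nabla G_{2\mu}(e,w)\,\d w$, and its square summed over $e$ is the triple integral
\[
\int_{\Z^d}\int_{\Z^d}\int_{\Z^d} g_T(|w|)\,g_T(|w'|)\,h_T(z-w)\,h_T(z-w')\,\d z\,\d w\,\d w',
\]
with $g_T(t)\sim t^{2-d}$ and $h_T$ carrying the $\nabla G$-decay. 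It is \emph{this} integral whose convergence threshold is $d>6$ (roughly, the effective integrand behaves like $|z|^{5-d}$ after the inner convolution), and the paper devotes Step~4 of Appendix~\ref{append} to its dyadic estimation. So your mechanism (the resolvent identity) is right, but the bookkeeping you sketch does not yet reflect it; if you carry it through you will recover the paper's triple-integral structure rather than a single sum of $|\nabla G|^2$.
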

\noindent Lemma~\ref{lem:app} is a consequence of the results of  \cite{Gloria-Otto-09b}.
Its proof, which is slightly technical, is deferred to Appendix~\ref{append}. 
\begin{proof}[Proof of Theorem~\ref{theo:exponents}]
For all $\lambda \le \mu$, one has
$$
\frac{\mu^4}{(\mu + \lambda)^2(2\mu + \lambda)^2} \ge \frac{1}{36}.
$$
Hence,
$$
\int_0^\mu \d e_\mf{d}(\lambda) \le 36 \mu^4 \int_{\R^+}\frac{1}{(\mu + \lambda)^2(2\mu + \lambda)^2} \ \d e_\mf{d}(\lambda).
$$
The announced bounds then follow from Lemma~\ref{lem:app} for $d \le 12$.

\medskip
\noindent For $d \ge 13$, we use instead \cite[Theorems 2.3 and 2.4]{Mourrat-10}, which ensure 
that there exist $C > 0$ such that for all $\mu>0$,
$$
\int_0^\mu \lambda^{-1} \d e_\mf{d}(\lambda) \le C \mu^{d/2 - 3}.
$$
This shows that the spectral exponents of $\mf{d}$ are at least $(d/2-2,0)$, since
$$
\int_0^\mu \d e_\mf{d}(\lambda) \le \mu \int_0^\mu \lambda^{-1} \d e_\mf{d}(\lambda).
$$
\end{proof}

\subsection{Complete error analysis}

As for the periodic case, $\phi_\mu$ can be accurately replaced by $\phi_{\mu,R}$, the solution 
of the modified corrector equation on a finite box $Q_R$ with homogeneous Dirichlet
boundary conditions. We refer the reader to \cite{Gloria-10} for details.

\medskip
\noindent In order to perform a complete error estimate, one still needs to estimate the variance term in the r.~h.~s. 
of the identity corresponding to \eqref{intro:error-estim}.
This is the object of the following theorem.
\begin{theo}\label{th:variance+}
Let $\Omega$, $\mathbb{P}$, $\{\theta_z\}_{z\in \Z^d}$, and $A$ be as in Definitions~\ref{defi:envi}, \ref{defi:stat}, and \ref{defi:conduct}.
We let $A_\ho$ denote the associated homogenized diffusion matrix \eqref{eq:hom-coeff}, and 
for all $k\geq 1$, $\mu>0$, and $L>0$, we define the approximation $A_{\mu,k,L}$ of $A_\ho$
as
\begin{eqnarray*}
\xi\cdot A_{\mu,k,L}\xi&:=&\langle\!\langle (\xi+\nabla \phi_{\mu})\cdot A (\xi+\nabla \phi_{\mu}) \rangle\!\rangle_L \nonumber \\
&& +\mu\sum_{i=0}^{k-1}\eta_{k,i}\langle\!\langle\phi_{2^i\mu}^2 \rangle\!\rangle_L+\mu\sum_{i=0}^{k-1}\sum_{j>i}^{k-1}\nu_{k,i,j}\langle\!\langle\phi_{2^i\mu}\phi_{2^j\mu}\rangle\!\rangle_L,
\end{eqnarray*}
where the coefficients $\eta_{k,i}$ and $\nu_{k,i,j}$ are as in Proposition~\ref{prop:new-fo-PDE}, the modified correctors $\phi_{2^i\mu}$ are
as in Lemma~\ref{lem:mod-corr},  and $\langle\!\langle \cdot \rangle\!\rangle_L $ denotes the spatial average
\begin{eqnarray*}
h\mapsto \langle\!\langle h \rangle\!\rangle_L\,:=\,\int_{\Z^d}h(x)\chi_L(x)\d x,
\end{eqnarray*}
where $x\mapsto \chi_L(x)$ is an averaging function on $(-L,L)^d$ such that $\int_{\Z^d}\chi_L(x) \d x=1$ and $\|\nabla \chi_L\|_{L^\infty} \lesssim L^{-d-1}$. 
Then, there exists an exponent $q>0$ depending only on $\alpha,\beta$ such that
\begin{equation*}
\var{A_{\mu,k,L}} \,\lesssim \,\left|
\begin{array}{rcl}
(L^{-2}+\mu^2)\ln_+^q \mu^{-1} & \text{if} & d=2,\\
L^{-d}+\mu^2 L^{-d+2} & \text{if} &d>2.
\end{array}
\right.
\end{equation*}
\end{theo}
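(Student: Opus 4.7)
The plan is to extend the variance estimate of \cite{Gloria-Otto-09} for $A_{\mu,1,L}$ (recalled in \eqref{eq:intro-13}) to all the terms appearing in the definition of $A_{\mu,k,L}$. By the triangle inequality in $L^2(\Omega,\P)$ and since $k$ is a fixed integer, it is enough to bound the variance of each of the finitely many summands in the definition: the ``energy'' term $\moy{(\xi+\nabla\phi_\mu)\cdot A(\xi+\nabla\phi_\mu)}$, and the quadratic terms $\mu\,\moy{\phi_{2^i\mu}\phi_{2^j\mu}}$ for $0\le i\le j\le k-1$. The energy term is precisely the one analyzed in \cite{Gloria-Otto-09}, whose variance matches the stated bound without the $\mu^2$ contribution; only the quadratic terms require a new argument.

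The main tool for the quadratic terms is the Efron-Stein (spectral gap) inequality for the product measure $\P$,
\[
\var{X}\,\lesssim\, \sum_{e\in \mb{B}} \expec{(\partial_e X)^2},
\]
where $\partial_e$ denotes the discrete vertical derivative with respect to the single conductance $\omega_e$. Differentiating the modified corrector equation satisfied by $\phi_{2^i\mu}$ yields the Green's function representation
\[
\partial_e \phi_{2^i\mu}(x)\,=\,\nabla G_{2^i\mu}(x,y_e)\cdot \bigl(\xi+\nabla\phi_{2^i\mu}\bigr)(y_e),
\]
(with obvious conventions), where $G_{2^i\mu}$ is the Green's function of $2^i\mu-\nabla^*\cdot A\nabla$ on $\Z^d$ and $y_e$ denotes an endpoint of $e$. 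Applying the Leibniz rule to $\phi_{2^i\mu}\phi_{2^j\mu}$, then Cauchy-Schwarz and summation over $e$, reduces the variance estimate to weighted $\ell^2$ integrals of $|\nabla G_{\mu'}|$ against moments of $\phi_{\mu'}$ and $\nabla\phi_{\mu'}$, for $\mu'\in\{2^i\mu,2^j\mu\}$.

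Combining the pointwise gradient estimates on $G_{\mu'}$ with exponential decay at scale $(\mu')^{-1/2}$ from \cite{Gloria-10} with the uniform $L^p(\Omega)$ moment bounds on $\nabla\phi_{\mu'}$ from \cite{Gloria-Otto-09} then reproduces the dimension-dependent scaling of the $k=1$ case: $L^{-2}\ln_+^q\mu^{-1}$ in $d=2$ and $L^{-d}$ in $d>2$. The extra prefactor $\mu^2$ in front of the quadratic terms, multiplied by the typical range $(\mu')^{-1}$ of $G_{\mu'}$ in the relevant $\ell^2$ norms and by the boundedness of $\expec{\phi_{\mu'}^2}$ in $d>2$ (up to $\ln_+^q\mu^{-1}$ factors in $d=2$), yields the additional $\mu^2 L^{-d+2}$ contribution when $d>2$ and the $\mu^2\ln_+^q\mu^{-1}$ contribution when $d=2$. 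The main obstacle is the uniform control, in $\mu'\in\{\mu,2\mu,\dots,2^{k-1}\mu\}$, of the $L^p(\Omega)$ moments of $\phi_{\mu'}$ itself (and not only of $\nabla\phi_{\mu'}$), which is available through \cite{Gloria-Otto-09} with logarithmic corrections in $d=2$ that are absorbed in the exponent~$q$. Summing over the finitely many pairs $(i,j)$ then concludes the proof.
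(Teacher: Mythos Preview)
Your proposal is correct and follows essentially the same route as the paper. The paper's own proof is a one-line citation: it observes that Theorem~\ref{th:variance+} is a direct consequence of \cite[Theorem~2.1 \& Remark~2.1]{Gloria-Otto-09} applied separately to each term of $A_{\mu,k,L}$ in the form \eqref{eq:new-fo-PDE}. In other words, the paper treats both the energy term and the quadratic terms $\mu\,\moy{\phi_{2^i\mu}\phi_{2^j\mu}}$ as already handled by the cited reference, whereas you sketch the underlying mechanism (Efron--Stein inequality, Green's function representation of $\partial_e\phi_{\mu'}$, moment bounds on $\phi_{\mu'}$ and $\nabla\phi_{\mu'}$), which is precisely the machinery of \cite{Gloria-Otto-09}. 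So your ``new argument'' for the quadratic terms is not actually new: it is exactly what Remark~2.1 of \cite{Gloria-Otto-09} provides, and the paper simply invokes it.
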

\noindent Theorem~\ref{th:variance+} is a direct consequence of \cite[Theorem~2.1 \& Remark~2.1]{Gloria-Otto-09} applied
to each term of $A_{\mu,k}$ in the form \eqref{eq:new-fo-PDE} of Proposition~\ref{prop:new-fo-PDE}.

\subsection{Polynomial decay of the variance along the semi-group}
We end this section with a short remark concerning some results of \cite{Mourrat-10}. Let $(S_t)_{t \ge 0}$ be the semi-group associated with the infinitesimal generator $-\L$ introduced in Definition~\ref{defi:operator-sto}. In \cite{Mourrat-10}, the asymptotic decay to $0$ of the variance of $S_t f$ is investigated. A slight modification of \cite[Theorem~2.4]{Mourrat-10} reads as follows.
\begin{theo}
\label{decay-spectrum}
Let $f \in L^2(\Omega)$ be such that $\langle f \rangle = 0$, and let $\gamma > 1$, $q \ge 0$. The following two statements are equivalent~:
\begin{enumerate}
\item
The spectral exponents of $f$ are at least $(\gamma,-q)$ ;
\item
$$
\left\langle (S_t f)^2 \right\rangle \lesssim t^{-\gamma} \ \ln_+^q(t) .
$$
\end{enumerate}
\end{theo}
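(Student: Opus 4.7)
The plan is to reduce the equivalence to a Laplace-transform / Tauberian-type statement for the spectral measure $e_f$. The key identity, coming from the spectral theorem applied to the bounded self-adjoint operator $\L$ and the semi-group $S_t = e^{-t\L}$, is
\begin{equation*}
\langle (S_t f)^2 \rangle \,=\, \langle f \cdot e^{-2t\L} f \rangle \,=\, \int_{\R^+} e^{-2t\lambda}\,\d e_f(\lambda).
\end{equation*}
Once this is in hand, the content of the theorem is purely an Abel--Tauberian equivalence between the small-$\mu$ behavior of $F(\mu) := e_f((0,\mu])$ and the large-$t$ behavior of its two-sided Laplace transform.

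For the direct implication (1) $\Rightarrow$ (2), I would integrate by parts. Since $F(0)=0$ and $F$ is bounded (by $\langle f^2\rangle$),
\begin{equation*}
\int_{\R^+} e^{-2t\lambda}\,\d F(\lambda) \,=\, 2t\int_0^{\infty} e^{-2t\lambda} F(\lambda)\,\d \lambda .
\end{equation*}
Splitting at $\lambda = 1$, the tail contribution is $O(e^{-2t})$ and the bulk is estimated by inserting the hypothesis $F(\lambda) \lesssim \lambda^\gamma \ln_+^q(\lambda^{-1})$. A change of variables $u = 2t\lambda$ and the elementary inequality $\ln_+^q(2t/u) \lesssim \ln_+^q(t) + \ln_+^q(u^{-1}) + \ln_+^q(u)$ (handling the decomposition of the log carefully, since both $u\ll 1$ and $u\gg 1$ appear in the range) converts the integral into $t^{-\gamma}\ln_+^q(t)$ times a convergent integral of $e^{-u}u^\gamma$ against polynomial-log factors. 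This yields (2).

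For the converse (2) $\Rightarrow$ (1), the argument is essentially trivial, which is what makes the equivalence clean: for any $\mu>0$ and $t>0$,
\begin{equation*}
F(\mu) \,=\, \int_0^{\mu}\d e_f(\lambda) \,\le\, e^{2t\mu}\int_0^{\mu} e^{-2t\lambda}\,\d e_f(\lambda) \,\le\, e^{2t\mu}\int_{\R^+} e^{-2t\lambda}\,\d e_f(\lambda).
\end{equation*}
Choosing $t = 1/\mu$ makes the prefactor $e^{2t\mu}=e^2$ an absolute constant, and the hypothesis bounds the remaining Laplace integral by $t^{-\gamma}\ln_+^q(t) \sim \mu^\gamma \ln_+^q(\mu^{-1})$, giving the desired bound on $F(\mu)$.

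I expect the only real (mild) obstacle to be the bookkeeping of the logarithmic factor in direction (1) $\Rightarrow$ (2): one must check that the change of variables does not produce unwanted factors of $\ln_+^q(\mu^{-1})$ with a larger exponent, and that the contribution from $u \in (0,1)$ (where $\ln_+^q(u^{-1})$ blows up) is still integrable against $e^{-u}u^{\gamma}$, which it is as soon as $\gamma > 0$. Since $\gamma > 1$ by assumption, no boundary issue arises. The remark about the minor modification of \cite[Theorem~2.4]{Mourrat-10} presumably refers to allowing the logarithmic correction $\ln_+^q$, which is handled precisely by this bookkeeping.
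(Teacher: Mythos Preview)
Your argument is correct. Note that the paper does not actually supply a proof of this theorem: it is stated as ``a slight modification of \cite[Theorem~2.4]{Mourrat-10}'' and immediately used to deduce the subsequent corollary. So there is no in-paper proof to compare against; your Abel--Tauberian argument via the spectral identity $\langle (S_t f)^2\rangle = \int e^{-2t\lambda}\,\d e_f(\lambda)$, integration by parts in one direction, and the Chebyshev-type choice $t=\mu^{-1}$ in the other, is precisely the standard route and is what the cited reference does (the ``slight modification'' being the inclusion of the logarithmic factor, which your bookkeeping handles correctly). One small point worth making explicit: the integration by parts requires $F(0^+)=e_f(\{0\})=0$, which follows from $\langle f\rangle=0$ together with ergodicity (the kernel of $\L$ consists of constants).
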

\noindent From Theorem~\ref{theo:exponents}, we thus obtain the following result, which strengthens \cite[Theorem~2.3 and Corollary~9.3]{Mourrat-10} when $4 \le d < 12$.
\begin{corollary}
Within the notation and assumptions of Theorem~\ref{theo:main-sto}, there exists $q \ge 0$
depending only on the ellipticity constants $\alpha$ and $\beta$ such that
$$
\left\langle (S_t \mf{d})^2\right\rangle \lesssim 
\left|
\begin{array}{ll}
t^{-2} \ \ln_+^q(t)  & \text{if } d = 2,\\
t^{-(d/2 + 1)} & \text{if } 5 \ge d > 2 ,\\
t^{-4} \ \ln_+(t)  & \text{if } d = 6,\\
t^{-4} & \text{if } 12 \ge d >6,\\
t^{-(d/2-2)} & \text{if } d > 12.
\end{array}
\right.
$$
\end{corollary}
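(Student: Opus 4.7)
The plan is a direct two-step reduction, combining the two theorems stated immediately before the corollary. The key observation is that the corollary is essentially the semi-group translation of the spectral bounds in Theorem~\ref{theo:exponents}, and the translation device is exactly Theorem~\ref{decay-spectrum}.

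First, I would check that $\mf{d}$ fulfills the hypotheses of Theorem~\ref{decay-spectrum}: $\mf{d} \in L^2(\Omega)$ since $|\mf{d}| \le 2\beta|\xi|$ pointwise by the ellipticity bound, and $\langle \mf{d} \rangle = 0$. The centering is the only point that requires a brief verification: from the definition \eqref{eq:loc-drift-sto} one may write
\begin{equation*}
\mf{d}(\omega) \,=\, \sum_{i=1}^d \xi_i \bigl( \omega_{0,\ee_i} - \omega_{-\ee_i,0} \bigr),
\end{equation*}
and each of the $d$ terms has vanishing expectation by stationarity of $\P$ under the shifts $\theta_{\ee_i}$.

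Next, I would apply the implication (1) $\Rightarrow$ (2) of Theorem~\ref{decay-spectrum} with $f = \mf{d}$, using the estimates from Theorem~\ref{theo:exponents} as the input $(\gamma,-q)$. Reading off the five dimensional regimes simply transcribes the conclusion:
\begin{itemize}
\item $d=2$: spectral exponents at least $(2,-q)$ yield $\langle(S_t\mf{d})^2\rangle \lesssim t^{-2}\ln_+^q(t)$;
\item $2 < d \le 5$: $(d/2+1,0)$ yields $t^{-(d/2+1)}$;
\item $d=6$: $(4,-1)$ yields $t^{-4}\ln_+(t)$;
\item $6 < d \le 12$: $(4,0)$ yields $t^{-4}$;
\item $d > 12$: $(d/2-2,0)$ yields $t^{-(d/2-2)}$,
\end{itemize}
which is exactly the stated five-case bound.

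Since both Theorem~\ref{decay-spectrum} and Theorem~\ref{theo:exponents} are already available, there is no genuine obstacle; the argument is pure bookkeeping, and no step merits detailed calculation. The mild subtlety one should not overlook is verifying the centering hypothesis $\langle \mf{d}\rangle = 0$ required to invoke Theorem~\ref{decay-spectrum}, which is settled by the stationarity argument above.
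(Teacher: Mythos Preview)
Your proposal is correct and matches the paper's approach exactly: the paper presents the corollary as an immediate consequence of Theorem~\ref{theo:exponents} via the equivalence in Theorem~\ref{decay-spectrum}, and you have simply spelled out the bookkeeping (including the harmless check that $\langle\mf{d}\rangle=0$). The only cosmetic slip is the pointwise bound $|\mf{d}|\le 2\beta|\xi|$, which should carry a dimensional factor (e.g.\ $2\beta\sqrt{d}\,|\xi|$), but this is irrelevant to the conclusion $\mf{d}\in L^2(\Omega)$.
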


\section*{Acknowledgements}

\noindent The authors acknowledge the support of INRIA, through the grant ``Action de
Recherche Collaborative'' DISCO.

\appendix

\section{Proof of Lemma~\ref{lem:app}}\label{append}

\noindent We adopt the notation of \cite{Gloria-Otto-09b}.
In particular, we set $T=\mu^{-1}$, denote by $G_T$ the Green's function associated
with the elliptic operator $T^{-1}-\nabla^* \cdot A\nabla$, $\phi_T$ the associated modified
corrector, and we set $\psi_T:=\mf{d}_{\mu,2}$. Note that $G_T$ and $\phi_T$ depend on the 
diffusion coefficients $A$.
The claim of the lemma is equivalent to 
\begin{equation}
\expec{\psi_T^2} \,\lesssim \,
\left|
\begin{array}{rcl}
T^2\ln^q T & \text{if} & d=2,\\
T^{3-d/2} & \text{if} & 5\geq d >2,\\
\ln T &\text{if} & d=6, \\
1 &\text{if} &d>6.
\end{array}
\right.
\end{equation}
Since $\expec{\psi_T}=0$, it holds that $\expec{\psi_T^2}=\var{\psi_T}$.
From the identity $\psi_T=T(\phi_T-\phi_{2T})$ we learn that $\psi_T$ depends continuously on the diffusion coefficients by 
\cite[Lemma~2.6]{Gloria-Otto-09} so that one may apply the variance estimate of \cite[Lemma~2.3]{Gloria-Otto-09}.
In particular, 
\begin{equation}
\var{\psi_T} \,\lesssim \, \sum_{e}\expec{\sup_{\omega_e}\left(\frac{\partial \psi_T(0)}{\partial \omega_e}\right)^2},
\end{equation}
where the sum runs over the edges of $\Z^d$.

\medskip
\noindent We proceed in four steps.

\medskip
\step{1} Proof of 
\begin{eqnarray}
\sup_{\omega_e}\left|\frac{\partial \psi_T(0)}{\partial \omega_e}\right|&\lesssim& \big(|\nabla \psi_T(z)|+\mu_d(T)(1+|\nabla \phi_{2T}(z)|)\big)G_T(0,e)\nonumber \\
&& +(1+|\nabla \phi_{2T}(z)|)\int_{\Z^d}G_T(0,w)G_T(e,w)\d w,\label{eq:ap-1}
\end{eqnarray}
where $e=(z,z+\ee_i)$, $G_T(0,e):=G_T(0,z+\ee_i)-G_T(0,z)$, $G_T(e,w)=G_T(z+\ee_i,w)-G_T(z,w)$, and $\mu_d(T)=\left|\begin{array}{rcl}\ln_+T &\text{if}&d=2,\\1 &\text{if}&d>2. \end{array} \right.$
Estimate~\eqref{eq:ap-1} is a direct consequence of \cite[(3.10) \& (3.21)]{Gloria-Otto-09b}, and \cite[(2.14) \& (2.16)]{Gloria-Otto-09}.

\medskip
\step{2} Proof of 
\begin{multline}
\int_{\Z^d} \expec{\big( |\nabla \psi_T(z)|^2+\mu_d(T)^2 (1+|\nabla \phi_{2T}(z)|^2  \big) G_T(0,e)^2}\d x\\
\lesssim 
\left|
\begin{array}{rcl}
T^2\ln^q T &\text{if} & d=2,\\
T &\text{if} & d=3,\\
\ln T &\text{if} &d=4,\\
1 &\text{if} &d>4,
\end{array}
\right.
\label{eq:ap-2}
\end{multline}
where $q$ only depends on the ellipticity constants $\alpha,\beta$.
To prove \eqref{eq:ap-2}, we first replace the gradient of the Green's function by the Green's function itself
and appeal to the deterministic optimal pointwise estimate of \cite[Lemma~4]{Gloria-Otto-09}:
\begin{equation*}
|G_T(x,e)| \,\leq\,G_T(x,z)+G_T(x,z+\ee_i) \,\lesssim\, \mu_d(T) (1+|x-z|)^{2-d} \min\{1,\sqrt{T}|x-z|^{-1}\}.
\end{equation*}
By stationarity, $\expec{|\nabla \psi_T(z)|^2}=\expec{|\nabla \psi_T(0)|^2}$, and $\expec{|\nabla \phi_{2T}(z)|^2} \leq 4\expec{|\phi_{2T}(0)|^2}$,
so that by \cite[Proposition~2.1]{Gloria-Otto-09} and \cite[(3.27) \& (3.29)]{Gloria-Otto-09b},
\begin{eqnarray*}
&& \int_{\Z^d} \expec{\big( |\nabla \psi_T(z)|^2+\mu_d(T)^2 (1+|\nabla \phi_{2T}(z)|^2  \big) G_T(0,e)^2}\d x\\
&\lesssim &(\expec{|\nabla \psi_T(z)|^2}+\mu_d(T)^2\expec{|\phi_{2T}(0)|^2})\int_{\Z^d} \mu_d(T)^2 (1+|z|)^{2(2-d)} \min\{1,\sqrt{T}^2|z|^{-2}\} \d z\\
&\lesssim & \left|
\begin{array}{rcl}
\mu_d(T)^q (T+1) T &\text{if} &d=2,\\
(\sqrt{T}+1)\sqrt{T} &\text{if} &d=3,\\
\ln T+1 &\text{if} &d=4,\\
1&\text{if} &d>4,
\end{array}
\right.
\end{eqnarray*}
which yields \eqref{eq:ap-2} for $T\gg 1$.

\medskip
\step{3} Proof of 
\begin{multline}
\int_{\Z^d} \expec{(1+|\nabla \phi_{2T}(z)|^2)\int_{\Z^d}\int_{\Z^d} G_T(0,w)G_T(0,w')|G_T(e,w)||G_T(e,w')|\d w\d w'}\d z \\
\lesssim 
\left|
\begin{array}{rcl}
T^2\ln^q T &\text{if} &d=2,\\
T^{3-d/2}&\text{if} &5\ge d >2, \\
\ln T &\text{if} &d=6,\\
1 &\text{if} &d>6.
\end{array}
\right.
\label{eq:ap-3}
\end{multline}
where $q$ only depends on the ellipticity constants $\alpha,\beta$.

\noindent We first estimate the Green's function using the deterministic pointwise estimate 
of \cite[Lemma~4]{Gloria-Otto-09}:
\begin{eqnarray}
&& \int_{\Z^d} \expec{(1+|\nabla \phi_{2T}(z)|^2)\int_{\Z^d}\int_{\Z^d} G_T(0,w)G_T(0,w')|G_T(e,w)||G_T(e,w')|\d w\d w'}\d z\nonumber \\
&\lesssim & \int_{\Z^d}\int_{\Z^d} \mu_d(T)^2 (1+|w|)^{2-d}(1+|w'|)^{2-d} \min\{1,\sqrt{T}|w|^{-1}\}^k\min\{1,\sqrt{T}|w'|^{-1}\}^k \nonumber \\
&& \qquad \qquad \qquad \times \int_{\Z^d}  \expec{(1+|\nabla \phi_{2T}(z)|^2)|G_T(e,w)||G_T(e,w')|}\d z\d w\d w' \label{eq:ap-7}
\end{eqnarray}
for some $k\geq 1$ to be fixed later ($k=5$ will be enough).
We then deal with the inner integral, and appeal to the Meyers' estimate of \cite[Lemma~2.9]{Gloria-Otto-09}
and the bounds of \cite[Proposition~2.1]{Gloria-Otto-09} on the moments of the modified correctors. We let
$p>2$ be the Meyers' exponent. By H\"older's inequality in probability with exponents $((p-2)/p,2/p)$, 
Cauchy-Schwarz' inequality, and stationarity of $\nabla G_T$, we have
\begin{eqnarray}
&&\int_{\Z^d}  \expec{(1+|\nabla \phi_{2T}(z)|^2)|G_T(e,w)||G_T(e,w')|}\d z \nonumber\\
&\lesssim & \int_{\Z^d} \expec{1+|\nabla \phi_{2T}(z)|^{2p/(p-2)}}\expec{|G_T(e,w)|^p}^{1/p}\expec{|G_T(e,w')|^p}^{1/p} \d z \nonumber\\
&\leq &\int_{\Z^d} \expec{1+|\nabla \phi_{2T}(z)|^{2p/(p-2)}}\expec{|\nabla_z G_T(z-w,0)|^p}^{1/p}\expec{|\nabla_z G_T(z-w',0)|^p}^{1/p} \d z \nonumber\\
&\lesssim & \mu_d(T)^q \int_{\Z^d} \expec{|\nabla_z G_T(z-w,0)|^p}^{1/p}\expec{|\nabla_z G_T(z-w',0)|^p}^{1/p} \d z.\label{eq:ap-6}
\end{eqnarray}
The combination of \eqref{eq:ap-7} \& \eqref{eq:ap-6} with \cite[Lemma~2.9]{Gloria-Otto-09}  yields
\begin{eqnarray*}
&&\int_{\Z^d} \expec{(1+|\nabla \phi_{2T}(z)|^2)\int_{\Z^d}\int_{\Z^d} G_T(0,w)G_T(0,w')|G_T(e,w)||G_T(e,w')|\d w\d w'}\d z \\
&\lesssim & \mu_d(T)^q \int_{\Z^d}\int_{\Z^d}\int_{\Z^d}g_T(|w|)g_T(|w'|)h_T(z-w)h_T(z-w')\d z\d w\d w',
\end{eqnarray*}
where $g_T(t)=(1+t)^{2-d} \min\{1,\sqrt{T}t^{-1}\}^k$, and $h_T$ is such that: for $R\sim 1$, 
\begin{equation*}
\int_{|x|\leq R}h_T(x)^2\,\lesssim\,1,
\end{equation*}
and for all $R\gg 1$ and all $j\geq 1$,
\begin{equation*}
\int_{2^jR \leq |x|<2^{j+1} R}h_T(x)^2\d x\,\lesssim\, (2^jR)^{d-2(d-1)} \min\{1,\sqrt{T}2^{-j}\}^{k}.
\end{equation*}
As we shall prove in the next step, this implies \eqref{eq:ap-7}.
Combined with Step~1 and Step~2, this proves the lemma.

\medskip
\step{4} Proof of 
\begin{multline}
\int_{\Z^d}\int_{\Z^d}\int_{\Z^d}g_T(|w|)g_T(|w'|)h_T(z-w)h_T(z-w')\d z\d w\d w' \\
\,\lesssim\,
\left|
\begin{array}{rcl}
T^2\ln T&\text{if}&d=2,\\
T^{3-d/2}&\text{if}&5\ge d>2, \\
\ln T &\text{if}&d=6,\\
1&\text{if}&d>6.
\end{array}
\right.
\label{eq:ap-8}
\end{multline}
The proof of \eqref{eq:ap-8} is made technical because the bounds on $h_T$ do hold integrated on dyadic annuli, and not pointwise.
In line with the bounds on $h_T$, we prove the claim by using a doubly dyadic decomposition of $\Z^d\times \Z^d$ 
combined with the results of \cite[Proof of Lemma~2.10, Steps~1,~2 \&~4]{Gloria-Otto-09}, that we
recall for the reader's convenience:
there exists $R\sim 1$ such that for all $i \in \N$,
\begin{eqnarray}\nonumber
\lefteqn{\int_{2^iR<|x|\leq 2^{i+1}R} \int_{|z|\leq |z-x|}h_T(z)h_T(z-x)\d z\d x}
\\
&\lesssim&
\left|
\begin{array}{lcl}
(2^iR)^2\max\{1,\ln(\sqrt{T}(2^iR)^{-1})\}&\text{if}&d=2,\\
(2^iR)^2 &\text{if}&d>2,
\end{array}
\right.  \label{eq:Lemma-2.10-2}
\end{eqnarray}
\begin{equation}
\int_{|x|\leq 4R} \int_{|z|\leq |z-x|}h_T(z)h_T(z-x)\d z\d x \,\lesssim \, \left|
\begin{array}{lcl}
\ln T &\text{if}&d=2,\\
1 &\text{if}&d>2.
\end{array}
\right. \label{eq:Lemma-2.10-1}
\end{equation}
We first use the symmetry with respect to $w$ and $w'$ to restrict the set of integration to $|w'|\geq |w|$, and
we make a change of variables
\begin{eqnarray*}
\lefteqn{\int_{\Z^d}\int_{\Z^d}\int_{\Z^d}g_T(|w|)g_T(|w'|)h_T(z-w)h_T(z-w')\d z\d w'\d w }\nonumber\\
&\leq & 2 \int_{w\in \Z^d}\int_{w'\in \Z^d, |w'|\geq |w|}\int_{\Z^d}g_T(|w|)g_T(|w'|)h_T(z-w)h_T(z-w')\d z\d w'\d w \\
&= &2\int_{w\in \Z^d}\int_{w-w'\in \Z^d,|w'|\geq |w|}\int_{\Z^d}g_T(|w|)g_T(|w'|)h_T(z)h_T(z-(w-w'))\d z\d w'\d w ,
\end{eqnarray*}
followed by the associated doubly dyadic decomposition of space
\begin{eqnarray}
\lefteqn{\int_{\Z^d}\int_{\Z^d}\int_{\Z^d}g_T(|w|)g_T(|w'|)h_T(z-w)h_T(z-w')\d z\d w'\d w }\nonumber\\
&\lesssim & \sum_{i\in \N} \sum_{j\in \N}  \int_{2^i R<|w|\leq 2^{i+1}R}\int_{\tiny \begin{array}{l}2^j R<|w-w'|\leq 2^{j+1}R\\ |w'|\geq |w|\end{array}}g_T(|w|)g_T(|w'|) \nonumber \\
&& \qquad  \qquad  \qquad  \qquad  \qquad  \qquad \times \int_{\Z^d}h_T(z)h_T(z-(w-w'))\d z\d w'\d w \label{eq:ap-9.1}\\
&&+ \sum_{j\in \N}  \int_{|w|\leq R}\int_{\tiny \begin{array}{l}2^j R<|w-w'|\leq 2^{j+1}R\\ |w'|\geq |w|\end{array}}g_T(|w|)g_T(|w'|)  \nonumber \\
&& \qquad  \qquad  \qquad  \qquad  \qquad  \qquad \times \int_{\Z^d}h_T(z)h_T(z-(w-w'))\d z\d w'\d w \label{eq:ap-9.2} \\
&&+ \sum_{i\in \N}  \int_{2^i R<|w|\leq 2^{i+1}R}\int_{\tiny \begin{array}{l}|w-w'|\leq R\\ |w'|\geq |w|\end{array}}g_T(|w|)g_T(|w'|)  \nonumber \\
&& \qquad  \qquad  \qquad  \qquad  \qquad  \qquad \times \int_{\Z^d}h_T(z)h_T(z-(w-w'))\d z\d w'\d w \label{eq:ap-9.3}\\
&&+\int_{|w|\leq R}\int_{\tiny \begin{array}{l}|w-w'|\leq R\\ |w'|\geq |w|\end{array}}g_T(|w|)g_T(|w'|) \nonumber \\
&& \qquad  \qquad  \qquad  \qquad  \qquad  \qquad \times \int_{\Z^d}h_T(z)h_T(z-(w-w'))\d z\d w'\d w, \label{eq:ap-9.4}
\end{eqnarray}
where $R\sim 1$ is as above.
We begin with the last term \eqref{eq:ap-9.4} of the sum, and appeal to \eqref{eq:Lemma-2.10-1} and the definition of $g_T$:
\begin{eqnarray}
&&\int_{|w|\leq R}\int_{\tiny \begin{array}{l}|w-w'|\leq R\\ |w'|\geq |w|\end{array}}g_T(|w|)g_T(|w'|) \int_{\Z^d}h_T(z)h_T(z-(w-w'))\d z\d w'\d w \nonumber \\
&\lesssim & \int_{|w-w'|\leq R}\int_{\Z^d}h_T(z)h_T(z-(w-w'))\d z\d w'\nonumber \\
&\lesssim &\left| 
\begin{array}{rcl}
\ln T &\text{if}&d=2,\\
1 &\text{if}&d>2.
\end{array}
\right. \label{eq:ap-9.4b}
\end{eqnarray}
We continue with \eqref{eq:ap-9.3}. Since $|w-w'|\leq R$, $g_T(|w'|)\sim g_T(|w|)$, and we have using \eqref{eq:Lemma-2.10-1}
and the definition of $g_T$:
\begin{eqnarray}
&&\sum_{i\in \N}  \int_{2^i R<|w|\leq 2^{i+1}R}\int_{\tiny \begin{array}{l}|w-w'|\leq R\\ |w'|\geq |w|\end{array}}g_T(|w|)g_T(|w'|)  
%\nonumber \\
%&& \qquad  \qquad  \qquad  \qquad  \qquad  \qquad \times 
\int_{\Z^d}h_T(z)h_T(z-(w-w'))\d z\d w'\d w \nonumber \\
&\lesssim & \left( \sum_{i\in \N}  \int_{2^i R<|w|\leq 2^{i+1}R}g_T(|w|)^2 \d w\right)\left( \int_{|w-w'|\leq R}\int_{\Z^d}h_T(z)h_T(z-(w-w'))\d z\d w'\right) \nonumber \\
&\lesssim &\left|
\begin{array}{rcl}
T\ln T&\text{if}&d=2,\\
\sqrt{T}&\text{if}&d=3,\\
\ln T &\text{if}&d=4,\\
1 &\text{if}&d>4.
\end{array}
\right. \label{eq:ap-9.3b}
\end{eqnarray}
For \eqref{eq:ap-9.2} we note that $|w|\leq R$ and $2^j R<|w-w'|\leq 2^{j+1}R$ imply that $|w'|\sim 2^j R$, and we
appeal to \eqref{eq:Lemma-2.10-2}:
\begin{eqnarray}
&&\sum_{j\in \N}  \int_{|w|\leq R}\int_{\tiny \begin{array}{l}2^j R<|w-w'|\leq 2^{j+1}R\\ |w'|\geq |w|\end{array}}g_T(|w|)g_T(|w'|) \int_{\Z^d}h_T(z)h_T(z-(w-w'))\d z\d w'\d w \nonumber \\
&\lesssim & g_T(0)R^d\sum_{j\in \N} g_T(2^jR)
%&& \qquad  \qquad  \qquad  \qquad  \qquad  \times
 \int_{2^j R<|w-w'|\leq 2^{j+1}R}\int_{\Z^d}h_T(z)h_T(z-(w-w'))\d z\d w'\nonumber \\
&\lesssim &\left|
\begin{array}{rcl}
T\ln T &\text{if}&d=2,\\
\sqrt{T}&\text{if}&d=3,\\
\ln T &\text{if}&d=4,\\
1 &\text{if}&d>4.
\end{array}
\right. \label{eq:ap-9.2b}
\end{eqnarray}
The dominant term is \eqref{eq:ap-9.1}.
We split the double sum into three parts according to the range of $i$ and $j$:
\begin{itemize}
\item the diagonal part: $|i-j|\leq 1$,
\item the off-diagonal parts: $i\geq j+2$ and $j\geq i+2$.
\end{itemize}
For $|i-j|\leq 1$, we use the inequality $|w|\leq |w'|\leq |w|+|w-w'|$ 
so that for the $i^{th}$ term of the sum, $|w'|\sim 2^iR$.
In particular, using \eqref{eq:Lemma-2.10-2}, this yields for the diagonal term
\begin{eqnarray}
&&\sum_{i\in \N} \sum_{|j-i|\leq 1}  \int_{2^i R<|w|\leq 2^{i+1}R}\int_{\tiny \begin{array}{l}2^j R<|w-w'|\leq 2^{j+1}R\\ |w'|\geq |w|\end{array}}g_T(|w|)g_T(|w'|) \nonumber \\
&& \qquad  \qquad  \qquad  \qquad  \qquad  \qquad \times \int_{\Z^d}h_T(z)h_T(z-(w-w'))\d z\d w'\d w \nonumber \\
&\lesssim & \sum_{i\in \N} (2^iR)^d g_T(2^iR)^2 \int_{2^{i-1} R<|w-w'|\leq 2^{i+2}R}\int_{\Z^d}h_T(z)h_T(z-(w-w'))\d z\d w'\nonumber \\
&\lesssim &\sum_{i\in \N} (2^iR)^{d+2(2-d)} \min\{1,\sqrt{T}(2^iR)^{-1}\}^5   (2^iR)^2\mu_d(T) \nonumber \\
&=&\mu_d(T)\sum_{i\in \N} (2^iR)^{6-d} \min\{1,\sqrt{T}(2^iR)^{-1}\}^5\nonumber \\
&\lesssim &\left|
\begin{array}{rcl}
{T}^2\ln T &\text{if}&d=2,\\
\sqrt{T}^3 &\text{if}&d=3,\\
{T}&\text{if}&d=4,\\
\sqrt{T}&\text{if}&d=5,\\
\ln T &\text{if}&d=6,\\
1 &\text{if}&d>6.
\end{array}
\right. \label{eq:ap-9.1-1}
\end{eqnarray}
We turn to the first off-diagonal term: those integers $i,j$ such that $i\geq j+2$.
In this case, we use the estimate $|w-w'|-|w|\leq |w'|\leq |w-w'|+|w|$, which shows that 
for the $(i,j)^{th}$ term of the sum, $|w'|\sim 2^iR$. In particular, using \eqref{eq:Lemma-2.10-2}, this yields for the
first off-diagonal term
\begin{eqnarray}
&&\sum_{i\in \N} \sum_{j\leq i-2}  \int_{2^i R<|w|\leq 2^{i+1}R}\int_{\tiny \begin{array}{l}2^j R<|w-w'|\leq 2^{j+1}R\\ |w'|\geq |w|\end{array}}g_T(|w|)g_T(|w'|) \nonumber \\
&& \qquad  \qquad  \qquad  \qquad  \qquad  \qquad \times \int_{\Z^d}h_T(z)h_T(z-(w-w'))\d z\d w'\d w \nonumber \\
&\lesssim & \sum_{i\in \N} (2^iR)^d g_T(2^iR)^2 \sum_{j\leq i-2}\int_{2^j R<|w-w'|\leq 2^{j+1}R}\int_{\Z^d}h_T(z)h_T(z-(w-w'))\d z\d w'\nonumber \\
&\lesssim &\sum_{i\in \N} (2^iR)^{d+2(2-d)} \min\{1,\sqrt{T}|2^iR|^{-1}\}^5   \sum_{j\leq i-2}(2^jR)^2\mu_d(T)\nonumber\\
&\lesssim &\sum_{i\in \N} (2^iR)^{d+2(2-d)} \min\{1,\sqrt{T}|2^iR|^{-1}\}^5   (2^iR)^2\mu_d(T) \nonumber\\
&\lesssim &\left|
\begin{array}{rcl}
{T}^2\ln T &\text{if}&d=2,\\
\sqrt{T}^3 &\text{if}&d=3,\\
{T}&\text{if}&d=4,\\
\sqrt{T}&\text{if}&d=5,\\
\ln T &\text{if}&d=6,\\
1 &\text{if}&d>6.
\end{array}
\right. \label{eq:ap-9.1-2}
\end{eqnarray}
We now treat the last term of the sum, that is those integers $i,j$ such that $j\geq i+2$.
Then, similarly to \eqref{eq:ap-9.1-2} we deduce that for $(i,j)^{th}$ term of the sum, $|w'|\sim 2^jR$. 
Hence, using \eqref{eq:Lemma-2.10-2}, we obtain
\begin{eqnarray}
&&\sum_{i\in \N} \sum_{j\geq i+2}  \int_{2^i R<|w|\leq 2^{i+1}R}\int_{\tiny \begin{array}{l}2^j R<|w-w'|\leq 2^{j+1}R\\ |w'|\geq |w|\end{array}}g_T(|w|)g_T(|w'|) \nonumber \\
&& \qquad  \qquad  \qquad  \qquad  \qquad  \qquad \times \int_{\Z^d}h_T(z)h_T(z-(w-w'))\d z\d w'\d w \nonumber \\
&\lesssim & \sum_{i\in \N} (2^iR)^d g_T(2^iR)\sum_{j\geq i+2}g_T(2^jR) \int_{2^j R<|w-w'|\leq 2^{j+1}R}\int_{\Z^d}h_T(z)h_T(z-(w-w'))\d z\d w'\nonumber \\
&=& \sum_{j\in \N} \left(g_T(2^jR) \int_{2^j R<|w-w'|\leq 2^{j+1}R}\int_{\Z^d}h_T(z)h_T(z-(w-w'))\d z\d w'\right)\sum_{i\leq j-2}(2^iR)^d g_T(2^iR) \nonumber \\
&\lesssim & \sum_{j\in \N} g_T(2^jR) (2^j R)^2\mu_d(T) \sum_{i\leq j-2}(2^iR)^d g_T(2^iR) \nonumber \\
&\lesssim &\left|
\begin{array}{rcl}
{T}^2\ln T &\text{if}&d=2,\\
\sqrt{T}^3 &\text{if}&d=3,\\
{T}&\text{if}&d=4,\\
\sqrt{T}&\text{if}&d=5,\\
\ln T &\text{if}&d=6,\\
1 &\text{if}&d>6.
\end{array}
\right. \label{eq:ap-9.1-3}
\end{eqnarray}
as for the first off-diagonal term.

\medskip
\noindent Estimate \eqref{eq:ap-8} then follows from the combination of \eqref{eq:ap-9.1}--\eqref{eq:ap-9.4} 
with \eqref{eq:ap-9.1-1}--\eqref{eq:ap-9.1-3} and \eqref{eq:ap-9.4b}--\eqref{eq:ap-9.2b}. 

\section{Numerical tests in the discrete periodic case}\label{append-perio}

Numerical tests of \cite{Gloria-09} have confirmed the sharpness of Theorem~\ref{th:error-estim-per} 
for the approximation $A_{\mu,1,R,L}$ on a discrete periodic example.
In the present work, we consider the same discrete example, and numerically check the asymptotic
convergence of $A_{\mu,2,R,L}$ to $A_\ho$.
As expected, the systematic error is reduced, and the limiting factor rapidly becomes the machine precision.
%
% check numerically the interest of Theorem~\ref{th:error-estim-per},
% we turn to the examples studied in \cite{Gloria-09}.
% In particular, we treat a discrete example --- which enables us to consider a large number of periodic cells, 
% as well as a continuous example --- for which the number of periodic cells does not exceed 50.
%
% \medskip
%
% \noindent 
% We start with the discrete case.
% 
The discrete corrector equation we consider is
\begin{equation*}
-\nabla^*\cdot A(\xi+\nabla \phi)=0 \qquad \text{ in }\Z^2,
\end{equation*}
where $\nabla$ and $\nabla^*$ are as in \eqref{eq:disc-nabla}, and 
\begin{equation*}
A(x):=\dig{\omega_{x,x+\ee_1},\omega_{x,x+\ee_2}}.
\end{equation*}
The matrix $A$ is $[0,4)^2$-periodic, and sketched on a periodic cell on Figure~\ref{fig:discret}. 
In the example considered, $\omega_{x,x+\ee_1}$ and $\omega_{x,x+\ee_2}$ represent 
the conductivities $1$ or $100$ of the horizontal edge $(x,x+\ee_1)$ and the vertical edge $(x,x+\ee_2)$ respectively,
according to the colors on Figure~\ref{fig:discret}.
The homogenization theory for such discrete elliptic operators is similar to the continuous case (see for instance \cite{Vogelius-91} 
in the two-dimensional case dealt with here). 
By symmetry arguments, the homogenized matrix associated with $A$ is a multiple of the identity.
It can be evaluated numerically (note that we do not make any other error than the machine precision). Its numerical value
is $A_\ho=26.240099009901\dots$.
\begin{figure}
\centering
\includegraphics[scale=.35]{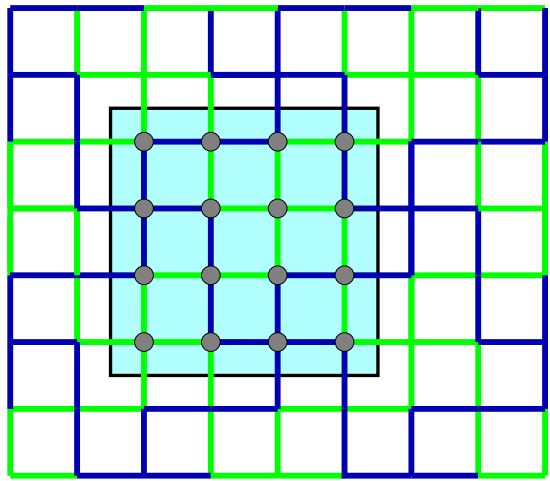}
\caption{Periodic cell in the discrete case}
\label{fig:discret}
\end{figure}
To illustrate Theorem~\ref{th:error-estim-per} in its discrete version (which is similar, see
\cite{Gloria-09} for related arguments), we have conducted a series of tests for $A_{\mu,2}$.
In particular, we have taken $\mu\sim R^{-3/2}$, $L=R/3$,
and a filter of infinite order.
In this case, the convergence rate is expected to be of order $3$ for $A_{\mu,1}$, and of order $6$ for $A_{\mu,2}$.
This is indeed the case, as can be seen on Figure~\ref{fig2}, where $R$ denotes the number of periodic cells and ranges from $6$ to
$400$ (that is $\log(R)$ up to $2.6$).
\begin{figure}
\centering
\psfrag{hh}{$\log R$}
\psfrag{kk}{$\log \text{Error}(\mu,R)$}
\includegraphics[scale=.45]{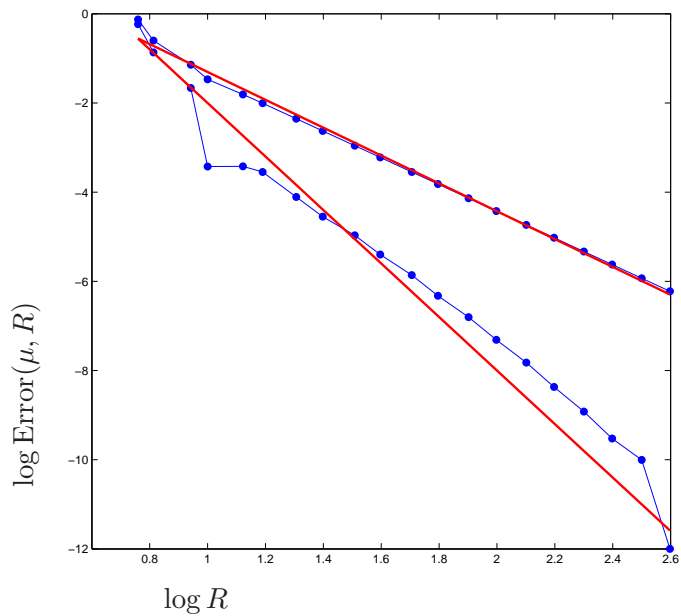}
\caption{Absolute error in log scale for $\mu=250 \,R^{-3/2}$, $A_{\mu,1,R,L}$ (slope $-3.1$) and $A_{\mu,2,R,L}$ (slope $-6$), filter of infinite order.}
\label{fig2}
\end{figure}

\end{document}